\numberwithin{equation}{section}
\newtheorem{prop}{Proposition}[section]
\newtheorem{lemma}[prop]{Lemma}
\newtheorem{theorem}[prop]{Theorem}
\theoremstyle{definition}
\newtheorem{defn}[prop]{Definition}
\newtheorem{example}[prop]{Example}
\newtheorem{assumption}[prop]{Assumption}
\newtheorem{remark}[prop]{Remark}
\begin{document}

%\department{Mathematics}
%\degreeyear{2011}
%\committee{Professor Steven Bradlow, Chair\\ Professor Sheldon Katz, Director of research\\ Associate Professor Tom Nevins, Director of reserach\\ Associate Professor Hal Schenk\\ }
\title{Weighted Euler characteristic of the moduli space of  higher rank Joyce-Song  pairs}
\author{Artan Sheshmani}
%\address{Department of Mathematics at University of Illinois at Urbana Champaign, 1409 W. Green Street, Urbana, IL, 61801}
%\email{sheshman@math.uiuc.edu}
%\date{\today}                                   
%\maketitle
%\mainmatter
\maketitle

\pagenumbering{arabic}
%\doublespacing
\begin{abstract}
The invariants of rank 2 Joyce-Song semistable pairs over a Calabi-Yau threefold were computed in \cite{a41}, using the wall-crossing formula of Joyce-Song \cite{a30} and Kontsevich-Soibelman \cite{a42}. Such wallcrossing computations often depend on the combinatorial properties of certain elements of a Hall-algebra (these are the stack functions defined by Joyce \cite{a56}). These combinatorial computations become immediately complicated and hard to carry out, when studying higher rank stable pairs with rank$>2$. The main purpose of this article is to introduce an independent approach to computation of rank 2 stable pair invariants, without applying the wallcrossing formula and rather by stratifying their corresponding moduli space and directly computing the weighted Euler characteristics of the strata. This approach may similarly be used to avoid complex combinatorial wallcrossing calculations in rank$>2$ cases. 
\end{abstract}
%\tableofcontents
%\mainmatter
\section{Introduction}
The Donaldson-Thomas theory (DT in short) of a Calabi--Yau threefold $X$ is defined in \cite{a24} and \cite{a20} via integration against the virtual fundamental class of the moduli space of ideal sheaves. In \cite{a18} and \cite{a17} Pandharipande and Thomas introduced objects given by pairs $(F,s)$ where $F$ is a pure sheaf with one dimensional support together with a fixed Hilbert polynomial and $s\in H^{0}(X,F)$ is given as a section of $F$. The authors computed the invariants of stable pairs, using deformation theory and virtual fundamental classes.

Following their work, Joyce and Song defined a similar notion of a (\textit{twisted}) stable pair, given by a sheaf $F$ and section map $s:\mathcal{O}(-n)\rightarrow F$ where $n\gg 0$ was chosen to be a sufficiently large integer so that the cohomology vanishing condition $H^{1}(F(n))=0$ is satisfied. These stable pairs were equipped with a stability condition rather different than the one used in \cite{a17}:

\begin{defn}\label{newstabl} 
(Joyce-Song pair stability) Given a coherent sheaf $F$ let $p_{F}$ denote the reduced Hilbert polynomial of $F$ with respect to the ample line bundle $\mathcal{O}_{X}(1)$. A pair $\phi:\mathcal{O}(-n)\rightarrow F$ is called stable if the following conditions are satisfied:
\begin{enumerate}
\item $p_{F'}\leq p_{F}$ for all proper subsheaves $F'$ of $F$ such that $F'\neq0$.
\item If $\phi$ factors through $F'$ ($F'$ a proper subsheaf of $F$), then $p_{F'}<p_{F}$.
\end{enumerate}
\end{defn}
In this article we refer to this stability as $\Hat{\tau}$-stability. For more on Joyce-Song stability look at \cite[Definition 12.2]{a30}.

One advantage in defining $\hat{\tau}$-stability, is that it enables one to compute the ``\textit{Generalized Donaldson-Thomas invariants}" with respect to the invariants of $\hat{\tau}$-stable pairs. The generalized Donaldson-Thomas invariants could not be calculated using the machinery developed by Thomas in \cite{a20}, since they were given by invariants of semistable sheaves (not just the stable ones!). After work of Joyce and Song the interesting question was to whether one is able to study and compute the invariants of objects composed of a sheaf $F$ and multiple sections given by the morphism $s_{1}\cdots s_{r}:\mathcal{O}^{\oplus r}(-n)\rightarrow F$ for $r>1$ (we will later denote these by rank $r$ stable pairs for short). 
In \cite{a39} the author introduced the notion of highly frozen triples (same as rank $r$ stable pairs), and used the virtual localization technique introduced by Graber-Pandharipande \cite{a25}, to compute their invariants over local Calabi-Yau threefolds (such as local $\mathbb{P}^1$). The objects studied in \cite{a39} (because of the stability condition chosen) were reminiscent of the higher rank analog of (a twisted version of) the Pandharipande-Thomas (PT in short) stable pairs \cite{a17}. In \cite{a41} the author studied the same higher rank objects, but equipped with $\hat{\tau}$-stability condition, and computed their invariants using the wallcrossing technique.

 In this article we would like to introduce a direct method of calculation of such invariants, which involves first stratifying the moduli space of higher rank semistable pairs into disjoint components, where each stratum contains the stable rank 1 pairs and then, computing the weighted Euler characteristic of the moduli space of higher rank pairs with respect to the Euler characteristics of the rank 1 strata. In doing so, we need to first define an auxiliary category $\mathcal{B}_{p}$ (which was originally introduced by Joyce-Song \cite[Section 13.1]{a30}). The objects in $\mathcal{B}_{p}$ are defined similar to the higher rank Joyce-Song pairs and they are classified based on their numerical class $(\beta,r)$. Here, $\beta$ denotes the Chern character of $F$ and $r$ denotes the number of sections of $F$ being considered in the construction. The definition of the category $\mathcal{B}_{p}$ allows one to define ``\textit{weak}" stability conditions  on $\mathcal{B}_{p}$ (look at Definition \ref{weak}).

As we have shown in \cite[Theorem 5.1]{a41}, the moduli stack of weak semistable objects (we denote this by $\tilde{\tau}$-semistable)  in $\mathcal{B}_{p}$ is closely related to the parameterizing moduli stack of higher rank $\hat{\tau}$-semistable pairs, which enables us to obtain the following identity:

\begin{equation}\label{N_B}\textbf{N}^{\beta,r}_{stp}(\hat{\tau})=(-1)^{r^2}\textbf{B}_{p}^{ss}(X,\beta, r, \tilde{\tau}),
\end{equation}
The left hand side of Equation \eqref{N_B} stands for invariants of $\hat{\tau}$-semistable pairs and the right hand side stands for invariants of $\tilde{\tau}$-semistable objects in $\mathcal{B}_p$ which are, roughly speaking, defined as the weighted Euler characteristic of their corresponding moduli stack. Therefore, using this identity, we aim at calculating the right hand side of Equation \eqref{N_B}, using the stratification method mentioned above.

We show in this article that the result of our calculation agrees with the results obtained in \cite{a41}. In particular, we restrict our computations to the rank 2 pairs ($r=2$), and very explicitly calculate their invariants in some examples. As we will see below, even though the computation of such invariants requires a detailed study of the strata involved in the moduli space, the advantage of the strategy used in here is that; it is much more geometric and it avoids complicated combinatorics involved in the method of wallcrossing.  Moreover,  we suspect that the methods introduced in this article may be used to prove the integrality conjectures for the partition functions of the higher rank Joyce-Song invariants in special cases. Toda in \cite{a37} has used a similar stratification technique and  provided an evidence of such integrality property, proposed by Kontsevich-Soibelman \cite[Conjecture 6]{a42}. 
\section{Acknowledgment}\label{wallcross}
I would like to thank Sheldon Katz for suggesting me the computation in Section \ref{compute-example}. Thanks toYukinobu Toda for many valuable discussions and for explaining to me his paper on rank$=2$ Donaldson-Thomas invariants. Thanks to Arend Bayer for kindly answering some of my questions related to this work in spring 2011. I am grateful to Richard Thomas for his help and support and for providing me the opportunity for being a member at Isaac Newton Institute for Mathematical sciences during 2010- 2011. I also thank Newton Institute for hospitality. I acknowledge partial support from NSF grants DMS 0244412, DMS 0555678 and DMS 08-38434 EMSW21-MCTP (R.E.G.S) during the time that the first drafts of project were being completed. I would also like to thank Kavli IPMU and MIT for their kind and wonderful hospitality. My work at IPMU was supported by the World Premier International
Research Center Initiative (WPI Initiative), MEXT, Japan.

\section*{The auxiliary category $\mathcal{B}_{p}$}\label{wallcross}
\begin{defn}\label{Ap} 
Let $X$ be a nonsingular projective Calabi--Yau threefold equipped with an ample line bundle $\mathcal{O}_{X}(1)$. Let $\tau$ denote the Gieseker stability condition on the abelian category of coherent sheaves on $X$. Define $\mathcal{A}_{p}$ to be the sub-category of coherent sheaves whose objects are zero sheaves and non-zero $\tau$-semistable sheaves with fixed reduced Hilbert polynomial $p$ \footnote {Look at \cite[Definition 13.1]{a30} for more detail}. 
\end{defn}
\begin{defn}\label{Bp}
Fix an integer $n$. Now define the category $\mathcal{B}_{p}$ to be the category whose objects are triples $(F,V,\phi)$, where $F\in Obj(\mathcal{A}_{p})$, $V$ is a finite \text{\text{dim}}ensional $\mathbb{C}$-vector space, and $\phi: V\rightarrow \operatorname{Hom}(\mathcal{O}_{X}(-n),F)$ is a $\mathbb{C}$-linear map. Given $(F,V,\phi)$ and $(F',V',\phi')$ in $\mathcal{B}_{p}$ define morphisms $(F,V,\phi)\rightarrow (F',V',\phi')$ in $\mathcal{B}_{p}$ to be pairs of morphisms $(f,g)$ where $f:F\rightarrow F'$ is a morphism in $\mathcal{A}_{p}$ and $g:V\rightarrow V'$ is a $\mathbb{C}$-linear map, such that the following diagram commutes:
\begin{equation*}
\begin{tikzpicture}
back line/.style={densely dotted}, 
cross line/.style={preaction={draw=white, -, 
line width=6pt}}] 
\matrix (m) [matrix of math nodes, 
row sep=2em, column sep=3.25em, 
text height=1.5ex, 
text depth=0.25ex]{ 
V&\operatorname{Hom}(\mathcal{O}_{X}(-n),F)\\
V'&\operatorname{Hom}(\mathcal{O}_{X}(-n),F')\\};
\path[->]
(m-1-1) edge node [above] {$\phi$} (m-1-2)
(m-1-1) edge node [left] {$g$} (m-2-1)
(m-1-2) edge node [right] {$f$}(m-2-2)
(m-2-1) edge node [above] {$\phi'$} (m-2-2);
\end{tikzpicture}
\end{equation*} 
\end{defn}
Our definition of the category $\mathcal{B}_{p}$ is compatible with that of \cite[Definition 13.1]{a30}. \\

Now we define the numerical class of objects in $\mathcal{B}_{p}$ based on \cite[Section 3.1]{a30}. 
\begin{defn}
Define the Grothendieck group $K(\mathcal{B}_{p})=K(\mathcal{A}_{p})\oplus \mathbb{Z}$ where $K(\mathcal{A}_{p})$ is given by the image of $K_{0}(\mathcal{A}_{p})$ in $K(\operatorname{Coh}(X)):=K^{num}(\operatorname{Coh}(X))$. Let  $\mathcal{C}(\mathcal{A}_{p})$ denote the positive cone of $\mathcal{A}_{p}$ defined as$$\mathcal{C}(\mathcal{A}_{p})=\{E\in K^{num}(\mathcal{A}_{p}):0\neq E\in \mathcal{A}_{p}\}.$$Now given $(F,V,\phi)\in \mathcal{B}_{p}$, we write $[(F,V,\phi)]=([F],\text{\text{dim}}(V))$ and define the positive cone of $\mathcal{B}_{p}$ by:
\begin{center}
$\mathcal{C}(\mathcal{B}_{p})=\{(\beta,d)| \beta\in \mathcal{C}(\mathcal{A}_{p})$ and $d\geq 0$ or $\beta=0$ and $d>0\},$
\end{center}
\end{defn}
We state the following results by Joyce and Song without proof:
\begin{lemma}
\cite[Lemma 13.2]{a30}. The category $\mathcal{B}_{p}$ is abelian and $\mathcal{B}_{p}$ satisfies the condition that if $[F]=0\in K(\mathcal{A}_{p})$ then $F\cong 0$. Moreover, $\mathcal{B}_{p}$ is noetherian and artinian and the moduli stacks $\mathfrak{M}^{(\beta,d)}_{\mathcal{B}_{p}}$ are of finite type for all $(\beta,d)\in C(\mathcal{B}_{p})$. 
\end{lemma}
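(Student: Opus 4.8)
The plan is to treat $\mathcal{B}_{p}$ as a comma-type category built on the pair $(\mathcal{A}_{p},\mathrm{Vect}_{\mathbb{C}})$ together with the additive functor $T:=\operatorname{Hom}(\mathcal{O}_{X}(-n),-)\colon \mathcal{A}_{p}\to \mathrm{Vect}_{\mathbb{C}}$, and to deduce every assertion from the corresponding property of $\mathcal{A}_{p}$. Here I use the standard facts that $\mathcal{A}_{p}$, the category of $\tau$-semistable sheaves of fixed reduced Hilbert polynomial $p$ (together with $0$), is abelian, of finite length, noetherian and artinian, with the $\tau$-stable sheaves as its simple objects; and that $T$ is left exact because $\operatorname{Hom}(\mathcal{O}_{X}(-n),-)$ is left exact in its second argument and a short exact sequence in $\mathcal{A}_{p}$ is one in $\operatorname{Coh}(X)$. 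An object of $\mathcal{B}_{p}$ is then a triple $(F,V,\phi)$ with $F\in \mathcal{A}_{p}$, $V$ finite dimensional and $\phi\colon V\to T(F)$.

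\textbf{Abelian structure.} Given a morphism $(f,g)\colon (F,V,\phi)\to (F',V',\phi')$, I would build its kernel and cokernel componentwise. For the kernel, left exactness of $T$ identifies $T(\ker f)$ with $\ker(T(f))$, and the commuting square forces $\phi(\ker g)\subseteq \ker(T(f))=T(\ker f)$; hence $(\ker f,\ker g,\phi|_{\ker g})$ is a kernel. For the cokernel, the composite $V\xrightarrow{g}V'\xrightarrow{\phi'}T(F')\to T(\operatorname{coker} f)$ equals $V\xrightarrow{\phi}T(F)\xrightarrow{T(f)}T(F')\to T(\operatorname{coker} f)$, which vanishes because $F\xrightarrow{f}F'\to \operatorname{coker} f$ is zero; so $\phi'$ descends to a map $\operatorname{coker} g\to T(\operatorname{coker} f)$ and $(\operatorname{coker} f,\operatorname{coker} g,\phi'')$ is a cokernel. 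With these, the natural comparison $\operatorname{coim}(f,g)\to \operatorname{im}(f,g)$ is, on the sheaf and vector-space components, exactly the comparison maps of $\mathcal{A}_{p}$ and $\mathrm{Vect}_{\mathbb{C}}$, hence isomorphisms; since a morphism of $\mathcal{B}_{p}$ whose two components are isomorphisms is invertible, the comparison is an isomorphism in $\mathcal{B}_{p}$, and so $\mathcal{B}_{p}$ is abelian.

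\textbf{The numerical condition and finiteness.} The condition $[F]=0\in \mathcal{K}^{num}(\mathcal{A}_{p})\Rightarrow F\cong 0$ follows since the Hilbert polynomial factors through $\mathcal{K}^{num}(\mathcal{A}_{p})$ and a nonzero sheaf in $\mathcal{A}_{p}$ has Hilbert polynomial a positive multiple of $p\neq 0$; thus $[F]=0$ forces $P_{F}=0$ and $F\cong 0$. For the noetherian and artinian properties I would exhibit $\mathcal{B}_{p}$ as a finite-length category: its simple objects are $(S,0,0)$ with $S$ a $\tau$-stable sheaf of reduced Hilbert polynomial $p$, together with the single extra object $(0,\mathbb{C},0)$; every $(F,V,\phi)$ then admits a finite filtration obtained by refining a Jordan--H\"older filtration of $F$ in $\mathcal{A}_{p}$ by a full flag of $V$, so that it has finite length equal to $\operatorname{length}_{\mathcal{A}_{p}}(F)+\dim_{\mathbb{C}}V$. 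Finite length yields both chain conditions at once.

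\textbf{Finite type of the moduli stacks.} Finally, fix $(\beta,d)\in C(\mathcal{B}_{p})$. Boundedness of $\tau$-semistable sheaves of class $\beta$ shows that the moduli stack $\mathfrak{M}^{\beta}_{\mathcal{A}_{p}}$ is of finite type, so it admits a finite-type atlas over which the sheaves $F$ fit in a bounded family; consequently $\dim_{\mathbb{C}}T(F)=\dim_{\mathbb{C}}\operatorname{Hom}(\mathcal{O}_{X}(-n),F)$ is bounded. The remaining datum $\phi\in \operatorname{Hom}(V,T(F))$ with $\dim_{\mathbb{C}}V=d$ fixed is then parametrized by a relative linear (Hom-scheme) construction of finite type over that atlas, whence $\mathfrak{M}^{(\beta,d)}_{\mathcal{B}_{p}}$ is of finite type. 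The main obstacle is the abelian-category verification, precisely the cokernel step and the resulting $\operatorname{coim}\to \operatorname{im}$ isomorphism, since $T$ is only left exact; once that is in place, the finite-type claim is a packaging of the (nontrivial but standard) boundedness theorem for semistable sheaves together with the linearity of the $\phi$-data.
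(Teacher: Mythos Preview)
The paper does not supply its own proof of this lemma: it is explicitly stated ``without proof'' and attributed to Joyce--Song \cite{a30} (Lemma 13.2). Your argument is correct and is essentially the standard one that Joyce--Song give: treat $\mathcal{B}_{p}$ as a comma category over the left-exact functor $T=\operatorname{Hom}(\mathcal{O}_{X}(-n),-)$, build kernels and cokernels componentwise, deduce finite length from that of $\mathcal{A}_{p}$ together with $\dim V<\infty$, and obtain finite type from boundedness of semistable sheaves plus the linear nature of the $\phi$-datum. The one point worth making explicit in your cokernel/image step is that left exactness of $T$ gives $\ker\bigl(T(F')\to T(\operatorname{coker} f)\bigr)=T(\operatorname{im} f)$, which is exactly what you need to see that the image object in $\mathcal{B}_{p}$ has sheaf component $\operatorname{im} f$; you use this implicitly but it is the crux of why the $\operatorname{coim}\to\operatorname{im}$ comparison reduces to the componentwise one.
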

\begin{remark}
The category $\mathcal{A}_{p}$ embeds as a full and faithful sub-category in $\mathcal{B}_{p}$ by $F\rightarrow (F,0,0)$. Moreover, it is shown in \cite[Equation (13.3)]{a30} that every object $(F,V,\phi)$ sits in a short exact sequence.
\begin{equation}\label{coreexact}
0\rightarrow (F,0,0)\rightarrow (F,V,\phi)\rightarrow (0,V,0)\rightarrow 0
\end{equation}
\end{remark}
Next we recall the definition of ``\textit{weak}" (semi)stability for a general abelian category $\mathcal{A}$. 
\begin{defn}\label{weakjoyce}
\footnote{For more detail on Definition \ref{weakjoyce} look at \cite[Definition 3.5]{a30}.} 
Let $\mathcal{A}$ be an abelian category. Let $K(\mathcal{A})$ be the quotient of $K_{0}(\mathcal{A})$ by some fixed group. Let $C(\mathcal{A})$ be the positive cone of $\mathcal{A}$. Suppose $(T,\leq)$ is a totally ordered set and $\tau: C(\mathcal{A})\rightarrow T$ a map. We call $(\tau,T,\leq)$ a stability condition on $\mathcal{A}$ if whenever $\alpha,\beta,\gamma\in C(\mathcal{A})$ with $\beta=\alpha+\gamma$ then either $$\tau(\alpha)<\tau(\beta)<\tau(\gamma)$$ or $$\tau(\alpha)>\tau(\beta)>\tau(\gamma)$$ or $\tau(\alpha)=\tau(\beta)=\tau(\gamma)$. We call $(\tau,T,\leq)$ a weak stability condition on $\mathcal{A}$ if whenever $\alpha,\beta,\gamma\in C(\mathcal{A})$ with $\beta=\alpha+\gamma$ then either $\tau(\alpha)\leq\tau(\beta)\leq\tau(\gamma)$ or $\tau(\alpha)\geq\tau(\beta)\geq\tau(\gamma)$. For such $(\tau,T,\leq)$, we say that a nonzero object $E$ in $\mathcal{A}$ is
\begin{enumerate}
\item $\tau$-semistable if $\forall S\subset E$ where $S\ncong 0$, we have $\tau([S])\leq \tau([E/S])$
\item $\tau$-stable if $\forall S\subset E$ where $S\ncong 0$, we have $\tau([S])< \tau([E/S])$
\item $\tau$-unstable if it is not $\tau$-semistable.
\end{enumerate}
\end{defn}
Now we apply the definition of weak stability conditions to the category $\mathcal{B}_{p}$: 
\begin{defn}\label{weak}
Define the weak stability condition $(\tilde{\tau},\tilde{T}, \leq)$ on $\mathcal{B}_{p}$  by $\tilde{T}=\{0, 1\}$ with the natural order $0<1$, and $\tilde{\tau}(\beta,d)=0$ if $d=0$ and $\tilde{\tau}(\beta,d)=1$ if $d>0$.
\end{defn}
Definition \ref{weak} is compatible with that of \cite[Definition. 13.5]{a30}.
\section*{Moduli stack of objects in $\mathcal{B}_{p}$}

Before constructing the moduli stack of objects in $\mathcal{B}_{p}$, we would like to provide a different description of objects in $\mathcal{B}_{p}$, as complexes in the derived category. This makes it easier to understand the strategy to construct their moduli spaces; By \cite[Lemma 13.2]{a30} there exists a natural embedding functor $\mathfrak{F}:\mathcal{B}_{p}\rightarrow D(X)$ which takes $(F,V,\phi_{V})\in \mathcal{B}_{p}$ to an object in the derived category given by $\cdots\rightarrow 0\rightarrow V\otimes \mathcal{O}_{X}(-n)\rightarrow F\rightarrow 0\rightarrow \cdots$ where $V\otimes \mathcal{O}_{X}(-n)$ and $F$ sit in degree $-1$ and $0$. Assume that $\operatorname{dim}(V)=r$. In that case $V\otimes \mathcal{O}_{X}(-n)\cong \mathcal{O}_{X}(-n)^{\oplus r}$. Therefore, one may view an object  $(F,V,\phi_{V})\in \mathcal{B}_{p}$ as an object in an abelian subcategory of the derived category, given as $[\mathcal{O}_{X}(-n)^{\oplus r}\rightarrow F]$. Now let us use this prespective and define the notion of flat families for objects in $\mathcal{B}_{p}$.
\begin{defn}\label{Bp-sflat}
Fix a parameterizing scheme of finite type $S$. Let $\pi_{X}:X\times S\rightarrow X$ and $\pi_{S}:X\times S\rightarrow S$ denote the natural projections. Use the natural embedding functor $\mathfrak{F}:\mathcal{B}_{p}\rightarrow D(X)$ in \cite[Lemma 13.2]{a30}. Define the $S$-flat family of objects in $\mathcal{B}_{p}$ of type $(\beta,r)$ as a complex $$\pi_{S}^{*}M\otimes \pi_{X}^{*}\mathcal{O}_{X}(-n)\xrightarrow{\psi_{S}} \mathcal{F}$$ sitting in degree $-1$ and $0$, such that $\mathcal{F}$ is given by an $S$-flat family of semistable sheaves with fixed reduced Hilbert polynomial $p$ with $\operatorname{ch}(F)=\beta$ and $M$ is a vector bundle of rank $r$ over $S$. A morphism between two such $S$-flat families is given by a morphism between the complexes $\pi_{S}^{*}M\otimes \pi_{X}^{*}\mathcal{O}_{X}(-n)\xrightarrow{\psi_{S}} \mathcal{F}$ and $\pi_{S}^{*}M'\otimes \pi_{X}^{*}\mathcal{O}_{X}(-n)\xrightarrow{\psi'_{S}} \mathcal{F}'$:
\begin{center}
\begin{tikzpicture}
back line/.style={densely dotted}, 
cross line/.style={preaction={draw=white, -, 
line width=6pt}}] 
\matrix (m) [matrix of math nodes, 
row sep=2em, column sep=3.5em, 
text height=1.5ex, 
text depth=0.25ex]{ 
\pi_{S}^{*}M\otimes \pi_{X}^{*}\mathcal{O}_{X}(-n)&\mathcal{F}\\
\pi_{S}^{*}M'\otimes \pi_{X}^{*}\mathcal{O}_{X}(-n)&\mathcal{F}'.\\};
\path[->]
(m-1-1) edge node [above] {$\psi_{S}$} (m-1-2)
(m-1-1) edge (m-2-1)
(m-1-2) edge (m-2-2)
(m-2-1) edge node [above] {$\psi'_{S}$} (m-2-2);
\end{tikzpicture}
\end{center}
Moreover an isomorphism between two such $S$-flat families in $\mathcal{B}_{p}$ is given by an isomorphism between the associated complexes $\pi_{S}^{*}M\otimes \pi_{X}^{*}\mathcal{O}_{X}(-n)\xrightarrow{\psi_{S}} \mathcal{F}$ and $\pi_{S}^{*}M'\otimes \pi_{X}^{*}\mathcal{O}_{X}(-n)\xrightarrow{\psi'_{S}} \mathcal{F}'$:
\begin{center}
\begin{tikzpicture}
back line/.style={densely dotted}, 
cross line/.style={preaction={draw=white, -, 
line width=6pt}}] 
\matrix (m) [matrix of math nodes, 
row sep=2em, column sep=3.5em, 
text height=1.5ex, 
text depth=0.25ex]{ 
\pi_{S}^{*}M\otimes \pi_{X}^{*}\mathcal{O}_{X}(-n)&\mathcal{F}\\
\pi_{S}^{*}M'\otimes \pi_{X}^{*}\mathcal{O}_{X}(-n)&\mathcal{F}'.\\};
\path[->]
(m-1-1) edge node [above] {$\psi_{S}$} (m-1-2)
(m-1-1) edge node [left] {$\cong$} (m-2-1)
(m-1-2) edge node [right] {$\cong$} (m-2-2)
(m-2-1) edge node [above] {$\psi'_{S}$} (m-2-2);
\end{tikzpicture}
\end{center}
From now on, by objects in $\mathcal{B}_{p}$ we mean the objects which lie in the image of the natural embedding functor $\mathfrak{F}:\mathcal{B}_{p}\rightarrow D(X)$. Moreover, by the $S$-flat family of objects in $\mathcal{B}_{p}$, their morphisms (or isomorphisms) we mean their corresponding definitions as stated in Definition \ref{Bp-sflat}. 
\end{defn}
Now we define the \textit{rigidified} objects in $\mathcal{B}_{p}$. These are \textbf{only} going to provide the means for construction of moduli stack of objects in $\mathcal{B}_{p}$ as a quotient stack.

\subsection{Rigidified objects and their realization in the derived category}
As stated in Definition \ref{Bp-sflat}, the objects in $\mathcal{B}_{p}$ are defined such that the sheaf sitting in degree $-1$ is given by a trivial vector bundle of rank $r$ isomorphic to $\mathcal{O}^{\oplus r}_{X}(-n)$. However we have not fixed any choice of such trivialization. Below we will define the closely related objects, which we denote by rigidified objects in $\mathcal{B}_{p}$, by fixing a choice of the trivialization of $\mathcal{O}^{\oplus r}_{X}(-n)$. These objects are essential for our construction, as their moduli stack forms a $GL_{r}(\mathbb{C})$-torsor over the (to be defined)  moduli stack of objects in $\mathcal{B}_{p}$. Therefore our plan is to essentially  construct the moduli stack of objects in $\mathcal{B}_{p}$ as the stacky quotient of the moduli stack of rigidified objects in $\mathcal{B}_{p}$, where the group we take the quotient with is $GL_{r}(\mathbb{C})$.

\begin{defn}\label{rigid-Bp}
Fix a positive integer $r$ and define the subcategory $\mathcal{B}_{p}^{\textbf{R}}\subset \mathcal{B}_{p}$ to be the category of ``rigidified" objects in $\mathcal{B}_{p}$ of rank $r$ whose objects are defined by tuples $(F,\mathbb{C}^{\oplus r},\rho)$ where $F$ is a coherent sheaf with reduced Hilbert polynomial $p$ and $\operatorname{ch}(F)=\beta$ and $\rho:\mathbb{C}^{r}\rightarrow \operatorname{Hom}(\mathcal{O}_{X}(-n),F)$. Given two rigidified objects of fixed type $(\beta,r)$ as $(F,\mathbb{C}^{\oplus r},\rho)$ and $(F',\mathbb{C}^{\oplus r},\rho')$ in $\mathcal{B}_{p}^{\textbf{R}}$, define morphisms $(F,\mathbb{C}^{\oplus r},\rho)\rightarrow (F',\mathbb{C}^{\oplus r},\rho')$ to be given by a morphism $f:F\rightarrow F'$ in $\mathcal{A}_{p}$ such that the following diagram commutes:
\begin{equation*}
\begin{tikzpicture}
back line/.style={densely dotted}, 
cross line/.style={preaction={draw=white, -, 
line width=6pt}}] 
\matrix (m) [matrix of math nodes, 
row sep=2em, column sep=3.25em, 
text height=1.5ex, 
text depth=0.25ex]{ 
\mathbb{C}^{\oplus r}&\operatorname{Hom}(\mathcal{O}_{X}(-n),F)\\
\mathbb{C}^{\oplus r}&\operatorname{Hom}(\mathcal{O}_{X}(-n),F').\\};
\path[->]
(m-1-1) edge node [above] {$\rho$} (m-1-2) 
(m-2-1) edge node [above] {$\rho'$} (m-2-2)
(m-1-2) edge node [right] {$f$}(m-2-2)
(m-1-1) edge node [left] {$\operatorname{id}$} (m-2-1);
\end{tikzpicture}
\end{equation*} 
\end{defn}
\begin{remark}\label{embed-func-BpR}
Similar to before, there exists a natural embedding functor $\mathfrak{F}^{\textbf{R}}:\mathcal{B}^{\textbf{R}}_{p}\rightarrow D(X)$ which takes $(F,\mathbb{C}^{\oplus r},\rho)\in \mathcal{B}^{\textbf{R}}_{p}$ to an object in the derived category given by $\cdots\rightarrow 0\rightarrow \mathbb{C}^{\oplus r}\otimes \mathcal{O}_{X}(-n)\rightarrow F\rightarrow 0\rightarrow \cdots$ where $\mathbb{C}^{\oplus r}\otimes \mathcal{O}_{X}(-n)$ sits in degree $-1$ and $F$ sits in degree $0$. One may view an object in $\mathcal{B}^{\textbf{R}}_{p}$ as a complex $\phi:\mathcal{O}^{\oplus r}_{X}(-n)\rightarrow F$ such that the choice of trivialization of $\mathcal{O}^{\oplus r}_{X}(-n)$ is fixed.
\end{remark}
\begin{defn}\label{Bp-R-sflat}
Fix a parametrizing scheme of finite type $S$. Use the natural embedding functor $\mathfrak{F}^{\textbf{R}}:\mathcal{B}^{\textbf{R}}_{p}\rightarrow D(X)$ in Remark \ref{embed-func-BpR}. An $S$-flat family of objects of type $(\beta,r)$ in $\mathcal{B}^{\textbf{R}}_{p}$ is given by a complex $$\pi_{S}^{*}\mathcal{O}_{S}^{\oplus r}\otimes \pi_{X}^{*}\mathcal{O}_{X}(-n)\xrightarrow{\psi_{S}} \mathcal{F}$$ sitting in degree $-1$ and $0$ such that $\mathcal{F}$ is given by an $S$-flat family of semistable sheaves with fixed reduced Hilbert polynomial $p$ with $\operatorname{ch}(\mathcal{F}_{s})=\beta$ for all $s\in S$. A morphism between two such $S$-flat families in $\mathcal{B}^{\textbf{R}}_{p}$ is given by a morphism between the complexes $\pi_{S}^{*}\mathcal{O}_{S}^{\oplus r}\otimes \pi_{X}^{*}\mathcal{O}_{X}(-n)\xrightarrow{\psi_{S}} \mathcal{F}$ and $\pi_{S}^{*}\mathcal{O}_{S}^{\oplus r}\otimes \pi_{X}^{*}\mathcal{O}_{X}(-n)\xrightarrow{\psi'_{S}} \mathcal{F}'$:
\begin{center}
\begin{tikzpicture}
back line/.style={densely dotted}, 
cross line/.style={preaction={draw=white, -, 
line width=6pt}}] 
\matrix (m) [matrix of math nodes, 
row sep=2em, column sep=3.5em, 
text height=1.5ex, 
text depth=0.25ex]{ 
\pi_{S}^{*}\mathcal{O}_{S}^{\oplus r}\otimes \pi_{X}^{*}\mathcal{O}_{X}(-n)&\mathcal{F}\\
\pi_{S}^{*}\mathcal{O}_{S}^{\oplus r}\otimes \pi_{X}^{*}\mathcal{O}_{X}(-n)&\mathcal{F}'.\\};
\path[->]
(m-1-1) edge node [above] {$\psi_{S}$} (m-1-2)
(m-1-1) edge node [left] {$\operatorname{id}_{\mathcal{O}_{X\times S}}$}(m-2-1)
(m-1-2) edge (m-2-2)
(m-2-1) edge node [above] {$\psi'_{S}$} (m-2-2);
\end{tikzpicture}
\end{center}
Moreover an isomorphism between two such $S$-flat families in $\mathcal{B}^{\textbf{R}}_{p}$ is given by an isomorphism between the associated complexes $\pi_{S}^{*}\mathcal{O}_{S}^{\oplus r}\otimes \pi_{X}^{*}\mathcal{O}_{X}(-n)\xrightarrow{\psi_{S}} \mathcal{F}$ and $\pi_{S}^{*}\mathcal{O}_{S}^{\oplus r}\otimes \pi_{X}^{*}\mathcal{O}_{X}(-n)\xrightarrow{\psi'_{S}} \mathcal{F}'$:
\begin{center}
\begin{tikzpicture}
back line/.style={densely dotted}, 
cross line/.style={preaction={draw=white, -, 
line width=6pt}}] 
\matrix (m) [matrix of math nodes,  
row sep=2em, column sep=3.5em, 
text height=1.5ex, 
text depth=0.25ex]{ 
\pi_{S}^{*}\mathcal{O}_{S}^{\oplus r}\otimes \pi_{X}^{*}\mathcal{O}_{X}(-n)&\mathcal{F}\\
\pi_{S}^{*}\mathcal{O}_{S}^{\oplus r}\otimes \pi_{X}^{*}\mathcal{O}_{X}(-n)&\mathcal{F}'.\\};
\path[->]
(m-1-1) edge node [above] {$\psi_{S}$} (m-1-2)
(m-1-1) edge node [left] {$\operatorname{id}_{\mathcal{O}_{X\times S}}$} (m-2-1)
(m-1-2) edge node [right] {$\cong$} (m-2-2)
(m-2-1) edge node [above] {$\psi'_{S}$} (m-2-2);
\end{tikzpicture}
\end{center}
Similar to the way that we treated objects in $\mathcal{B}_{p}$, from now on by objects in $\mathcal{B}^{\textbf{R}}_{p}$ we mean the objects which lie in the image of the natural embedding functor $\mathfrak{F}^{\textbf{R}}:\mathcal{B}^{\textbf{R}}_{p}\rightarrow D(X)$ in Remark \ref{embed-func-BpR}. Moreover by the $S$-flat family of objects in $\mathcal{B}^{\textbf{R}}_{p}$, their morphisms (or isomorphisms) we mean the corresponding definitions as stated in Definition \ref{Bp-R-sflat}. 
\end{defn}
\textbf{Notation}: 
In what follows we define $\mathfrak{M}^{(\beta,r)}_{\mathcal{B}_{p}}$ ($\mathfrak{M}^{(\beta,r)}_{\mathcal{B^{\textbf{R}}}_{p}}$ respectively) to be the moduli functors from $Sch/\mathbb{C}\rightarrow \operatorname{Groupoids}$ which send a $\mathbb{C}$-scheme $S$ to the groupoid of $S$-flat family of objects of type $(\beta,r)$ in $\mathcal{B}_{p}$ ($\mathcal{B}^{\textbf{R}}_{p}$ respectively). \\

We will show that these moduli functors (as groupoid valued functors) are equivalent to algebraic quotient stacks. We will also show that the moduli stack $\mathfrak{M}^{(\beta,r)}_{\mathcal{B}_{p}}$ is given by a stacky quotient of $\mathfrak{M}^{(\beta,r)}_{\mathcal{B}^{\textbf{R}}_{p}}$ by $GL_{r}(\mathbb{C})$. 

\subsection{The underlying parameter scheme}\label{scheme-beta}

According to Definition \ref{Bp}, an object in the category $\mathcal{A}_{p}$ consists of semistable sheaves with fixed reduced Hilbert polynomial $p$. Note that having fixed a polynomial (in variable $t$) $p(t)$ as the reduced Hilbert polynomial of $F$ means that the Hilbert polynomial of $F$ can yet be chosen as $P_{F}(t)= \frac{k}{d'!} \cdot p(t)$ for different values of $k$  where $d'$ is the dimension of $F$. However here we make an assumption that there are only finitely many possible values $k = 0,1,\cdots, N$ for which our computation makes sense. We explain the motivation behind this assumption further below;

Our analysis inherits this finiteness property directly from applying \cite[Proposition 13.7]{a30} where the authors show that there are only finitely many nontrivial contributions to their wallcrossing computation which are induced by objects, whose underlying sheaves could only have finitely many fixed Hilbert polynomials. In other words, according to  \cite[Proposition 13.7]{a30}, it suffices to consider Hilbert polynomials $P_{F}(t)= \frac{k}{d'!} \cdot p(t)$ induced by $p$ and only finitely many values of $k=1,2,\cdots, N$ for some $N>0$. Similary for us there would only be finitely many such $k$'s for which \eqref{N_B} holds true, which justifies the reason behind our assumption.

On the other hand, as discussed in  \cite[Theorem 3.37]{a8}, the family of Gieseker-semistable sheaves $F$ on $X$ such that $F$ has a fixed Hilbert polynomial is bounded. Therefore, the family of coherent sheaves with finitely many fixed Hilbert polynomials is also bounded.  We will use this boundedness property in our construction of parameterizing moduli stacks; 

Now fix the Hilbert polynomial $P_{F}(t)=P$ as above, and use the fact that given a bounded family $\mathcal{F}$ of coherent sheaves with fixed Hilbert polynomial $P$ (here $F$ denotes each member of the family $\mathcal{F}$), there exists an upper bound for their Castelnuvo-Mumford regularity, given by the integer $m$ such that for each member of the family, $F$, the twisted sheaf $F(m')$ is globally generated for all $m'\geq m$. Fix such $m'$ and let $V$ be the complex vector space of \text{\text{dim}}ension $d=P(m')$ given by $V=\operatorname{H}^{0}(F\otimes \mathcal{O}_{X}(m'))$. Twisting the sheaf $F$ by the fixed large enough integer $m'$ would ensure one to get a surjective morphism of coherent sheaves $V\otimes \mathcal{O}_{X}(-m')\rightarrow F$. One can then construct a scheme parametrizing the flat quotients of $V\otimes \mathcal{O}_{X}(-m')$ with fixed given Hilbert polynomial $P$. This by usual arguments provides us with Grothendieck's Quot-scheme. Here to shorten the notation we use $\mathcal{Q}$ to denote $\operatorname{Quot}_{P}(V\otimes \mathcal{O}_{X}(-m'))$. Denote by $\mathcal{Q}^{ss}\subset \mathcal{Q}$ the sublocus of Gieseker-semistable ($\tau$-semistable for short) sheaves $F$ with fixed Hilbert polynomial $P$. 
\begin{defn}\label{scheme-beta-def}
Let $n$ in Definition \ref{Bp-sflat} be given so that $n\gg m'$. Define $\mathcal{P}$ over $\mathcal{Q}^{ss}$ to be a bundle whose fibers parameterize $\operatorname{H}^{0}(F(n))$. The fibers of the bundle $\mathcal{P}^{\oplus r}$ over each point $[F]=\{p\}$, where $p\in \mathcal{Q}^{ss}$, parameterize $\operatorname{H}^{0}(F(n))^{\oplus r}$.
In other words the fibers of $\mathcal{P}^{\oplus r}$ parameterize the maps $\mathcal{O}_{X}^{\oplus r}(-n)\rightarrow F$ (which define the complexes representing the objects in $\mathcal{B}^{\textbf{R}}_{p}$). 
\end{defn} 
There exists a right action of $GL(V)$ (where $V$ is as above) on  the Quot scheme $\mathcal{Q}$ which induces an action on $\mathcal{Q}^{ss}$, after restriction to the open subscheme of $\tau$-semistable sheaves. It is trivially seen that the action of $GL(V)$ on $\mathcal{Q}^{ss}$ induces a right action on $\mathcal{P}^{\oplus r}$. However note that, since we have fixed the trivialization of $\mathcal{O}^{\oplus r}_{X}(-n)$ for the objects in $\mathcal{B}^{\textbf{R}}_{p}$, there exists also an extra action of $GL_{r}(\mathbb{C})$ on $\mathcal{P}^{\oplus r}$ which is described as follows; let $[\mathcal{O}_{X}^{\oplus r}(-n)\xrightarrow{\phi} F]$ be given as a point in $\mathcal{P}^{\oplus r}$. Let $\psi\in GL_{r}(\mathbb{C})$ be the map given by $\psi: \mathcal{O}_{X}(-n)^{\oplus r}\rightarrow \mathcal{O}_{X}(-n)^{\oplus r}$. The action of $GL_{r}(\mathbb{C})$ on $\mathcal{P}^{\oplus r}$ is defined via precomposing the sections of $F$ with $\psi$ as shown in the diagram below:
\begin{equation}\label{action}
\begin{tikzpicture}
back line/.style={densely dotted}, 
cross line/.style={preaction={draw=white, -, 
line width=6pt}}] 
\matrix (m) [matrix of math nodes, 
row sep=2em, column sep=3.25em, 
text height=1.5ex, 
text depth=0.25ex]{ 
\mathcal{O}^{\oplus r}_{X}(-n)&\\
\mathcal{O}^{\oplus r}_{X}(-n)&F.\\};
\path[->]
(m-1-1) edge node [left] {$\psi$} (m-2-1)

(m-2-1)  edge node [above]{$\phi$}(m-2-2);
\end{tikzpicture}
\end{equation}
Note that, by the Grothendieck-Riemann-Roch theorem, fixing a polarization over $X$ and the chern character of sheaves as $\text{ch}(F)=\beta$, induces fixed Hilbert polynomial for such $F$. Therefore this is the reason why we index our parameterizing moduli stacks by $(\beta,r)$ instead of $(P,r)$. 
%\backmatter
\subsection{The Artin stacks $\mathfrak{M}^{(\beta,r)}_{\mathcal{B}_{p}}$ and $\mathfrak{M}^{(\beta,r)}_{\mathcal{B}^{\textbf{R}}_{p}}$.}\label{sec41}
By definitions \ref{Bp-sflat} and \ref{Bp-R-sflat}  the construction of moduli stack of objects  in $\mathcal{B}_{p}$ and $\mathcal{B}^{\textbf{R}}_{p}$ is done similar to \cite[Section 5]{a39}:

\begin{theorem}\label{theorem81}
Let $\mathcal{P}^{\oplus r}$ be as in Definition \ref{scheme-beta-def}. Then the following statements hold true:
\begin{enumerate}
\item Let $\left[\frac{\mathcal{P}^{\oplus r}}{GL(V)}\right]$ be the stack theoretic quotient of $\mathcal{P}^{\oplus r}$ by $GL(V)$. Then, there exists an isomorphism of stacks $$\mathfrak{M}^{(\beta,r)}_{\mathcal{B^{\textbf{R}}}_{p}}\cong \left[\frac{\mathcal{P}^{\oplus r}}{GL(V)}\right].$$In particular, $\mathfrak{M}^{(\beta,r)}_{\mathcal{B^{\textbf{R}}}_{p}}$ is an Artin stack. 

\item The moduli stack, $\mathfrak{M}^{(\beta,r)}_{\mathcal{B}^{\textbf{R}}_{p}}$, is a $GL_{r}(\mathbb{C})$-torsor over $\mathfrak{M}^{(\beta,r)}_{\mathcal{B}_{p}}$. In particular there exists an isomorphism of stacks $$\mathfrak{M}^{(\beta,r)}_{\mathcal{B}_{p}}\cong \left[\frac{\mathfrak{M}^{(\beta,r)}_{\mathcal{B}^{\textbf{R}}_{p}}}{GL_{r}(\mathbb{C})}\right].$$

\item It is true that locally in the flat topology, $\mathfrak{M}^{(\beta,r)}_{\mathcal{B}_{p}}\cong \mathfrak{M}^{(\beta,r)}_{\mathcal{B}^{\textbf{R}}_{p}}\times \left[\frac{\operatorname{Spec}(\mathbb{C})}{GL_{r}(\mathbb{C})}\right]$. This isomorphism does not hold true globally unless $r=1$.
\end{enumerate}
\end{theorem}
\begin{proof} The proofs of parts (1), (2) are essentially the same as \cite[Proposition 5.5, Corollary 6.4, Theorems 6.2 and Theorem 6.5]{a39}. Now we prove part (3) by showing that there exists a forgetful map $\pi:\mathfrak{M}^{(\beta,r)}_{\mathcal{B}^{\textbf{R}}_{p}}\rightarrow \mathfrak{M}^{(\beta,r)}_{\mathcal{B}_{p}}$ which induces a map from $\mathfrak{M}^{(\beta,r)}_{\mathcal{B}^{\textbf{R}}_{p}}\times \left[\frac{\operatorname{Spec}(\mathbb{C})}{GL_{r}(\mathbb{C})}\right] $ to $\mathfrak{M}^{(\beta,r)}_{\mathcal{B}_{p}}$ and show that this map has an inverse locally but not globally unless $r=1$. First we prove the claim for $r=1$; 

For $r=1$, $GL_{1}(\mathbb{C})=\mathbb{G}_{m}$. For a $\mathbb{C}$-scheme $S$, an $S$-point of $\mathfrak{M}^{(\beta,1)}_{\mathcal{B}^{\textbf{R}}_{p}}\times [\frac{\operatorname{Spec}(\mathbb{C})}{\mathbb{G}_{m}}]$ is identified with the data $(\mathcal{O}_{X\times S}(-n)\rightarrow \mathcal{F}, \mathcal{L}_{S})$ where $\mathcal{L}_{S}$ is a $\mathbb{G}_{m}$ line bundle over $S$. Let $\pi_{S}:X\times S\rightarrow S$ be the natural projection onto the second factor. There exists a map that sends this point to an $S$-point $p\in \mathfrak{M}^{(\beta,1)}_{\mathcal{B}_{p}}$ which is obtained by tensoring with $\mathcal{L}_{S}$, i.e $\mathcal{O}_{X}(-n)\boxtimes \mathcal{L}_{S}\xrightarrow{\phi^{\mathcal{L}}} \mathcal{F}\times \pi_{S}^{*}\mathcal{L}_{S}$. Note that tensoring $\mathcal{O}_{X\times S}(-n)$ with $\pi^{*}_{S}\mathcal{L}_{S}$ does not change the fact that $\mathcal{O}_{X\times S}(-n)|_{s\in S}\cong \mathcal{O}_{X}(-n)\boxtimes \mathcal{L}_{S}|_{s\in S}$ fiber by fiber. Moreover, there exists a section map $s: \mathfrak{M}^{(\beta,1)}_{\mathcal{B}_{p}}\rightarrow \mathfrak{M}^{(\beta,1)}_{\mathcal{B}^{\textbf{R}}_{p}}\times [\frac{\operatorname{Spec}(\mathbb{C})}{\mathbb{G}_{m}}]$; Simply take an $S$-point  $[\mathcal{O}_{X}(-n)\boxtimes \mathcal{L}_{S}\rightarrow \mathcal{F}]\in (\mathfrak{M}^{(\beta,1)}_{\mathcal{B}_{p}})(S)$  and send to an $S$-point in $(\mathfrak{M}^{(\beta,1)}_{\mathcal{B}^{\textbf{R}}_{p}}\times [\frac{\operatorname{Spec}(\mathbb{C})}{\mathbb{G}_{m}}])(S)$ by the map $$[\mathcal{O}_{X}(-n)\boxtimes \mathcal{L}_{S}\rightarrow \mathcal{F}]\mapsto([\mathcal{O}_{X\times S}(-n)\rightarrow \mathcal{F}\otimes \pi_{S}^{*}\mathcal{L}_{S}^{-1}], \mathcal{L}_{S}).$$ Note that since $\mathcal{L}_{S}$ is a line bundle over $S$ then it is invertible and hence a section map is always well defined and $\mathfrak{M}^{(\beta,1)}_{\mathcal{B}_{p}}$ is a $\mathbb{G}_{m}$-gerbe over $\mathfrak{M}^{(\beta,1)}_{\mathcal{B}^{\textbf{R}}_{p}}$. Now let $r>1$. It is left to show that there exists a map from $\mathfrak{M}^{(\beta,r)}_{\mathcal{B}^{\textbf{R}}_{p}}\times \left[\frac{\operatorname{Spec}(\mathbb{C})}{GL_{r}(\mathbb{C})}\right] $ to $\mathfrak{M}^{(\beta,r)}_{\mathcal{B}_{p}}$ and this map does not have an inverse (section map) globally. To proceed further, we state the following definition.
\begin{defn}\label{fiber-stack}
Consider a stack $(\mathfrak{Y}, p_{\mathfrak{Y}}:\mathfrak{Y}\rightarrow Sch\slash \mathbb{C})$.  Given Two morphism of stacks $\pi_{1}:\mathfrak{X}\rightarrow \mathfrak{Y}$ and $\pi_{2}:\mathfrak{X}'\rightarrow \mathfrak{Y}$, the fibered product of $\mathfrak{X}$ and $\mathfrak{X}'$ over $\mathfrak{Y}$ is defined by the category whose objects are defined by triples $(x,x',\alpha)$ where $x\in \mathfrak{X}$ and $x'\in \mathfrak{X}'$ respectively and $\alpha:\pi_{1}(x)\rightarrow \pi_{2}(x')$ is an arrow in $\mathfrak{Y}$ such that $p_{\mathfrak{Y}}(\alpha)=\operatorname{id}$. Moreover the morphisms $(x,x',\alpha)\rightarrow (y,y',\beta)$ are defined by the tuple $(\phi: x\rightarrow y, \psi: x'\rightarrow y')$ such that$$\pi_{1}(\psi)\circ \alpha=\beta\circ \pi_{2}(\phi):P_{E}(x)\rightarrow P_{F}(y').$$
\end{defn}
Now observe that for $r>1$, there exists a forgetful map $\pi:\mathfrak{M}^{(\beta,r)}_{\mathcal{B}^{\textbf{R}}_{p}}\rightarrow \mathfrak{M}^{(\beta,r)}_{\mathcal{B}_{p}}$  which over the $S$-points takes $[\mathcal{O}_{X}(-n)\boxtimes \mathcal{O}_{S}^{\oplus r}\to \mathcal{F}]$ (look at Definition \ref{Bp-R-sflat}) to $[M\boxtimes \mathcal{O}_{X}(-n)\to \mathcal{F}]$ where $M$ is defined in Definition \ref{Bp-sflat}. Moreover, there exists a map $g':\mathfrak{M}^{(\beta,r)}_{\mathcal{B}_{p}}\rightarrow \mathcal{B}GL_{r}(\mathbb{C})$ which sends $[M\boxtimes \mathcal{O}_{X}(-n)\to \mathcal{F}]$ to $M$ by forgetting $\mathcal{F}$. Finally there exists the natural projection $i:\operatorname{Spec}(\mathbb{C})\rightarrow \left[\frac{\operatorname{Spec}(\mathbb{C})}{GL_{r}(\mathbb{C})}\right]=\mathcal{B}GL_{r}(\mathbb{C})$. It follows directly from Definition \ref{fiber-stack} that the diagram:
$$
\begin{tikzpicture}
back line/.style={densely dotted}, 
cross line/.style={preaction={draw=white, -, 
line width=6pt}}] 
\matrix (m) [matrix of math nodes, 
row sep=2em, column sep=3.25em, 
text height=1.5ex, 
text depth=0.25ex]{ 
\mathfrak{M}^{(\beta,r)}_{\mathcal{B}^{\textbf{R}}_{p}}&pt=\operatorname{Spec}(\mathbb{C})\\
\mathfrak{M}^{(\beta,r)}_{\mathcal{B}_{p}}&\mathcal{B}GL_{r}(\mathbb{C})=\left[\frac{\operatorname{Spec}(\mathbb{C})}{GL_{r}(\mathbb{C})}\right]\\};
\path[->]
(m-1-1) edge node [above] {$g$} (m-1-2)
(m-1-1) edge node [left] {$\pi$} (m-2-1)
(m-1-2) edge node [right] {$i$}(m-2-2)
(m-2-1) edge node [above] {$\acute{g}$} (m-2-2);
\end{tikzpicture}
$$ 
is a fibered diagram and $\mathfrak{M}^{(\beta,r)}_{\mathcal{B}^{\textbf{R}}_{p}}=\mathfrak{M}^{(\beta,r)}_{\mathcal{B}_{p}}\times_{\mathcal{B}GL_{r}(\mathbb{C})}pt$.
However, one cannot use the same argument used for case of $r=1$ to conclude that there exists a section map $s:\mathfrak{M}^{(\beta,r)}_{\mathcal{B}_{p}}\rightarrow \mathfrak{M}^{(\beta,r)}_{\mathcal{B}^{\textbf{R}}_{p}}\times \left[\frac{\operatorname{Spec}(\mathbb{C})}{GL_{r}(\mathbb{C})}\right]$, since the $S$-point of $\mathcal{B}GL_{r}(\mathbb{C})$ is a $GL_{r}(\mathbb{C})$ bundle over $S$ and this vector bundle is trivializable locally but not globally. Therefore locally one may think of $\mathfrak{M}^{(\beta,r)}_{\mathcal{B}_{p}}$ as isomorphic to $\mathfrak{M}^{(\beta,r)}_{\mathcal{B}^{\textbf{R}}_{p}}\times [\frac{\operatorname{Spec}(\mathbb{C})}{GL_{r}(\mathbb{C})}]$ but not globally.\end{proof}
\begin{defn}\label{stable-locus}
Define $\mathfrak{M}^{(\beta,r)}_{\mathcal{B}_{p},ss}(\tilde{\tau})$ as the substack of $\mathfrak{M}^{(\beta,r)}_{\mathcal{B}_{p}}$ parameterizing $\tilde{\tau}$-semistable objects in $\mathcal{B}_{p}$. Since $\mathfrak{M}^{(\beta,r)}_{\mathcal{B}_{p}}$ is of finite type by \cite[Lemma 13.2]{a30} , then $\mathfrak{M}^{(\beta,r)}_{\mathcal{B}_{p},ss}(\tilde{\tau})$ is of finite type for all $(\beta,r)\in \mathcal{C}(\mathcal{B}_{p})$. 
\end{defn}

\section{Stack function identities in the Ringel-Hall algebra}
We review here some basic facts about the stack functions in the Ringel-Hall algebras. Let $\mathfrak{M}$ be an Artin $\mathbb{C}$-stack with affine geometric stabilizers.  Consider pairs $(\mathfrak{R}, \rho)$ where $\mathfrak{R}$ is given by a finite type Artin $\mathbb{C}$-stack with affine geometric stabilizers and $\rho:=\mathfrak{R}\rightarrow\mathfrak{M}$ is a 1-morphism. Now define an equivalence relation for such pairs where $(\mathfrak{R}, \rho)$ and $(\mathfrak{R}', \rho')$ are called equivalent if there exists a 1-morphism $\iota: \mathfrak{R}\to \mathfrak{R}'$ such that $\rho'\circ \iota$ and $\rho$ are 2-isomorphic 1-morphisms $\mathfrak{R}\to \mathfrak{M}$. Joyce and Song in \cite[Section 2.2]{a30} define the space of stack functions $\underline{\operatorname{S}\overline{\operatorname{F}}}(\mathfrak{M},\chi,\mathbb{Q})$ as the $\mathbb{Q}$-vector space  generated by the above equivalence classes of pairs $[(\mathfrak{R},\rho)]$ such that the following relations are imposed:

\begin{enumerate}
\item Given a closed substack $(\mathfrak{G},\rho|_{\mathfrak{G}})\subset (\mathfrak{R},\rho)$ we have 
\begin{equation}\label{splitt-motiv}
[(\mathfrak{R},\rho)]=[(\mathfrak{G},\rho|_{\mathfrak{G}})]+[(\mathfrak{R}\backslash\mathfrak{G},\rho|_{\mathfrak{R}\backslash\mathfrak{G}})]
\end{equation} 
\item Let $\mathfrak{R}$ be a $\mathbb{C}$-stack of finite type with affine geometric stabilizers and let $\mathcal{U}$ denote a quasi-projective $\mathbb{C}$-variety and $\pi_{\mathfrak{R}}:\mathfrak{R}\times U\rightarrow \mathfrak{R}$ the natural projection and $\rho:\mathfrak{R}\rightarrow \mathfrak{M}$ a 1-morphism. Then 
\begin{equation}\label{property2}
[(\mathfrak{R}\times \mathcal{U}, \rho \circ \pi_{\mathfrak{R}})]=\chi([\mathcal{U}])[(\mathfrak{R},\rho)].
\end{equation}
\item Assume $\mathfrak{R}\cong [X/G]$ where $X$ is a quasiprojective $\mathbb{C}$-variety and $G$ a very special algebraic $\mathbb{C}$-group acting on $X$ with maximal torus $T^{G}$, then we have
\begin{equation*}
[(\mathfrak{R},\rho)]=\sum_{Q\in \mathcal{Q}(G,T^{G})}F(G,T^{G},Q)[([X/Q],\rho\circ \iota^{Q})],
\end{equation*}
where the rational coefficients $F(G,T^{G},Q)$ have a complicated definition explained in \cite[Section 6.2]{a33}. 
Here $\mathcal{Q}(G,T^{G})$ is the set of closed $\mathbb{C}$-subgroups $Q$ of $T^{G}$ such that $Q=T^{G}\cap C_{G}(Q)$ where $C_{G}(Q)=\{g\in G:sg=gs \,\,\text{for all} \,\,s\in Q\}$ and $\iota^{Q}:[X/Q]\rightarrow\mathfrak{R}\cong [X/G]$ is the natural projection 1-morphism. Similarly, one defines $\overline{\operatorname{SF}}(\mathfrak{M},\chi,\mathbb{Q})$ by restricting the 1-morphisms $\rho$ in part 1, 2, 3 to be representable.
\item There exist the notions of multiplication, pullback, pushforward of stack functions in  $\underline{\operatorname{S}\overline{\operatorname{F}}}(\mathfrak{M},\chi,\mathbb{Q})$ and $\overline{\operatorname{SF}}(\mathfrak{M},\chi,\mathbb{Q})$. For further discussions look at (Joyce and Song) \cite[Definitions 2.6, 2.7 and Theorem 2.9]{a30}. 
\end{enumerate}
Joyce and Song in \cite[Section 13.3]{a30} define the notion of characteristic stack functions  $\overline{\delta}^{(\beta,d)}_{ss}(\tilde{\tau})\in \overline{\operatorname{SF}}(\mathfrak{M}_{\mathcal{B}_{p}}(\tilde{\tau}),\chi,\mathbb{Q})$. Moreover, in the instance where the moduli stack contains strictly semistable objects, the authors define the ``\textit{logarithm}" of the moduli stack by the stack function $\overline{\epsilon}^{(\beta,d)}(\tilde{\tau})$ given as an element of the Hall-algebra of stack functions supported over virtual indecomposables, we will include these definitions below:
\begin{defn}\label{decompos}
Define the stack functions $\overline{\delta}^{(\beta,r)}_{ss}(\tilde{\tau}):=\overline{\delta}^{(\beta,r)}_{\mathfrak{M}^{(\beta,r)}_{\mathcal{B}_{p},ss}}(\tilde{\tau})$ in $\operatorname{SF}_{al}(\mathfrak{M}^{(\beta,r)}_{\mathcal{B}_{p},ss}(\tilde{\tau}))$ (for definition of $\operatorname{SF}_{al}$ look at \cite[Definition 3.3]{a30} for $(\beta,r)\in C(\mathcal{B}_{p})$. Now define elements $\overline{\epsilon}^{(\beta,r)}(\tilde{\tau})$ in $\operatorname{SF}_{al}(\mathfrak{M}^{(\beta,r)}_{\mathcal{B}_{p},ss}(\tilde{\tau}))$
\begin{equation}\label{decompos-eq}
\overline{\epsilon}^{(\beta,r)}(\tilde{\tau})=\sum_{\begin{subarray}{1}n\geq 1,(\beta_{1},r_{1}),\cdots, (\beta_{n},r_{n})\in C(\mathcal{B}_{p})\\(\beta_{1},r_{1})+\cdots+(\beta_{n},r_{n})=(\beta,r)\\\tilde{\tau}(\beta_{i},r_{i})=\tilde{\tau}(\beta,r) \forall i\end{subarray}} \frac{(-1)^{n-1}}{n}\overline{\delta}^{(\beta_{1},r_{1})}_{ss}(\tilde{\tau})*\overline{\delta}^{(\beta_{2},r_{2})}_{ss}(\tilde{\tau})*\cdots*\overline{\delta}^{(\beta_{n},r_{n})}_{ss}(\tilde{\tau}),
\end{equation}
where $*$ is the Ringel-Hall multiplication defined in \cite[Definition 3.3]{a30}.
\end{defn}

Our goal in the remainder of this article is to first evaluate the element of the Hall algebra $\overline{\epsilon}^{(\beta,r)}(\tilde{\tau})$ above, by explicitly computing the right hand side of Equation \eqref{decompos-eq} and then calculate the invariants. To do this one needs to apply the Joyce-Song ``\textit{Lie algebra morphism}" to $\overline{\epsilon}^{(\beta,r)}(\tilde{\tau})$. In order to clarify the latter, we need further definitions:

\begin{defn}
\cite[Definition 13.3]{a30}. Define the Euler form in $\mathcal{B}_{p}$ as $\bar{\chi}_{\mathcal{B}_{p}}:K(\mathcal{B}_{p})\times K(\mathcal{B}_{p})\to \mathbb{Z}$ such that
\begin{equation}
\bar{\chi}_{\mathcal{B}_{p}}((\beta,d), (\gamma,e))=\bar{\chi}(\beta,\gamma)-d\bar{\chi}([\mathcal{O}_{X}(-n)], \gamma)+e\bar{\chi}([\mathcal{O}_{X}(-n)],\beta),
\end{equation} 
where $\bar{\chi}()$ is the Euler form on $K(\text{coh}(X))$.
\end{defn}
\begin{defn}\label{13.11}\footnote{Look at \cite[Definition 13.11]{a30}. }
Define $\mathcal{S}$ to be the subset of $(\beta,d)$ in $C(\mathcal{B}_{p})\subset K(\mathcal{B}_{p})$ such that $P_{\beta}(t)=\frac{k}{d!}p(t)$ for $k=0,\cdots, N$ and $d=0$ or 1 or 2. Then $\mathcal{S}$ is a finite set \cite[Theorem 3.37]{a9}. Define a Lie algebra $\tilde{L}(\mathcal{B}_{p})$ to be the $\mathbb{Q}$-vector space with the basis of symbols $\tilde{\lambda}^{(\beta,d)}$ with $(\beta,d)\in \mathcal{S}$ with the Lie bracket
\begin{equation}\label{lambda}
[\tilde{\lambda}^{(\beta,d)},\tilde{\lambda}^{(\gamma,e)}]=(-1)^{\bar{\chi}_{\mathcal{B}_{p}}((\beta,d),(\gamma,e))}\bar{\chi}_{\mathcal{B}_{p}}((\beta,d),(\gamma,e))\tilde{\lambda}^{(\beta+\gamma,d+e)}
\end{equation}
for $(\beta+\gamma,d+e)\in \mathcal{S}$ and $[\tilde{\lambda}^{(\beta,d)},\tilde{\lambda}^{(\gamma,e)}]=0$ otherwise. Here It can be seen that $\bar{\chi}_{\mathcal{B}_{p}}$ is antisymmetric and hence, equation \eqref{lambda} satisfies the Jacobi-identity and that makes $\tilde{L}(\mathcal{B}_{p})$ into a finite-dimensional nilpotent Lie algebra over $\mathbb{Q}$. 
\end{defn}
Now the Joyce-Song Lie algebra morphism is defined as the $\mathbb{Q}$-linear map $\tilde{\Psi}^{\mathcal{B}_{p}}:\textbf{SF}^{ind}_{al}\mathfrak{M}_{\mathcal{B}_{p}}\rightarrow \tilde{L}(\mathcal{B}_{p})$ by applying \cite[Definition 5.13]{a30} to the moduli stack $\mathfrak{M}_{\mathcal{B}_{p}}$ and $\tilde{L}(\mathcal{B}_{p})$\footnote{For further detail look at \cite[Definition 13.11]{a30}}. 
\begin{defn}\label{Bp-ss-defn}
Define the invariant $\textbf{B}^{ss}_{p}(X,\beta,r,\tilde{\tau})$ associated to $\tilde{\tau}$-semistable objects of type $(\beta,r)$ in $\mathcal{B}_{p}$ by$$\tilde{\Psi}^{\mathcal{B}_{p}}(\bar{\epsilon}^{(\beta,r)}(\tilde{\tau}))=\textbf{B}^{ss}_{p}(X,\beta,r,\tilde{\tau})\cdot\tilde{\lambda}^{(\beta,r)},$$
where $\tilde{\Psi}^{\mathcal{B}_{p}}$ is given by the Lie algebra morphism above. 
\end{defn}

From now on, to minimize the unnecessary computational complexity, we restrict our analysis to rank 2 pairs, i.e. $r=2$. First to motivate the intuition behind our computations we study an interesting example.

\section{Direct computation of invariants in an example}\label{stable=semistable}\label{compute-example}
\begin{example}
Computation of $\textbf{B}^{ss}_{p}(X,[\mathbb{P}^{1}],2,\tilde{\tau})$ where $X$ is given by the total space of $\mathcal{O}_{\mathbb{P}^{1}}^{\oplus 2}(-1)\rightarrow \mathbb{P}^{1}$.
\end{example}
We compute the invariant of $\tilde{\tau}$-semistable objects $(F,\mathbb{C}^{2},\phi_{\mathbb{C}^{2}})$ of type $([\mathbb{P}^{1}],2)$ in $\mathcal{B}_{p}$. In this case $F$ has rank 1 over its support and $p_{F}(n)=n+\chi(F)$. Assume that $\chi(F)=k$. In this case by computations in \cite{a52} and \cite{a53} the only semistable sheaf, $F$, with $\operatorname{ch}_{2}(F)=[\mathbb{P}^{1}]$ is given by $\mathcal{O}_{\mathbb{P}^{1}}(k-1)$ which is a stable sheaf. First we give the description of $\mathfrak{M}^{([\mathbb{P}^{1}],2)}_{\mathcal{B}_{p},ss}(\tilde{\tau})$;

 By definition, an object of type $([\mathbb{P}^{1}],2)$ in $\mathcal{B}_{p}$ is identified by a complex $\mathcal{O}_{X}(-n)^{\oplus 2}\rightarrow \iota_{*}\mathcal{O}_{\mathbb{P}^{1}}(k-1)$ where $\iota:\mathbb{P}^{1}\hookrightarrow X$ (from now on we suppress $\iota_{*}$ in our notation).  By the constructions in Section \ref{scheme-beta} the parameter scheme of $\tilde{\tau}$-semistable objects is obtained by choosing two sections $(s_{1},s_{2})$ such that $s_{i}\in \operatorname{H}^{0}((\mathcal{O}_{\mathbb{P}^{1}}(n+k-1))$ for $i=1,2$. More over since $\mathcal{O}_{\mathbb{P}^{1}}(k-1)$ is a stable sheaf, its stabilizer is given by $\mathbb{G}_{m}$. \\
An important point to note is that given a $\tilde{\tau}$-semistable object $(F,\mathbb{C}^{2},\phi_{\mathbb{C}^{2}})$ one is always able to obtain a an exact sequence of the form
\begin{equation}\label{strict-semi}
0\rightarrow (F,\mathbb{C},\phi_{\mathbb{C}})\rightarrow (F,\mathbb{C}^{2},\phi_{\mathbb{C}^{2}})\rightarrow (0,\mathbb{C},0)\rightarrow 0,
\end{equation}for every object in the moduli stack and since $\tilde{\tau}(F,\mathbb{C}^{2},\phi_{\mathbb{C}})=1\leq \tilde{\tau}(0,\mathbb{C},0)=1$ then one concludes that all the objects parametrized by the moduli stack are given by extensions of rank 1 $\tilde{\tau}$-stable objects and hence all objects are $\tilde{\tau}$-strictly-semistable. Moreover, note that giving a $\tilde{\tau}$-semistable object of the form $\mathcal{O}_{X}(-n)^{\oplus 2}\xrightarrow{(s_{1},s_{2})}F$, is equivalent to requiring the condition that $(s_{1},s_{2})\neq (0,0)$, since otherwise, one may be able to obtain a an exact sequence:$$0\rightarrow (\mathbb{C}^{2},0,0)\rightarrow (\mathbb{C}^{2},F,0)\rightarrow (0,F,0)\rightarrow 0$$ such that $\tilde{\tau}(\mathbb{C}^{2},0,0)=1>\tilde{\tau}(0,F,0)=0$, hence $(\mathbb{C}^{2},0,0)$ (weakly) destabilizes $(\mathbb{C}^{2},F,0)$ hence a contradiction!. Now use Theorem \ref{theorem81} and find that\begin{equation}\label{stack-1}
\mathfrak{M}^{([\mathbb{P}^{1}],2)}_{\mathcal{B}_{p},ss}(\tilde{\tau})=\left[\frac{ (\operatorname{H}^{0}((\mathcal{O}_{\mathbb{P}^{1}}(n+k-1)))^{\oplus 2}\backslash \{0\}/\mathbb{G}_{m}}{GL_{2}(\mathbb{C})}\right]\cong \left[\frac{\mathbb{P}( \operatorname{H}^{0}((\mathcal{O}_{\mathbb{P}^{1}}(n+k-1))^{\oplus 2})}{GL_{2}(\mathbb{C})}\right].
\end{equation}
We need to compute the element of the Hall algebra $\bar{\epsilon}^{([\mathbb{P}^{1}],2)}(\tilde{\tau})$. By applying Definition \ref{decompos} to $\mathfrak{M}^{([\mathbb{P}^{1}],2)}_{\mathcal{B}_{p},ss}(\tilde{\tau})$ we obtain:
\begin{equation}\label{compute-it}
\bar{\epsilon}^{([\mathbb{P}^{1}],2)}(\tilde{\tau})=\bar{\delta}^{([\mathbb{P}^{1}],2)}_{ss}(\tilde{\tau})-\frac{1}{2}\sum_{\beta_{k}+\beta_{l}=[\mathbb{P}^{1}]}\bar{\delta}^{(\beta_{k},1)}_{s}(\tilde{\tau})*\bar{\delta}^{(\beta_{l},1)}_{s}(\tilde{\tau}).
\end{equation}
Now we use a stratification strategy in order to decompose $\mathfrak{M}^{([\mathbb{P}^{1}],2)}_{\mathcal{B}_{p},ss}(\tilde{\tau})$ into a disjoint union of strata as follows; Since the objects in the moduli stack are of type $([\mathbb{P}^{1}],2)$, one would immediately see that the only possible decomposition of a $\tilde{\tau}$-semistable object of type $([\mathbb{P}^{1}],2)$ is given by $([\mathbb{P}^{1}],2)=([\mathbb{P}^{1}],1)+(0,1)$. This means that a strictly $\tilde{\tau}$-semistable object of type $([\mathbb{P}^{1}],2)$ is always given by an (split or non-split) extension, involving an object of type $([\mathbb{P}^{1}],1)$ and an object of type $(0,1)$. Note that it is allowable to flip the order of an extension, as long as the original strictly semistable object is re-produced. Our stratification then involves a study of the parametrizing moduli stack for the objects, depending on which extension is used to produce them. This will enable us to decompose $\mathfrak{M}^{([\mathbb{P}^{1}],2)}_{\mathcal{B}_{p},ss}(\tilde{\tau})$ into a disjoint union of split and non-split strata (i.e. induced by split or non-split extensions).\\
 \begin{defn}\label{2-strata}
Define $\mathfrak{M}^{([\mathbb{P}^{1}],2)}_{\mathcal{B}_{p},sp}(\tilde{\tau})\subset \mathfrak{M}^{([\mathbb{P}^{1}],2)}_{\mathcal{B}_{p},ss}(\tilde{\tau})$ to be the locally closed stratum over which an object of type $([\mathbb{P}^{1}],2)$ is given by split extensions involving objects of type $([\mathbb{P}^{1}],1)$ and $(0,1)$.\\
Define $\mathfrak{M}^{([\mathbb{P}^{1}],2)}_{\mathcal{B}_{p},nsp}(\tilde{\tau})\subset \mathfrak{M}^{([\mathbb{P}^{1}],2)}_{\mathcal{B}_{p},ss}(\tilde{\tau})$ to be a locally closed stratum over which an object of type $([\mathbb{P}^{1}],2)$ is given by non-split extensions involving objects of type $([\mathbb{P}^{1}],1)$ and $(0,1)$.
\end{defn}
Now we study the structure of each stratum separately.
\subsection{Stacky structure of $\mathfrak{M}^{([\mathbb{P}^{1}],2)}_{\mathcal{B}_{p},sp}(\tilde{\tau})$}
It is easy to see that any $\tilde{\tau}$-semistable objects of type $([\mathbb{P}^{1}],2)$ given as $$[\mathcal{O}_{X}(-n)^{\oplus 2}\rightarrow \mathcal{O}_{\mathbb{P}^{1}}(k-1)]\cong \left[\mathcal{O}_{X}(-n)^{\oplus 2}\rightarrow \mathcal{O}_{\mathbb{P}^{1}}(k-1)\right]\oplus \left[\mathcal{O}_{X}(-n)\rightarrow 0 \right]$$ has the property that the sections $s_{1},s_{2}$ for this object are linearly dependent on one another. Hence the underlying parameter scheme of $\tilde{\tau}$-semistable objects of this given form, is given by chooing a nonzero section of $\mathcal{O}_{\mathbb{P}^{1}}(n+k-1)$ in other words we obtain $\operatorname{H}^{0}(\mathcal{O}_{\mathbb{P}^{1}}(n+k-1))\backslash \{0\}$. Now we need to take the quotient of this space by the stabilizer group of points. We know that the condition required for a $\tilde{\tau}$-semistable object $\mathcal{O}_{X}(-n)^{\oplus 2}\xrightarrow{(s_{1},s_{2})}F$ to be given by split extensions of rank 1 objects, is that $s_{1}$ and $s_{2}$ are linearly dependent on one another. Now pick such an object given by $\mathcal{O}_{X}(-n)^{\oplus 2}\xrightarrow{(s_{1},0)}F$. The automorphisms of this object are given by the group which makes the following diagram commutative:
\begin{center}
\begin{tikzpicture}
back line/.style={densely dotted}, 
cross line/.style={preaction={draw=white, -, 
line width=6pt}}] 
\matrix (m) [matrix of math nodes, 
row sep=3em, column sep=3.5em, 
text height=1.5ex, 
text depth=0.25ex]{ 
\mathcal{O}_{X}(-n)^{\oplus 2}&\mathcal{O}_{\mathbb{P}^{1}}(k-1)\\
\mathcal{O}_{X}(-n)^{\oplus 2}&\mathcal{O}_{\mathbb{P}^{1}}(k-1)\\};
\path[->]
(m-1-1) edge node [above] {$(s_{1},0)$} (m-1-2)
(m-1-1) edge node [left] {$\cong$} (m-2-1)
(m-1-2) edge node [right] {$\cong$} (m-2-2)
(m-2-1) edge node [above] {$(s_{1},0)$} (m-2-2);
\end{tikzpicture}
\end{center}
Hence it is seen that the left vertical map needs to be given by a subgroup of $GL_{2}(\mathbb{C})$ which preserves $s_{1}$, i.e the Borel subgroup of $GL_{2}(\mathbb{C})$ whose elements are given by $2\times 2$ upper triangular matrices $\left(\begin{array}{cc}  k_{1}&k_{2}\\
 0&k_{3}
 \end{array}\right)$ where $k_{1},k_{3}\in \mathbb{G}_{m}$ and $k_{2}\in \mathbb{A}^{1}$. Having fixed one of the automorphisms via fixing $k_{1},k_{2},k_{3}$, it is seen that, by the commutativity of the square diagram, the right vertical map needs to be given by multiplication by $k_{1}$ which is an element of $\mathbb{G}_{m}$. Note that one needs to take the quotient of the parameter scheme by all  isomorphisms between any two objects in the split stratum, not just the  automorphisms of one fixed representative. In general for an object to live in the split stratum one requires the sections $(s_{1},s_{2})$ to be given by $(s_{1},a\cdot s_{1})$. We observed that fixing a representative for a split object of rank $2$ (such as fixing $(s_{1},s_{2})=(s_{1},0)$ as above) would tell us that its automorphisms are given by $\mathbb{G}_{m}^{2}\rtimes \mathbb{A}^{1}$. Hence, taking into account all possible representatives implies that the stabilizer group of objects in the spit stratum is given by $\mathbb{G}_{m}^{2}\rtimes \mathbb{A}^{1}\times \mathbb{G}_{m}$. Hence we obtain
\begin{equation}\label{tof1}
\mathfrak{M}^{([\mathbb{P}^{1}],2)}_{\mathcal{B}_{p},sp}(\tilde{\tau})=\left[\frac{\operatorname{H}^{0}(\mathcal{O}_{\mathbb{P}^{1}}(n+k-1))\backslash \{0\}}{\mathbb{G}_{m}^{2}\rtimes \mathbb{A}^{1}\times \mathbb{G}_{m}}\right]=\left[\frac{\mathbb{P}(\operatorname{H}^{0}(\mathcal{O}_{\mathbb{P}^{1}}(n+k-1)))}{\mathbb{G}_{m}^{2}\rtimes \mathbb{A}^{1}}\right].
\end{equation}
\subsection{Stacky structure of $\mathfrak{M}^{([\mathbb{P}^{1}],2)}_{\mathcal{B}_{p},nsp}(\tilde{\tau})$}\label{stacky-structure}
In this case, the objects in $\mathfrak{M}^{([\mathbb{P}^{1}],2)}_{\mathcal{B}_{p},nsp}(\tilde{\tau})$ are given by non-split extensions of the form:
\begin{equation}\label{extensions13}
\begin{tikzpicture}
back line/.style={densely dotted}, 
cross line/.style={preaction={draw=white, -, 
line width=6pt}}] 
\matrix (m) [matrix of math nodes, 
row sep=3em, column sep=3.25em, 
text height=1.5ex, 
text depth=0.25ex]{ 
0&\mathcal{O}_{X}(-n)&\mathcal{O}^{\oplus 2}_{X}(-n)&\mathcal{O}_{X}(-n)&0\\
0&\mathcal{O}_{\mathbb{P}^{1}}(k-1)&F&0&0,\\};
\path[->]
(m-1-1) edge (m-1-2)
(m-1-2) edge (m-1-3) edge node [right] {$s_{1}$} (m-2-2)
(m-1-3) edge (m-1-4) edge node [right] {$(s_{1},s_{2})$} (m-2-3)
(m-1-4) edge (m-1-5) edge (m-2-4)
(m-2-1) edge (m-2-2)
(m-2-2) edge node [above] {$\cong$} (m-2-3)
(m-2-3) edge (m-2-4)
(m-2-4) edge (m-2-5);
\end{tikzpicture}
\end{equation} 
Note that switching the place of $\mathcal{O}_{\mathbb{P}^{1}}(k-1)$ and $0$ in the bottom row of diagram \eqref{extensions13} would produce a split extension and so such extensions can not lie in the non-split stratum. Now in order to obtain non-split extensions, one needs to choose two sections $s_{1},s_{2}$ such that $s_{1}$ and $s_{2}$ are linearly independent. The set of all linearly independent choices of $s_1$ and $s_{2}$ spans a two dimensional subspace of $\operatorname{H}^{0}(\mathcal{O}_{\mathbb{P}^{1}}(n+k-1))$ which is given by the Grassmanian:$$\operatorname{G}(2,n+k).$$Now we need to take the quotient of this scheme by the stabilizer group of points in the stratum. We know that the condition required for a $\tilde{\tau}$-semistable object $\mathcal{O}_{X}(-n)^{\oplus 2}\xrightarrow{(s_{1},s_{2})}F$ to be given by nonsplit extensions of rank 1 objects is that $s_{1}$ and $s_{2}$ are linearly independent. Now pick such an object given by $\mathcal{O}_{X}(-n)^{\oplus 2}\xrightarrow{(s_{1},s_{2})}F$. The automorphisms of this object are given by the group which makes the following diagram commutative:
\begin{center}
\begin{tikzpicture}
back line/.style={densely dotted}, 
cross line/.style={preaction={draw=white, -, 
line width=6pt}}] 
\matrix (m) [matrix of math nodes, 
row sep=3em, column sep=3.5em, 
text height=1.5ex, 
text depth=0.25ex]{ 
\mathcal{O}_{X}(-n)^{\oplus 2}&\mathcal{O}_{\mathbb{P}^{1}}(k-1)\\
\mathcal{O}_{X}(-n)^{\oplus 2}&\mathcal{O}_{\mathbb{P}^{1}}(k-1)\\};
\path[->]
(m-1-1) edge node [above] {$(s_{1},s_{2})$} (m-1-2)
(m-1-1) edge node [left] {$\cong$} (m-2-1)
(m-1-2) edge node [right] {$\cong$} (m-2-2)
(m-2-1) edge node [above] {$(s_{1},s_{2})$} (m-2-2);
\end{tikzpicture}
\end{center}
Hence it is seen that the left vertical map needs to be given by a subgroup of $GL_{2}(\mathbb{C})$ whose elements are given by $2\times 2$ diagonal matrices of the form $\left(\begin{array}{cc}  k_{1}&0\\
 0&k_{1}
 \end{array}\right)$ where $k_{1}\in \mathbb{G}_{m}$. Having fixed one of the automorphisms via fixing $k_{1}$, it is seen that by the commutativity of the square diagram, the right vertical map needs to be given by multiplication by $k_{1}$ which is an element of $\mathbb{G}_{m}$. Hence we obtain
\begin{equation}\label{tof2}
\mathfrak{M}^{([\mathbb{P}^{1}],2)}_{\mathcal{B}_{p},nsp}(\tilde{\tau})=\left[\frac{\operatorname{G}(2,n+k)}{\mathbb{G}_{m}}\right].
\end{equation} 
Now we are ready to compute $\sum_{\beta_{k}+\beta_{l}=[\mathbb{P}^{1}]}\bar{\delta}^{(\beta_{k},1)}_{s}(\tilde{\tau})*\bar{\delta}^{(\beta_{l},1)}_{s}(\tilde{\tau})$ appearing on the right hand side of \eqref{compute-it}. We use the fact that
\begin{equation}\label{compute-it-2}
\sum_{\beta_{k}+\beta_{l}=[\mathbb{P}^{1}]}\bar{\delta}^{(\beta_{k},1)}_{s}(\tilde{\tau})*\bar{\delta}^{(\beta_{l},1)}_{s}(\tilde{\tau})=\bar{\delta}^{([\mathbb{P}^{1}],1)}_{s}(\tilde{\tau})*\bar{\delta}^{(0,1)}_{s}(\tilde{\tau})+\bar{\delta}^{(0,1)}_{s}(\tilde{\tau})*\bar{\delta}^{([\mathbb{P}^{1}],1)}_{s}(\tilde{\tau})
\end{equation} 
and compute each term on the right hand side of \eqref{compute-it-2} separately.
\begin{remark}
As we described above, there exists an action of $GL_{2}(\mathbb{C})$ on $\textbf{S}:=\mathbb{P}(\operatorname{H}^{0}(\mathcal{O}_{\mathbb{P}^{1}}(n+k-1))^{\oplus 2}))$. This action induces an action of the corresponding Lie algebra on the tangent space of $\textbf{S}$ given by the map:
\begin{align}\label{lie-tangent1}
\mathcal{O}_{\textbf{S}}\otimes \mathfrak{g}l_{2}(\mathbb{C})\rightarrow T_{\textbf{S}},
\end{align}
where $\mathfrak{g}l_{2}(\mathbb{C})$ denotes the Lie algebra associated to the group $GL_{2}(\mathbb{C})$. The dimension of the automorphism group of objects representing the elements of $\textbf{S}$ is given by the dimension of the stabilizer (in $GL_{2}(\mathbb{C})$) group of these elements, which itself is given by the dimension of the kernel of the map in \eqref{lie-tangent1}. On the other hand, the dimension of the kernel of the map in \eqref{lie-tangent1} is an upper-semicontinious function. Therefore by the usual arguments, we obtain a stratification of $\textbf{S}$ which induces a stratification of $\left[\frac{\textbf{S}}{GL_{2}(\mathbb{C})}\right]$ into locally closed strata, such that over each stratum the dimension of the stabilizer group is constant as we vary over points inside that stratum. Here in Definition \ref{2-strata} we stated without proof that the defined strata are locally closed in $\mathfrak{M}^{([\mathbb{P}^{1}],2)}_{\mathcal{B}_{p},ss}(\tilde{\tau})$, this is discussed in detail and for more general cases in the Appendix .
\end{remark}
\subsection{Computation of $\bar{\delta}^{([\mathbb{P}^{1}],1)}_{s}(\tilde{\tau})*\bar{\delta}^{(0,1)}_{s}(\tilde{\tau})$}\label{first-d1d1}
Given $\bar{\delta}^{([\mathbb{P}^{1}],1)}_{s}(\tilde{\tau})=(\left[\frac{\mathbb{P}(\operatorname{H}^{0}(\mathcal{O}_{\mathbb{P}^{1}}(n+k-1)))}{\mathbb{G}_{m}}\right],\rho_{1})$ and $\bar{\delta}^{(0,1)}_{s}(\tilde{\tau})=(\left[\frac{\operatorname{Spec}(\mathbb{C})}{\mathbb{G}_{m}}\right],\rho_{2})$ ($\rho_{1},\rho_{2}$ are natural embedding maps to $\mathfrak{M}_{\mathcal{B}_{p}}(\tilde{\tau})$), consider the diagram:
\begin{equation}\label{embedding2}
\begin{tikzpicture}
back line/.style={densely dotted}, 
cross line/.style={preaction={draw=white, -, 
line width=6pt}}] 
\matrix (m) [matrix of math nodes, 
row sep=3em, column sep=3em, 
text height=1.5ex, 
text depth=0.25ex]{ 
\mathcal{Z}'_{12}&\mathfrak{Exact}_{\mathcal{B}_{p}}&\left[\frac{\mathbb{P}( \operatorname{H}^{0}((\mathcal{O}_{\mathbb{P}^{1}}(n+k-1))^{\oplus 2})}{GL_{2}(\mathbb{C})}\right]\\
\left[\frac{\mathbb{P}(\operatorname{H}^{0}(\mathcal{O}_{\mathbb{P}^{1}}(n+k-1)))}{\mathbb{G}_{m}}\right]\times \left[\frac{\operatorname{Spec}(\mathbb{C})}{\mathbb{G}_{m}}\right]&\mathfrak{M}_{\mathcal{B}_{p}}(\tilde{\tau})\times \mathfrak{M}_{\mathcal{B}_{p}}(\tilde{\tau})&\\};
\path[->]
(m-1-1) edge node [above] {$\Phi$} (m-1-2)
(m-1-2) edge node [above] {$\pi_{2}$} (m-1-3)
(m-1-1) edge (m-2-1)
(m-1-2) edge node [right] {$\pi_{1}\times \pi_{3}$}(m-2-2)
(m-2-1) edge node [above] {$\rho_{1}\times \rho_{2}$} (m-2-2);
\end{tikzpicture}
\end{equation} 
where $\mathcal{Z}'_{12}$ is given by the scheme parametrizing the set of commutative diagrams: 
\begin{equation}\label{extensions13-1}
\begin{tikzpicture}
back line/.style={densely dotted}, 
cross line/.style={preaction={draw=white, -, 
line width=6pt}}] 
\matrix (m) [matrix of math nodes, 
row sep=3em, column sep=3.25em, 
text height=1.5ex, 
text depth=0.25ex]{ 
0&\mathcal{O}_{X}(-n)&\mathcal{O}^{\oplus 2}_{X}(-n)&\mathcal{O}_{X}(-n)&0\\
0&\mathcal{O}_{\mathbb{P}^{1}}(k-1)&F&0&0,\\};
\path[->]
(m-1-1) edge (m-1-2)
(m-1-2) edge (m-1-3) edge node [right] {$s_{1}$} (m-2-2)
(m-1-3) edge (m-1-4) edge node [right] {$(s_{1},s_{2})$} (m-2-3)
(m-1-4) edge (m-1-5) edge (m-2-4)
(m-2-1) edge (m-2-2)
(m-2-2) edge node [above] {$\cong$} (m-2-3)
(m-2-3) edge (m-2-4)
(m-2-4) edge (m-2-5);
\end{tikzpicture}
\end{equation} 
Since that the extensions in \eqref{extensions13-1} have the possibility of being split or non-split, therefore, we consider each case separately and define the multiplication $$\bar{\delta}^{([\mathbb{P}^{1}],1)}_{s}(\tilde{\tau})*\bar{\delta}^{(0,1)}_{s}(\tilde{\tau})$$ in each case separately as follows below\\
\begin{defn}\label{11-split}
Let $[\bar{\delta}^{([\mathbb{P}^{1}],1)}_{s}(\tilde{\tau})*\bar{\delta}^{(0,1)}_{s}(\tilde{\tau})]_{sp}$ denote the stratum of $(\pi_{2}\circ\Phi)_{*}\mathcal{Z}'_{12}$ over which the points are represented by split extensions given by the commutative diagram in  \eqref{extensions13-1}.  
\end{defn}
\begin{defn}\label{11-nonsplit}
Let $[\bar{\delta}^{([\mathbb{P}^{1}],1)}_{s}(\tilde{\tau})*\bar{\delta}^{(0,1)}_{s}(\tilde{\tau})]_{nsp}$ denote the stratum of $(\pi_{2}\circ\Phi)_{*}\mathcal{Z}'_{12}$ over which the points are represented by the non-split extensions given by coomutative diagram in  \eqref{extensions13-1}.  
\end{defn}
Therefore $$\bar{\delta}^{([\mathbb{P}^{1}],1)}_{s}(\tilde{\tau})*\bar{\delta}^{(0,1)}_{s}(\tilde{\tau})=[\bar{\delta}^{([\mathbb{P}^{1}],1)}_{s}(\tilde{\tau})*\bar{\delta}^{(0,1)}_{s}(\tilde{\tau})]_{sp}+[\bar{\delta}^{([\mathbb{P}^{1}],1)}_{s}(\tilde{\tau})*\bar{\delta}^{(0,1)}_{s}(\tilde{\tau})]_{nsp}$$
Now we compute $[\bar{\delta}^{([\mathbb{P}^{1}],1)}_{s}(\tilde{\tau})*\bar{\delta}^{(0,1)}_{s}(\tilde{\tau})]_{sp}$. This amounts to choosing sections $s_{1},s_{2}$ so that $s_{1}$ and $s_{2}$ are linearly depending on one another. The scheme parametrizing the nonzero sections $s_{1}$ is given by $\mathbb{P}(\operatorname{H}^{0}(\mathcal{O}_{\mathbb{P}^{1}}(n+k-1)))$. Now we take the quotient of this scheme by the stabilizer group of points. Similar to arguments in \cite[Lemma 12.1]{a41} given any point in $\mathbb{P}(\operatorname{H}^{0}(\mathcal{O}_{\mathbb{P}^{1}}(n+k-1)))$ represented by the extension in diagram \eqref{extensions13-1}, its stabilizer group is given by the semi-direct product $\mathbb{G}_{m}^{2}\rtimes \operatorname{Hom}(E_{3},E_{1})$ where each factor of $\mathbb{G}_{m}$ amounts to the stabilizer group of objects given as $E_{3}:=\mathcal{O}_{X}(-n)\rightarrow 0$ and $E_{1}:=\mathcal{O}_{X}(-n)\rightarrow \mathcal{O}_{\mathbb{P}^{1}}(k-1)$ respectively. Note that the extra factor of $\mathbb{A}^{1}$ will not appear as a part of the stabilizer group since, by the given description of $E_{1}$ and $E_{3}$ we know that $\operatorname{Hom}(E_{3},E_{1})=0$ for every such $E_{1}$ and $E_{3}$. We obtain the following conclusion:
\begin{align}\label{deldel-sp-2}
&
[\bar{\delta}^{([\mathbb{P}^{1}],1)}_{s}(\tilde{\tau})*\bar{\delta}^{(0,1)}_{s}(\tilde{\tau})]_{sp}=\left[\frac{\mathbb{P}(\operatorname{H}^{0}(\mathcal{O}_{\mathbb{P}^{1}}(n+k-1)))}{\mathbb{G}_{m}^{2}}\right].
\end{align}
Now we compute $[\bar{\delta}^{([\mathbb{P}^{1}],1)}_{s}(\tilde{\tau})*\bar{\delta}^{(0,1)}_{s}(\tilde{\tau})]_{nsp}$. This amounts to choosing $s_{1},s_{2}$ so that $s_{1}$ and $s_{2}$ are linearly independent and the extension in diagram \eqref{extensions13-1} becomes non-split. Note that for any fixed value of $s_{1}$, one has $\mathbb{P}^{1}$ worth of choices for $s_{2}$. Now we need to consider all possible choices of $s_{1}$ and in doing so, we require the sections $s_{1},s_{2}$ to remain linearly independent. This gives the flag variety $\operatorname{F}(1,2,n+k)$. Hence we obtain:
\begin{equation}\label{nsp-deldel}
[\bar{\delta}^{([\mathbb{P}^{1}],1)}_{s}(\tilde{\tau})*\bar{\delta}^{(0,1)}_{s}(\tilde{\tau})]_{nsp}=\left[\frac{\operatorname{F}(1,2,n+k)}{\mathbb{G}_{m}}\right].
\end{equation}
Note that the factor of $\mathbb{G}_{m}$ in the denominator of \eqref{nsp-deldel} is due to the fact that we have used one of the $\mathbb{G}_{m}$ factors in projectivising the bundle of $s_{2}$-choices over the Grassmanian. 
We finish this section by summarizing our computation. By \eqref{deldel-sp-2} and \eqref{nsp-deldel} one obtains:
\begin{equation}\label{summary1}
\bar{\delta}^{([\mathbb{P}^{1}],1)}_{s}(\tilde{\tau})*\bar{\delta}^{(0,1)}_{s}(\tilde{\tau})=\left[\frac{\mathbb{P}(\operatorname{H}^{0}(\mathcal{O}_{\mathbb{P}^{1}}(n+k-1)))}{\mathbb{G}_{m}^{2}}\right]+\left[\frac{\operatorname{F}(1,2,n+k)}{\mathbb{G}_{m}}\right]
\end{equation}
\subsection{Computation of $\bar{\delta}^{(0,1)}_{s}(\tilde{\tau})*\bar{\delta}^{([\mathbb{P}^{1}],1)}_{s}(\tilde{\tau})$}\label{second-d1d1}
Now change the order of $\bar{\delta}^{(0,1)}_{s}(\tilde{\tau})$ and $\bar{\delta}^{([\mathbb{P}^{1}],1)}_{s}(\tilde{\tau})$ and obtain a diagram
\begin{equation}\label{embedding3}
\begin{tikzpicture}
back line/.style={densely dotted}, 
cross line/.style={preaction={draw=white, -, 
line width=6pt}}] 
\matrix (m) [matrix of math nodes, 
row sep=3em, column sep=3em, 
text height=1.5ex, 
text depth=0.25ex]{ 
\mathcal{Z}'_{21}&\mathfrak{Exact}_{\mathcal{B}_{p}}&\left[\frac{\mathbb{P}( \operatorname{H}^{0}((\mathcal{O}_{\mathbb{P}^{1}}(n+k-1))^{\oplus 2})}{GL_{2}(\mathbb{C})}\right]\\
\left[\frac{\operatorname{Spec}(\mathbb{C})}{\mathbb{G}_{m}}\right]\times\left[\frac{\mathbb{P}(\operatorname{H}^{0}(\mathcal{O}_{\mathbb{P}^{1}}(n+k-1)))}{\mathbb{G}_{m}}\right]&\mathfrak{M}_{\mathcal{B}_{p}}(\tilde{\tau})\times \mathfrak{M}_{\mathcal{B}_{p}}(\tilde{\tau})&\\};
\path[->]
(m-1-1) edge node [above] {$\Phi$} (m-1-2)
(m-1-2) edge node [above] {$\pi_{2}$} (m-1-3)
(m-1-1) edge (m-2-1)
(m-1-2) edge node [right] {$\pi_{1}\times \pi_{3}$}(m-2-2)
(m-2-1) edge node [above] {$\rho_{2}\times \rho_{1}$} (m-2-2);
\end{tikzpicture}
\end{equation} 
Here $\mathcal{Z}'_{21}$ is given by the scheme parametrizing the set of commutative diagrams: 
\begin{equation}\label{extensions13-2}
\begin{tikzpicture}
back line/.style={densely dotted}, 
cross line/.style={preaction={draw=white, -, 
line width=6pt}}] 
\matrix (m) [matrix of math nodes, 
row sep=3em, column sep=3.25em, 
text height=1.5ex, 
text depth=0.25ex]{ 
0&\mathcal{O}_{X}(-n)&\mathcal{O}^{\oplus 2}_{X}(-n)&\mathcal{O}_{X}(-n)&0\\
0&0&F&\mathcal{O}_{\mathbb{P}^{1}}(k-1)&0,\\};
\path[->]
(m-1-1) edge (m-1-2)
(m-1-2) edge (m-1-3) edge node [right] {$0$} (m-2-2)
(m-1-3) edge (m-1-4) edge node [right] {$(0,s_{2})$} (m-2-3)
(m-1-4) edge (m-1-5) edge (m-2-4)
(m-2-1) edge (m-2-2)
(m-2-2) edge node [above] {$\cong$} (m-2-3)
(m-2-3) edge (m-2-4)
(m-2-4) edge (m-2-5);
\end{tikzpicture}
\end{equation} 
Note that the computation in this case is easier since the only possible extensions of the form given in \eqref{extensions13-2} are the split extensions. The computation in this case is similar to computation in \eqref{deldel-sp-2} except that one needs to take into account that over any point represented by an extension (as in diagram \eqref{extensions13-2}) of $E_{1}:=\mathcal{O}_{X}(-n)\rightarrow 0$ and $E_{3}:=\mathcal{O}_{X}(-n)\rightarrow \mathcal{O}_{\mathbb{P}^{1}}(k-1)$ we have  $\operatorname{Hom}(E_{3},E_{1})\cong \mathbb{A}^{1}$. Hence by similar discussions we obtain :
\begin{align}\label{deldel-sp-3}
&
\bar{\delta}^{([\mathbb{P}^{1}],1)}_{s}(\tilde{\tau})*\bar{\delta}^{(0,1)}_{s}(\tilde{\tau})=\left[\frac{\mathbb{P}(\operatorname{H}^{0}(\mathcal{O}_{\mathbb{P}^{1}}(n+k-1)))}{\mathbb{G}_{m}^{2}\rtimes \mathbb{A}^{1}}\right].
\end{align}  
\subsection{Computation of $\bar{\epsilon}^{(\beta,2)}(\tilde{\tau})$} 
By \eqref{compute-it}, \eqref{tof1}, \eqref{tof2}, \eqref{compute-it-2}, \eqref{summary1} and \eqref{deldel-sp-3} we obtain:
\begin{align}\label{sagdast}
&
\bar{\epsilon}^{(\beta,2)}(\tilde{\tau})=\left[\frac{\mathbb{P}(\operatorname{H}^{0}(\mathcal{O}_{\mathbb{P}^{1}}(n+k-1)))}{\mathbb{G}_{m}^{2}\rtimes \mathbb{A}^{1}}\right]+\left[\frac{\operatorname{G}(2,n+k)}{\mathbb{G}_{m}}\right]\notag\\
&
-\frac{1}{2}\cdot \left[\frac{\mathbb{P}(\operatorname{H}^{0}(\mathcal{O}_{\mathbb{P}^{1}}(n+k-1)))}{\mathbb{G}_{m}^{2}}\right]-\frac{1}{2}\cdot \left[\frac{\operatorname{F}(1,2,n+k)}{\mathbb{G}_{m}}\right]\notag\\
&
-\frac{1}{2}\cdot\left[\frac{\mathbb{P}(\operatorname{H}^{0}(\mathcal{O}_{\mathbb{P}^{1}}(n+k-1)))}{\mathbb{G}_{m}^{2}\rtimes \mathbb{A}^{1}}\right].
\end{align}
Now use the decomposition used by  Joyce and Song in \cite{a30} (page 158) and write 
\begin{align}
&
\left[\frac{\mathbb{P}( \operatorname{H}^{0}(\mathcal{O}_{\mathbb{P}^{1}}(n+k-1)))}{\mathbb{G}_{m}^{2}\rtimes \mathbb{A}^{1}}\right]=\notag\\
&
F(G,\mathbb{G}_{m}^{2},\mathbb{G}_{m}^{2})\cdot \left[\frac{\mathbb{P}(\operatorname{H}^{0}(\mathcal{O}_{\mathbb{P}^{1}}(n+k-1)))}{\mathbb{G}_{m}^{2}}\right]+F(G,\mathbb{G}_{m}^{2},\mathbb{G}_{m})\cdot \left[\frac{\mathbb{P}(\operatorname{H}^{0}(\mathcal{O}_{\mathbb{P}^{1}}(n+k-1)))}{\mathbb{G}_{m}}\right],
\end{align}
where $F(G,\mathbb{G}_{m}^{2},\mathbb{G}_{m}^{2})=1$ and $F(G,\mathbb{G}_{m}^{2},\mathbb{G}_{m})=-1$. Equation \eqref{sagdast} simplifies as follows:
\begin{align}\label{cancel-it} 
&
\bar{\epsilon}^{(\beta,2)}(\tilde{\tau})=\cancel{\left[\frac{\mathbb{P}(\operatorname{H}^{0}(\mathcal{O}_{\mathbb{P}^{1}}(n+k-1)))}{\mathbb{G}_{m}^{2}}\right]}-\left[\frac{\mathbb{P}(\operatorname{H}^{0}(\mathcal{O}_{\mathbb{P}^{1}}(n+k-1)))}{\mathbb{G}_{m}}\right]\notag\\
&
+\left[\frac{\operatorname{G}(2,n+k)}{\mathbb{G}_{m}}\right]-\cancel{\frac{1}{2}\cdot \left[\frac{\mathbb{P}(\operatorname{H}^{0}(\mathcal{O}_{\mathbb{P}^{1}}(n+k-1)))}{\mathbb{G}_{m}^{2}}\right]}\notag\\
&
\frac{1}{2}\cdot \left[\frac{\operatorname{F}(1,2,n+k)}{\mathbb{G}_{m}}\right]-\cancel{\frac{1}{2}\cdot \left[\frac{\mathbb{P}(\operatorname{H}^{0}(\mathcal{O}_{\mathbb{P}^{1}}(n+k-1)))}{\mathbb{G}_{m}^{2}}\right]}\notag\\
&
+\frac{1}{2}\cdot \left[\frac{\mathbb{P}(\operatorname{H}^{0}(\mathcal{O}_{\mathbb{P}^{1}}(n+k-1)))}{\mathbb{G}_{m}}\right]
\end{align}
Now use Definition \ref{chichi-migi} and write:
\begin{align}\label{sagkosh}
&
\left[\frac{\operatorname{G}(2,n+k)}{\mathbb{G}_{m}}\right]=\chi(\operatorname{G}(2,n+k))\cdot \left[\frac{\operatorname{Spec}(\mathbb{C})}{\mathbb{G}_{m}}\right]
\end{align}
and
\begin{align}\label{the-factor}
&
\left[\frac{\operatorname{F}(1,2,n+k)}{\mathbb{G}_{m}}\right]=\chi(\operatorname{F}(1,2,n+k))\cdot \left[\frac{\operatorname{Spec}(\mathbb{C})}{\mathbb{G}_{m}}\right]\notag\\
&
=\chi(\mathbb{P}^{1})\cdot\chi(\operatorname{G}(2,n+k))\cdot \left[\frac{\operatorname{Spec}(\mathbb{C})}{\mathbb{G}_{m}}\right]=2\cdot \chi(\operatorname{G}(2,n+k))\cdot \left[\frac{\operatorname{Spec}(\mathbb{C})}{\mathbb{G}_{m}}\right]
\end{align}
where the second equality is due to the fact that the topological Euler characteristic of a vector bundle over a base variety is equal to the Euler characteristic of its fibers times the Euler characteristic of the base. By \eqref{sagdast} and \eqref{the-factor} we obtain:
\begin{align}
&
-\frac{1}{2}\cdot \left[\frac{\mathbb{P}(\operatorname{H}^{0}(\mathcal{O}_{\mathbb{P}^{1}}(n+k-1)))}{\mathbb{G}_{m}}\right]+\chi(\operatorname{G}(2,n+k))\cdot \left[\frac{\operatorname{Spec}(\mathbb{C})}{\mathbb{G}_{m}}\right]\notag\\
&
-2\cdot \frac{1}{2}\cdot \chi(\operatorname{G}(2,n+k))\cdot \left[\frac{\operatorname{Spec}(\mathbb{C})}{\mathbb{G}_{m}}\right]\notag\\
&
=-\frac{1}{2}\cdot \left[\frac{\mathbb{P}(\operatorname{H}^{0}(\mathcal{O}_{\mathbb{P}^{1}}(n+k-1)))}{\mathbb{G}_{m}}\right]=-\frac{1}{2}\chi(\mathbb{P}(\operatorname{H}^{0}(\mathcal{O}_{\mathbb{P}^{1}}(n+k-1))))\cdot \left[\frac{\operatorname{Spec}(\mathbb{C})}{\mathbb{G}_{m}}\right]\notag\\
&
=-\frac{1}{2}(n+k)\cdot\left[\frac{\operatorname{Spec}(\mathbb{C})}{\mathbb{G}_{m}}\right].
\end{align}
\subsection{Computation of the invariant}\label{factor-1}
Now apply the Lie algebra morphism $\tilde{\Psi}^{\mathcal{B}_{p}}$ in Definition \ref{13.11} to $\bar{\epsilon}^{([\mathbb{P}^{1}],2)}(\tilde{\tau})$. By definition:
\begin{align}
&
\tilde{\Psi}^{\mathcal{B}_{p}}(\bar{\epsilon}^{([\mathbb{P}^{1}],2)}(\tilde{\tau}))=\chi^{na}(-\frac{1}{2}(n+k)\cdot\left[\frac{\operatorname{Spec}(\mathbb{C})}{\mathbb{G}_{m}}\right],(\mu\circ i_{2})^{*}\nu_{\mathfrak{M}^{(0,2)}_{\mathcal{B}_{p}}})\tilde{\lambda}^{([\mathbb{P}^{1}],2)}.\notag\\
\end{align} 
Note that by Equation \eqref{stack-1}, $\mathfrak{M}^{([\mathbb{P}^{1}],2)}_{\mathcal{B}_{p},ss}(\tilde{\tau})=\left[\frac{\mathbb{P}( \operatorname{H}^{0}((\mathcal{O}_{\mathbb{P}^{1}}(n+k-1))^{\oplus 2})}{GL_{2}(\mathbb{C})}\right]$ and hence $\left[\frac{\operatorname{Spec}(\mathbb{C})}{\mathbb{G}_{m}}\right]$ has relative dimension $-1-(2n+2k-5)=4-2n-2k$ over $\mathfrak{M}^{([\mathbb{P}^{1}],2)}_{\mathcal{B}_{p},ss}(\tilde{\tau})$. Moreover, $\left[\frac{\operatorname{Spec}(\mathbb{C})}{\mathbb{G}_{m}}\right]$ is given by  a single point with Behrend's multiplicity $-1$ and $$(\mu\circ i_{2})^{*}\nu_{\mathfrak{M}^{(0,2)}_{\mathcal{B}_{p}}})\tilde{\lambda}^{([\mathbb{P}^{1}],2)}=(-1)^{4-2n-2k}\cdot\nu_{\left[\frac{\operatorname{Spec}(\mathbb{C})}{\mathbb{G}_{m}}\right]}=\nu_{\left[\frac{\operatorname{Spec}(\mathbb{C})}{\mathbb{G}_{m}}\right]},$$therefore: 
\begin{align}
&
\tilde{\Psi}^{\mathcal{B}_{p}}(\bar{\epsilon}^{([\mathbb{P}^{1}],2)}(\tilde{\tau}))=\chi^{na}\left(-\frac{1}{2}(n+k)\cdot\left[\frac{\operatorname{Spec}(\mathbb{C})}{\mathbb{G}_{m}}\right],\nu_{\left[\frac{\operatorname{Spec}(\mathbb{C})}{\mathbb{G}_{m}}\right]} \right)\tilde{\lambda}^{([\mathbb{P}^{1}],2)}=(-1)^{1}\cdot \frac{-1}{2}(n+k)\cdot\tilde{\lambda}^{([\mathbb{P}^{1}],2)}.\notag\\
\end{align}
Finally by Definition \ref{Bp-ss-defn} we obtain:
\begin{equation}\label{first-answer}
\textbf{B}^{ss}_{p}(X,\beta,2,\tilde{\tau})=\frac{1}{2}(n+k).
\end{equation}
Note that a simple calculation shows that substituting $([\mathbb{P}^{1}],2)$ for $(\beta,2)$ in \cite[Equation 5.2]{a41} would give the same answer as in \eqref{first-answer}. Hence our result is compatible with wallcrossing calculations.

To summarize, in section \ref{compute-example} we introduced our strategy of direct computations over an example where $r=2$ and $\beta$ was given by the irreducible class $[\mathbb{P}^{1}]$. Our strategy involved computing the weighted Euler characteristic of the right hand side of Equation \eqref{compute-it}. After carefully analyzing the stacky structure of the moduli space of objects of type $([\mathbb{P}^{1}],2)$ (Section \ref{stacky-structure}), first we computed the second summand on the right hand side of Equation \eqref{compute-it} (sections \ref{first-d1d1} and \ref{second-d1d1}). Then, by the stratification of the original moduli stack, we showed that the first summand on the right hand side of Equation \eqref{compute-it} is written as a sum of the characteristic stack functions of the strata in a suitable way, such that essnetially the difference of the first and second summands on the right hand side of \eqref{compute-it} turned out to be given as a sum of stack functions supported over virtual indecomposables. This enabled us to apply the Lie algebra morphism defined in Definition \ref{13.11} to both sides of Equation \eqref{compute-it} and obtain the final answer in Equation \eqref{first-answer}. 
\section{Direct calculations in general cases} \label{sec-wall}
In this section we extend the computational strategy in section \ref{compute-example} to one level of generalization further, i.e. to the case where $r=2$, however $\beta$ can have the possibility of being given by a reducible class. Eventually the final generalization, which is to consider semistable pairs with rank $>2$, follows the same strategy as will be described in detail below. 

\begin{remark}
As was seen above, a key condition to hold for our strategy to work, is that the strata involved in our stratification are locally closed and disjoint from one another. In order to show that such stratification is possible for general cases, we start below by assuming that an object $E_{2}:\mathcal{O}^{2}_{X}(-n)\to F_{2}$, with $F_{2}$ being a semistable sheaf with reducible Chern character, is decomposable into rank 1 objects $E_{1}:=\mathcal{O}_{X}(-n)\to F_{1}$ and  $E_{3}:=\mathcal{O}_{X}(-n)\to F_{3}$, where $F_{1},F_{3}$ are stable sheaves. We then show that the moduli space parameterizing $E_{2}$ can be decomposed into a disjoint union of locally closed strata, depending on how $E_{2}$ is produced by extensions of $E_{1}, E_{3}$. In other words, we will show below that the extensions of the form $0\to E_{1}\to E_{2}\to E_{3}\to 0$ (depending on being split or non-split), or the ones with the order of the extension flipped as in $0\to E_{3}\to E_{2}\to E_{1}\to 0$ are all locally closed and disjoint from one another. 
\end{remark}

\begin{assumption}\label{big-assumption} 
Throughout this section we assume that a $\tilde{\tau}$-semistable object $(F,V,\phi_{V})\in \mathcal{B}_{p}$ of type $(\beta,2)$ has the property that $\beta$ is either indecomposable or  it satisfies the condition that if $\beta=\beta_{k}+\beta_{l}$ (i.e if $\beta$ is decomposable) then $\beta_{k}=[F_{k}]$ and $\beta_{l}=[F_{l}]$ such that $F_{k}$ and $F_{l}$ are $\tau$-stable sheaves with fixed reduced Hilbert polynomial $p$. In other words, $\beta$ \textbf{can not} be decomposed into smaller classes whose associated sheaves are not $\tau$-stable.
\end{assumption}
\begin{lemma}\label{hom-eval}
Let $\mathfrak{S}_{s}^{(\beta_{k},1)}$ and $\mathfrak{S}_{s}^{(\beta_{l},1)}$ (for some $\beta_{k}$ and $\beta_{l}$) denote the underlying schemes, as in Section \ref{scheme-beta} given as a bundle $\mathcal{P}$ over the $\tau$-stable locus  of the Quot scheme, $\mathcal{Q}^{s}\subset \mathcal{Q}$, parameterizing maps $\mathcal{O}_{X}(-n)\to F$ such that the Chern character of $F$ is given by $\beta_{k}$ and $\beta_{l}$ respectively, satisfying the Assumption \ref{big-assumption}. Then, given a tuple of objects, $(E_{1},E_{3})\in \mathfrak{S}_{s}^{(\beta_{k},1)}\times \mathfrak{S}_{s}^{(\beta_{l},1)}$, the following is true:$$\operatorname{Hom}(E_{3},E_{1})=\mathbb{A}^{1},$$if $E_{1}\cong E_{3}$ and$$\operatorname{Hom}(E_{3},E_{1})=0,$$if $E_{1}\ncong E_{3}$.
\end{lemma}
\begin{proof} This is mainly due to the assumption on the stability of the sheaves involved; Fix $E_{1}:=[\mathcal{O}_{X}(-n)\rightarrow F_{1}]\in \mathfrak{S}_{s}^{(\beta_{k},1)}$ and $E_{3}:=[\mathcal{O}_{X}(-n)\rightarrow F_{3}]\in \mathfrak{S}_{s}^{(\beta_{l},1)}$.  Consider a map $\psi:E_{1}\rightarrow E_{3}$. By definition, the morphism between $E_{1}$ and $E_{3}$ is defined by a morphism $\psi_{F}: F_{1}\rightarrow F_{3}$ which makes the following diagram commutative:
\begin{equation}\label{A1-0}
\begin{tikzpicture}
back line/.style={densely dotted}, 
cross line/.style={preaction={draw=white, -, 
line width=6pt}}] 
\matrix (m) [matrix of math nodes, 
row sep=3em, column sep=3.5em, 
text height=1.5ex, 
text depth=0.25ex]{ 
O_{X}(-n)&F_{1}\\
O_{X}(-n)&F_{3}\\};
\path[->]
(m-1-1) edge  (m-1-2)
(m-1-1) edge node [left] {$id_{\mathcal{O}_{X}}$}(m-2-1)
(m-1-2) edge node [right] {$\psi_{F}$} (m-2-2)
(m-2-1) edge (m-2-2);
\end{tikzpicture}
\end{equation}
By assumption $F_{1}$ and $F_{3}$ are given as stable sheaves with fixed reduced Hilbert polynomial $p$. Hence any nontrivial sheaf homomorphism from $F_{1}$ to $F_{3}$ is an isomorphism. Moreover by simplicity of stable sheaves any such nontrivial isomorphism is identified with $\mathbb{A}^{1}$. Now use the commutativity of the diagram in \eqref{A1-0}. 
\end{proof}
Given a reducible class $\beta$ there might be multiple ways to decompose it into underlying smaller classes $\beta_{k}, \beta_{l}$. We will now show that having fixed a class $\beta$ and moving from one decomposition type, say $\beta=\beta_{k}+\beta_{l}$, to another given by $\beta=\beta_{k'}+\beta_{l'}$, will correspond to moving from one stratum to another in the ambient moduli space of $E_{2}$, such that the corresponding strata are disjoint from one another; 
\begin{lemma}\label{non-isom}
Fix $\beta_{k}$ and $\beta_{l}$ such that $\beta_{k}+\beta_{l}=\beta$ and $\beta$, $\beta_{k}$ and $\beta_{l}$ satisfy the condition in Assumption \ref{big-assumption}. Consider an object $E_{2}$ given as an element of $\mathfrak{S}^{(\beta,2)}_{ss}$ (by this notation we mean that an object $E_{2}$ may consist of a semistable sheaf with reducible class $\beta$) which fits into a non-split extension of objects$$0\rightarrow E_{1}\rightarrow E_{2}\rightarrow E_{3}\rightarrow 0,$$such that $E_{1}$ and $E_{3}$ are given by the elements of $\mathfrak{S}_{s}^{(\beta_{k},1)}$ and $\mathfrak{S}_{s}^{(\beta_{l},1)}$ respectively. Now suppose $E'_{1}$ and $E'_{3}$ are objects with classes $(\beta_{k'},1)$ and $(\beta_{l'},1)$ respectively such that $\beta_{k'}+\beta_{l'}=\beta $,  $\beta_{k'}\neq \beta_{k}$, $\beta_{l'}\neq \beta_{l}$ and furthermore, $\beta$, $\beta'_{k}$ and $\beta'_{l}$ also satisfy the condition in Assumption \ref{big-assumption}. Then it is true that $E_{2}$ \textbf{can not} be given as an extension$$0\rightarrow E'_{1}\rightarrow E_{2}\rightarrow E'_{3}\rightarrow 0.$$
\end{lemma}
\begin{proof} If $E_{2}$ is given by both extensions then we obtain a map between the two short exact sequences: 
\begin{equation}
\begin{tikzpicture}
back line/.style={densely dotted}, 
cross line/.style={preaction={draw=white, -, 
line width=6pt}}] 
\matrix (m) [matrix of math nodes, 
row sep=1em, column sep=2.em, 
text height=1.5ex, 
text depth=0.25ex]{ 
0&E_{1}&E_{2}&E_{3}&0\\
0&E'_{1}&E_{2}&E'_{3}&0\\};
\path[->]
(m-1-1) edge  (m-1-2)
(m-1-2) edge node [above] {$\iota$} (m-1-3)
(m-1-3) edge (m-1-4) edge node [right] {$\cong$} (m-2-3)
(m-1-4) edge (m-1-5)
(m-2-1) edge (m-2-2)
(m-2-2) edge (m-2-3)
(m-2-3) edge node [below] {$p$} (m-2-4)
(m-2-4) edge (m-2-5);
\end{tikzpicture}.
\end{equation}  
Hence we obtain a map $p\circ \iota:E_{1}\rightarrow E'_{3}$. Since by assumption $\beta_{k}\neq \beta_{k'}$ (this means $\beta_{l}\neq \beta_{l'}$ because $\beta_{k}+\beta_{l}=\beta_{k'}+\beta_{l'}=\beta$), by Lemma \ref{hom-eval} we conclude that $p\circ \iota$ is the zero map. Hence $p\circ \iota$ factors through the map $\iota'\circ g$ in the following diagram 
\begin{equation}\label{khol}
\begin{tikzpicture}
back line/.style={densely dotted}, 
cross line/.style={preaction={draw=white, -, 
line width=6pt}}] 
\matrix (m) [matrix of math nodes, 
row sep=1em, column sep=2.5em, 
text height=1.5ex, 
text depth=0.25ex]{ 
0&E_{1}&E_{2}&E_{3}&0\\
0&E'_{1}&E_{2}&E'_{3}&0\\};
\path[->]
(m-1-1) edge  (m-1-2) 
(m-1-2) edge node [above] {$\iota$} (m-1-3) edge node [right] {$g$} (m-2-2)
(m-1-3) edge (m-1-4) edge node [right] {$\cong$} (m-2-3)
(m-1-4) edge (m-1-5)
(m-2-1) edge (m-2-2)
(m-2-2) edge node [above] {$\iota'$} (m-2-3)
(m-2-3) edge node [below] {$p$} (m-2-4)
(m-2-4) edge (m-2-5);
\end{tikzpicture}.
\end{equation}  
Since $E_{1}\ncong E'_{1}$, by Lemma \ref{hom-eval}, $g$ is the zero map. By considering the left commutative square in \eqref{khol} we obtain a contradiction, since the image of $E_{1}$ in $E_{2}$ can not always be zero. 
\end{proof}
Now we show that flipping the order of non-split extensions will again induce disjoint parameterizing stratum in the ambient moduli space; 
\begin{lemma}\label{isom-nonisom}
Fix $\beta_{k}$ and $\beta_{l}$ such that $\beta_{k}+\beta_{l}=\beta$ as in Assumption \ref{big-assumption}. Now Consider $E_{2}\in \mathfrak{S}^{(\beta,2)}_{ss}$ which fits into a non-split extension$$0\rightarrow E_{1}\rightarrow E_{2}\rightarrow E_{3}\rightarrow 0,$$where $E_{1}$ and $E_{3}$ are given by the elements of $\mathfrak{S}_{s}^{(\beta_{k},1)}$ and $\mathfrak{S}_{s}^{(\beta_{l},1)}$ respectively. Then, the object $E_{2}$ can not be given as an extension$$0\rightarrow E'_{1}\rightarrow E_{2}\rightarrow E'_{3}\rightarrow 0,$$where $[E'_{1}]=(\beta_{l},1)$ and $[E'_{3}]=(\beta_{k},1)$. 
\end{lemma}
\begin{proof}. We prove by contradiction. Assume $E_{2}$ fits in both exact sequences. We obtain a map between the two sequences
 \begin{equation}
\begin{tikzpicture}
back line/.style={densely dotted}, 
cross line/.style={preaction={draw=white, -, 
line width=6pt}}] 
\matrix (m) [matrix of math nodes, 
row sep=1em, column sep=2.em, 
text height=1.5ex, 
text depth=0.25ex]{ 
0&E_{1}&E_{2}&E_{3}&0\\
0&E'_{1}&E_{2}&E'_{3}&0\\};
\path[->]
(m-1-1) edge  (m-1-2)
(m-1-2) edge node [above] {$\iota$} (m-1-3)
(m-1-3) edge (m-1-4) edge node [right] {$\cong$} (m-2-3)
(m-1-4) edge (m-1-5)
(m-2-1) edge (m-2-2)
(m-2-2) edge (m-2-3)
(m-2-3) edge node [below] {$p$} (m-2-4)
(m-2-4) edge (m-2-5);
\end{tikzpicture}.
\end{equation}  
Similar to before, we obtain a map $p\circ \iota:E_{1}\rightarrow E'_{3}$. Since $E_{1}$ and $E'_{3}$ have equal classes, we need to consider two possibilities. First when $E_{1}\cong E'_{3}$ and second when $E_{1}\ncong E'_{3}$.\\
If $E_{1}\cong E'_{3}$, then the image of the map $p\circ \iota$ is either multiple of identity over $E_{1}$ or the zero map. If the former case happens it means that the exact sequence on the first row is split, contradicting the assumption that $E_{2}$ fits in a non-split exact sequence. If the map is given by the zero map, then we can apply the argument in Lemma \ref{non-isom} and obtain a contradiction. If $E_{1}\ncong E'_{3}$ then the map $p\circ \iota$ is the zero map and the proof similarly reduces to argument in proof of Lemma \ref{non-isom}.\end{proof}
\section{$GL_{2}(\mathbb{C})$-invariant stratification}
Now we are ready to introduce a $GL_{2}(\mathbb{C})$-invariant stratification of $\mathfrak{M}^{(\beta,2)}_{\mathcal{B}_{p},ss}(\tilde{\tau})$ for $\beta$ satisfying the condition in Assumption \ref{big-assumption}. Note that by Theorem \ref{theorem81} and Definition \ref{stable-locus}$$\mathfrak{M}^{(\beta,2)}_{ss,\mathcal{B}^{\textbf{R}}_{p}}=\left[\frac{\mathfrak{S}_{ss}^{(\beta,2)}}{GL(V)}\right]\,\,\,\, \text{and}\,\,\,\,\mathfrak{M}^{(\beta,2)}_{\mathcal{B}_{p},ss}(\tilde{\tau})\subset \mathfrak{M}^{(\beta,2)}_{\mathcal{B}_{p},ss}=\bigg[\frac{\mathfrak{M}^{(\beta,2)}_{ss,\mathcal{B}^{\textbf{R}}_{p}}}{GL_{2}(\mathbb{C})}\bigg].$$which shows us that a suitable stratification of $\mathfrak{S}_{ss}^{(\beta,2)}$, after passing to the subsequent quotient stacks and restricting to the $\tilde{\tau}$-semistable locus, induces a $GL_{2}(\mathbb{C})$-invariant stratification of $\mathfrak{M}^{(\beta,2)}_{\mathcal{B}_{p},ss}(\tilde{\tau})$. The cartoon below explains the intuition behind our strategy:

\ifx\JPicScale\undefined\def\JPicScale{1}\fi
\unitlength \JPicScale mm
\begin{picture}(165.18,85)(-10,0)
\linethickness{0.3mm}
\multiput(30,75)(0.12,0.12){83}{\line(1,0){0.12}}
\linethickness{0.3mm}
\put(30,75){\line(1,0){35}}
\linethickness{0.3mm}
\multiput(65,75)(0.12,0.12){83}{\line(1,0){0.12}}
\linethickness{0.3mm}
\put(40,85){\line(1,0){35}}
\linethickness{0.3mm}
\put(80,80){\line(1,0){10}}
\put(90,80){\vector(1,0){0.12}}
\linethickness{0.3mm}
\multiput(110,75)(0.12,0.12){83}{\line(1,0){0.12}}
\linethickness{0.3mm}
\put(120,85){\line(1,0){5}}
\linethickness{0.3mm}
\multiput(115,75)(1.33,1.33){8}{\multiput(0,0)(0.11,0.11){6}{\line(1,0){0.11}}}
\linethickness{0.3mm}
\put(110,75){\line(1,0){5}}
\linethickness{0.3mm}
\multiput(120,75)(1.33,1.33){8}{\multiput(0,0)(0.11,0.11){6}{\line(1,0){0.11}}}
\linethickness{0.3mm}
\put(130,85){\line(1,0){5}}
\linethickness{0.3mm}
\multiput(125,75)(1.33,1.33){8}{\multiput(0,0)(0.11,0.11){6}{\line(1,0){0.11}}}
\linethickness{0.3mm}
\put(120,75){\line(1,0){5}}
\linethickness{0.3mm}
\multiput(130,75)(1.33,1.33){8}{\multiput(0,0)(0.11,0.11){6}{\line(1,0){0.11}}}
\linethickness{0.3mm}
\put(140,85){\line(1,0){5}}
\linethickness{0.3mm}
\multiput(135,75)(1.33,1.33){8}{\multiput(0,0)(0.11,0.11){6}{\line(1,0){0.11}}}
\linethickness{0.3mm}
\put(130,75){\line(1,0){5}}
\linethickness{0.3mm}
\put(50,60){\line(0,1){10}}
\put(50,60){\vector(0,-1){0.12}}
\linethickness{0.3mm}
\multiput(64.58,40.26)(0.21,-0.13){2}{\line(1,0){0.21}}
\multiput(64.15,40.52)(0.21,-0.13){2}{\line(1,0){0.21}}
\multiput(63.72,40.77)(0.22,-0.13){2}{\line(1,0){0.22}}
\multiput(63.28,41.02)(0.22,-0.12){2}{\line(1,0){0.22}}
\multiput(62.84,41.26)(0.22,-0.12){2}{\line(1,0){0.22}}
\multiput(62.4,41.49)(0.22,-0.12){2}{\line(1,0){0.22}}
\multiput(61.96,41.71)(0.22,-0.11){2}{\line(1,0){0.22}}
\multiput(61.51,41.93)(0.22,-0.11){2}{\line(1,0){0.22}}
\multiput(61.06,42.14)(0.23,-0.11){2}{\line(1,0){0.23}}
\multiput(60.6,42.35)(0.23,-0.1){2}{\line(1,0){0.23}}
\multiput(60.14,42.55)(0.23,-0.1){2}{\line(1,0){0.23}}
\multiput(59.68,42.74)(0.23,-0.1){2}{\line(1,0){0.23}}
\multiput(59.21,42.92)(0.23,-0.09){2}{\line(1,0){0.23}}
\multiput(58.75,43.09)(0.47,-0.18){1}{\line(1,0){0.47}}
\multiput(58.28,43.26)(0.47,-0.17){1}{\line(1,0){0.47}}
\multiput(57.8,43.42)(0.47,-0.16){1}{\line(1,0){0.47}}
\multiput(57.33,43.58)(0.47,-0.15){1}{\line(1,0){0.47}}
\multiput(56.85,43.73)(0.48,-0.15){1}{\line(1,0){0.48}}
\multiput(56.37,43.86)(0.48,-0.14){1}{\line(1,0){0.48}}
\multiput(55.89,44)(0.48,-0.13){1}{\line(1,0){0.48}}
\multiput(55.41,44.12)(0.48,-0.12){1}{\line(1,0){0.48}}
\multiput(54.92,44.24)(0.49,-0.12){1}{\line(1,0){0.49}}
\multiput(54.44,44.35)(0.49,-0.11){1}{\line(1,0){0.49}}
\multiput(53.95,44.45)(0.49,-0.1){1}{\line(1,0){0.49}}
\multiput(53.46,44.55)(0.49,-0.1){1}{\line(1,0){0.49}}
\multiput(52.97,44.63)(0.49,-0.09){1}{\line(1,0){0.49}}
\multiput(52.47,44.71)(0.49,-0.08){1}{\line(1,0){0.49}}
\multiput(51.98,44.79)(0.49,-0.07){1}{\line(1,0){0.49}}
\multiput(51.48,44.85)(0.5,-0.06){1}{\line(1,0){0.5}}
\multiput(50.99,44.91)(0.5,-0.06){1}{\line(1,0){0.5}}
\multiput(50.49,44.96)(0.5,-0.05){1}{\line(1,0){0.5}}
\multiput(49.99,45)(0.5,-0.04){1}{\line(1,0){0.5}}
\multiput(49.5,45.03)(0.5,-0.03){1}{\line(1,0){0.5}}
\multiput(49,45.06)(0.5,-0.03){1}{\line(1,0){0.5}}
\multiput(48.5,45.08)(0.5,-0.02){1}{\line(1,0){0.5}}
\multiput(48,45.09)(0.5,-0.01){1}{\line(1,0){0.5}}
\multiput(47.5,45.1)(0.5,-0){1}{\line(1,0){0.5}}
\multiput(47,45.09)(0.5,0){1}{\line(1,0){0.5}}
\multiput(46.5,45.08)(0.5,0.01){1}{\line(1,0){0.5}}
\multiput(46,45.06)(0.5,0.02){1}{\line(1,0){0.5}}
\multiput(45.5,45.03)(0.5,0.03){1}{\line(1,0){0.5}}
\multiput(45.01,45)(0.5,0.03){1}{\line(1,0){0.5}}
\multiput(44.51,44.96)(0.5,0.04){1}{\line(1,0){0.5}}
\multiput(44.01,44.91)(0.5,0.05){1}{\line(1,0){0.5}}
\multiput(43.52,44.85)(0.5,0.06){1}{\line(1,0){0.5}}
\multiput(43.02,44.79)(0.5,0.06){1}{\line(1,0){0.5}}
\multiput(42.53,44.71)(0.49,0.07){1}{\line(1,0){0.49}}
\multiput(42.03,44.63)(0.49,0.08){1}{\line(1,0){0.49}}
\multiput(41.54,44.55)(0.49,0.09){1}{\line(1,0){0.49}}
\multiput(41.05,44.45)(0.49,0.1){1}{\line(1,0){0.49}}
\multiput(40.56,44.35)(0.49,0.1){1}{\line(1,0){0.49}}
\multiput(40.08,44.24)(0.49,0.11){1}{\line(1,0){0.49}}
\multiput(39.59,44.12)(0.49,0.12){1}{\line(1,0){0.49}}
\multiput(39.11,44)(0.48,0.12){1}{\line(1,0){0.48}}
\multiput(38.63,43.86)(0.48,0.13){1}{\line(1,0){0.48}}
\multiput(38.15,43.73)(0.48,0.14){1}{\line(1,0){0.48}}
\multiput(37.67,43.58)(0.48,0.15){1}{\line(1,0){0.48}}
\multiput(37.2,43.42)(0.47,0.15){1}{\line(1,0){0.47}}
\multiput(36.72,43.26)(0.47,0.16){1}{\line(1,0){0.47}}
\multiput(36.25,43.09)(0.47,0.17){1}{\line(1,0){0.47}}
\multiput(35.79,42.92)(0.47,0.18){1}{\line(1,0){0.47}}
\multiput(35.32,42.74)(0.23,0.09){2}{\line(1,0){0.23}}
\multiput(34.86,42.55)(0.23,0.1){2}{\line(1,0){0.23}}
\multiput(34.4,42.35)(0.23,0.1){2}{\line(1,0){0.23}}
\multiput(33.94,42.14)(0.23,0.1){2}{\line(1,0){0.23}}
\multiput(33.49,41.93)(0.23,0.11){2}{\line(1,0){0.23}}
\multiput(33.04,41.71)(0.22,0.11){2}{\line(1,0){0.22}}
\multiput(32.6,41.49)(0.22,0.11){2}{\line(1,0){0.22}}
\multiput(32.16,41.26)(0.22,0.12){2}{\line(1,0){0.22}}
\multiput(31.72,41.02)(0.22,0.12){2}{\line(1,0){0.22}}
\multiput(31.28,40.77)(0.22,0.12){2}{\line(1,0){0.22}}
\multiput(30.85,40.52)(0.22,0.13){2}{\line(1,0){0.22}}
\multiput(30.42,40.26)(0.21,0.13){2}{\line(1,0){0.21}}
\multiput(30,40)(0.21,0.13){2}{\line(1,0){0.21}}

\linethickness{0.3mm}

\linethickness{0.3mm}
\multiput(69.72,50.42)(0.13,-0.11){3}{\line(1,0){0.13}}
\multiput(69.33,50.73)(0.13,-0.1){3}{\line(1,0){0.13}}
\multiput(68.93,51.03)(0.13,-0.1){3}{\line(1,0){0.13}}
\multiput(68.53,51.32)(0.2,-0.15){2}{\line(1,0){0.2}}
\multiput(68.13,51.61)(0.2,-0.14){2}{\line(1,0){0.2}}
\multiput(67.71,51.89)(0.21,-0.14){2}{\line(1,0){0.21}}
\multiput(67.3,52.15)(0.21,-0.13){2}{\line(1,0){0.21}}
\multiput(66.87,52.41)(0.21,-0.13){2}{\line(1,0){0.21}}
\multiput(66.44,52.66)(0.21,-0.12){2}{\line(1,0){0.21}}
\multiput(66.01,52.9)(0.22,-0.12){2}{\line(1,0){0.22}}
\multiput(65.57,53.13)(0.22,-0.12){2}{\line(1,0){0.22}}
\multiput(65.12,53.35)(0.22,-0.11){2}{\line(1,0){0.22}}
\multiput(64.67,53.57)(0.22,-0.11){2}{\line(1,0){0.22}}
\multiput(64.22,53.77)(0.23,-0.1){2}{\line(1,0){0.23}}
\multiput(63.76,53.96)(0.23,-0.1){2}{\line(1,0){0.23}}
\multiput(63.3,54.14)(0.23,-0.09){2}{\line(1,0){0.23}}
\multiput(62.83,54.31)(0.47,-0.17){1}{\line(1,0){0.47}}
\multiput(62.36,54.47)(0.47,-0.16){1}{\line(1,0){0.47}}
\multiput(61.89,54.63)(0.47,-0.15){1}{\line(1,0){0.47}}
\multiput(61.41,54.77)(0.48,-0.14){1}{\line(1,0){0.48}}
\multiput(60.93,54.9)(0.48,-0.13){1}{\line(1,0){0.48}}
\multiput(60.45,55.02)(0.48,-0.12){1}{\line(1,0){0.48}}
\multiput(59.96,55.13)(0.48,-0.11){1}{\line(1,0){0.48}}
\multiput(59.48,55.23)(0.49,-0.1){1}{\line(1,0){0.49}}
\multiput(58.99,55.32)(0.49,-0.09){1}{\line(1,0){0.49}}
\multiput(58.5,55.4)(0.49,-0.08){1}{\line(1,0){0.49}}
\multiput(58,55.46)(0.49,-0.07){1}{\line(1,0){0.49}}
\multiput(57.51,55.52)(0.49,-0.06){1}{\line(1,0){0.49}}
\multiput(57.02,55.57)(0.49,-0.05){1}{\line(1,0){0.49}}
\multiput(56.52,55.6)(0.5,-0.04){1}{\line(1,0){0.5}}
\multiput(56.02,55.63)(0.5,-0.03){1}{\line(1,0){0.5}}
\multiput(55.53,55.64)(0.5,-0.01){1}{\line(1,0){0.5}}
\multiput(55.03,55.65)(0.5,-0){1}{\line(1,0){0.5}}
\multiput(54.53,55.64)(0.5,0.01){1}{\line(1,0){0.5}}
\multiput(54.04,55.62)(0.5,0.02){1}{\line(1,0){0.5}}
\multiput(53.54,55.59)(0.5,0.03){1}{\line(1,0){0.5}}
\multiput(53.04,55.56)(0.5,0.04){1}{\line(1,0){0.5}}
\multiput(52.55,55.51)(0.49,0.05){1}{\line(1,0){0.49}}
\multiput(52.06,55.44)(0.49,0.06){1}{\line(1,0){0.49}}
\multiput(51.57,55.37)(0.49,0.07){1}{\line(1,0){0.49}}
\multiput(51.08,55.29)(0.49,0.08){1}{\line(1,0){0.49}}
\multiput(50.59,55.2)(0.49,0.09){1}{\line(1,0){0.49}}
\multiput(50.1,55.1)(0.49,0.1){1}{\line(1,0){0.49}}
\multiput(49.62,54.98)(0.48,0.11){1}{\line(1,0){0.48}}
\multiput(49.14,54.86)(0.48,0.12){1}{\line(1,0){0.48}}
\multiput(48.66,54.73)(0.48,0.13){1}{\line(1,0){0.48}}
\multiput(48.18,54.58)(0.48,0.14){1}{\line(1,0){0.48}}
\multiput(47.71,54.43)(0.47,0.15){1}{\line(1,0){0.47}}
\multiput(47.24,54.26)(0.47,0.16){1}{\line(1,0){0.47}}
\multiput(46.77,54.09)(0.47,0.17){1}{\line(1,0){0.47}}
\multiput(46.31,53.9)(0.23,0.09){2}{\line(1,0){0.23}}
\multiput(45.86,53.71)(0.23,0.1){2}{\line(1,0){0.23}}
\multiput(45.4,53.5)(0.23,0.1){2}{\line(1,0){0.23}}
\multiput(44.95,53.29)(0.22,0.11){2}{\line(1,0){0.22}}
\multiput(44.51,53.07)(0.22,0.11){2}{\line(1,0){0.22}}
\multiput(44.07,52.83)(0.22,0.12){2}{\line(1,0){0.22}}
\multiput(43.64,52.59)(0.22,0.12){2}{\line(1,0){0.22}}
\multiput(43.21,52.34)(0.21,0.13){2}{\line(1,0){0.21}}
\multiput(42.79,52.08)(0.21,0.13){2}{\line(1,0){0.21}}
\multiput(42.37,51.81)(0.21,0.14){2}{\line(1,0){0.21}}
\multiput(41.96,51.53)(0.21,0.14){2}{\line(1,0){0.21}}
\multiput(41.55,51.24)(0.2,0.14){2}{\line(1,0){0.2}}
\multiput(41.16,50.94)(0.2,0.15){2}{\line(1,0){0.2}}
\multiput(40.76,50.64)(0.13,0.1){3}{\line(1,0){0.13}}
\multiput(40.38,50.32)(0.13,0.1){3}{\line(1,0){0.13}}
\multiput(40,50)(0.13,0.11){3}{\line(1,0){0.13}}

\linethickness{0.3mm}

\linethickness{0.3mm}
\put(80,50){\line(1,0){10}}
\put(90,50){\vector(1,0){0.12}}
\linethickness{0.3mm}
\multiput(119.51,54.92)(0.49,0.08){1}{\line(1,0){0.49}}
\multiput(119.03,54.83)(0.49,0.09){1}{\line(1,0){0.49}}
\multiput(118.54,54.73)(0.48,0.1){1}{\line(1,0){0.48}}
\multiput(118.06,54.61)(0.48,0.12){1}{\line(1,0){0.48}}
\multiput(117.58,54.48)(0.48,0.13){1}{\line(1,0){0.48}}
\multiput(117.11,54.34)(0.47,0.14){1}{\line(1,0){0.47}}
\multiput(116.64,54.18)(0.47,0.16){1}{\line(1,0){0.47}}
\multiput(116.18,54.01)(0.46,0.17){1}{\line(1,0){0.46}}
\multiput(115.72,53.83)(0.23,0.09){2}{\line(1,0){0.23}}
\multiput(115.26,53.63)(0.23,0.1){2}{\line(1,0){0.23}}
\multiput(114.82,53.42)(0.22,0.1){2}{\line(1,0){0.22}}
\multiput(114.37,53.2)(0.22,0.11){2}{\line(1,0){0.22}}
\multiput(113.94,52.97)(0.22,0.12){2}{\line(1,0){0.22}}
\multiput(113.51,52.72)(0.21,0.12){2}{\line(1,0){0.21}}
\multiput(113.09,52.46)(0.21,0.13){2}{\line(1,0){0.21}}
\multiput(112.67,52.19)(0.21,0.13){2}{\line(1,0){0.21}}
\multiput(112.26,51.91)(0.2,0.14){2}{\line(1,0){0.2}}
\multiput(111.87,51.62)(0.2,0.15){2}{\line(1,0){0.2}}
\multiput(111.47,51.32)(0.13,0.1){3}{\line(1,0){0.13}}
\multiput(111.09,51)(0.13,0.1){3}{\line(1,0){0.13}}
\multiput(110.72,50.68)(0.12,0.11){3}{\line(1,0){0.12}}
\multiput(110.35,50.34)(0.12,0.11){3}{\line(1,0){0.12}}
\multiput(110,50)(0.12,0.11){3}{\line(1,0){0.12}}
\multiput(109.66,49.65)(0.11,0.12){3}{\line(0,1){0.12}}
\multiput(109.32,49.28)(0.11,0.12){3}{\line(0,1){0.12}}
\multiput(109,48.91)(0.11,0.12){3}{\line(0,1){0.12}}
\multiput(108.68,48.53)(0.1,0.13){3}{\line(0,1){0.13}}
\multiput(108.38,48.13)(0.1,0.13){3}{\line(0,1){0.13}}
\multiput(108.09,47.74)(0.15,0.2){2}{\line(0,1){0.2}}
\multiput(107.81,47.33)(0.14,0.2){2}{\line(0,1){0.2}}
\multiput(107.54,46.91)(0.13,0.21){2}{\line(0,1){0.21}}
\multiput(107.28,46.49)(0.13,0.21){2}{\line(0,1){0.21}}
\multiput(107.03,46.06)(0.12,0.21){2}{\line(0,1){0.21}}
\multiput(106.8,45.63)(0.12,0.22){2}{\line(0,1){0.22}}
\multiput(106.58,45.18)(0.11,0.22){2}{\line(0,1){0.22}}
\multiput(106.37,44.74)(0.1,0.22){2}{\line(0,1){0.22}}
\multiput(106.17,44.28)(0.1,0.23){2}{\line(0,1){0.23}}
\multiput(105.99,43.82)(0.09,0.23){2}{\line(0,1){0.23}}
\multiput(105.82,43.36)(0.17,0.46){1}{\line(0,1){0.46}}
\multiput(105.66,42.89)(0.16,0.47){1}{\line(0,1){0.47}}
\multiput(105.52,42.42)(0.14,0.47){1}{\line(0,1){0.47}}
\multiput(105.39,41.94)(0.13,0.48){1}{\line(0,1){0.48}}
\multiput(105.27,41.46)(0.12,0.48){1}{\line(0,1){0.48}}
\multiput(105.17,40.97)(0.1,0.48){1}{\line(0,1){0.48}}
\multiput(105.08,40.49)(0.09,0.49){1}{\line(0,1){0.49}}
\multiput(105,40)(0.08,0.49){1}{\line(0,1){0.49}}

\linethickness{0.3mm}
\multiput(124.51,54.92)(0.49,0.08){1}{\line(1,0){0.49}}
\multiput(124.03,54.83)(0.49,0.09){1}{\line(1,0){0.49}}
\multiput(123.54,54.73)(0.48,0.1){1}{\line(1,0){0.48}}
\multiput(123.06,54.61)(0.48,0.12){1}{\line(1,0){0.48}}
\multiput(122.58,54.48)(0.48,0.13){1}{\line(1,0){0.48}}
\multiput(122.11,54.34)(0.47,0.14){1}{\line(1,0){0.47}}
\multiput(121.64,54.18)(0.47,0.16){1}{\line(1,0){0.47}}
\multiput(121.18,54.01)(0.46,0.17){1}{\line(1,0){0.46}}
\multiput(120.72,53.83)(0.23,0.09){2}{\line(1,0){0.23}}
\multiput(120.26,53.63)(0.23,0.1){2}{\line(1,0){0.23}}
\multiput(119.82,53.42)(0.22,0.1){2}{\line(1,0){0.22}}
\multiput(119.37,53.2)(0.22,0.11){2}{\line(1,0){0.22}}
\multiput(118.94,52.97)(0.22,0.12){2}{\line(1,0){0.22}}
\multiput(118.51,52.72)(0.21,0.12){2}{\line(1,0){0.21}}
\multiput(118.09,52.46)(0.21,0.13){2}{\line(1,0){0.21}}
\multiput(117.67,52.19)(0.21,0.13){2}{\line(1,0){0.21}}
\multiput(117.26,51.91)(0.2,0.14){2}{\line(1,0){0.2}}
\multiput(116.87,51.62)(0.2,0.15){2}{\line(1,0){0.2}}
\multiput(116.47,51.32)(0.13,0.1){3}{\line(1,0){0.13}}
\multiput(116.09,51)(0.13,0.1){3}{\line(1,0){0.13}}
\multiput(115.72,50.68)(0.12,0.11){3}{\line(1,0){0.12}}
\multiput(115.35,50.34)(0.12,0.11){3}{\line(1,0){0.12}}
\multiput(115,50)(0.12,0.11){3}{\line(1,0){0.12}}
\multiput(114.66,49.65)(0.11,0.12){3}{\line(0,1){0.12}}
\multiput(114.32,49.28)(0.11,0.12){3}{\line(0,1){0.12}}
\multiput(114,48.91)(0.11,0.12){3}{\line(0,1){0.12}}
\multiput(113.68,48.53)(0.1,0.13){3}{\line(0,1){0.13}}
\multiput(113.38,48.13)(0.1,0.13){3}{\line(0,1){0.13}}
\multiput(113.09,47.74)(0.15,0.2){2}{\line(0,1){0.2}}
\multiput(112.81,47.33)(0.14,0.2){2}{\line(0,1){0.2}}
\multiput(112.54,46.91)(0.13,0.21){2}{\line(0,1){0.21}}
\multiput(112.28,46.49)(0.13,0.21){2}{\line(0,1){0.21}}
\multiput(112.03,46.06)(0.12,0.21){2}{\line(0,1){0.21}}
\multiput(111.8,45.63)(0.12,0.22){2}{\line(0,1){0.22}}
\multiput(111.58,45.18)(0.11,0.22){2}{\line(0,1){0.22}}
\multiput(111.37,44.74)(0.1,0.22){2}{\line(0,1){0.22}}
\multiput(111.17,44.28)(0.1,0.23){2}{\line(0,1){0.23}}
\multiput(110.99,43.82)(0.09,0.23){2}{\line(0,1){0.23}}
\multiput(110.82,43.36)(0.17,0.46){1}{\line(0,1){0.46}}
\multiput(110.66,42.89)(0.16,0.47){1}{\line(0,1){0.47}}
\multiput(110.52,42.42)(0.14,0.47){1}{\line(0,1){0.47}}
\multiput(110.39,41.94)(0.13,0.48){1}{\line(0,1){0.48}}
\multiput(110.27,41.46)(0.12,0.48){1}{\line(0,1){0.48}}
\multiput(110.17,40.97)(0.1,0.48){1}{\line(0,1){0.48}}
\multiput(110.08,40.49)(0.09,0.49){1}{\line(0,1){0.49}}
\multiput(110,40)(0.08,0.49){1}{\line(0,1){0.49}}

\linethickness{0.3mm}
\put(105,40){\line(1,0){5}}
\linethickness{0.3mm}
\put(120,55){\line(1,0){5}}
\linethickness{0.3mm}
\multiput(129.51,54.92)(0.49,0.08){1}{\line(1,0){0.49}}
\multiput(129.03,54.83)(0.49,0.09){1}{\line(1,0){0.49}}
\multiput(128.54,54.73)(0.48,0.1){1}{\line(1,0){0.48}}
\multiput(128.06,54.61)(0.48,0.12){1}{\line(1,0){0.48}}
\multiput(127.58,54.48)(0.48,0.13){1}{\line(1,0){0.48}}
\multiput(127.11,54.34)(0.47,0.14){1}{\line(1,0){0.47}}
\multiput(126.64,54.18)(0.47,0.16){1}{\line(1,0){0.47}}
\multiput(126.18,54.01)(0.46,0.17){1}{\line(1,0){0.46}}
\multiput(125.72,53.83)(0.23,0.09){2}{\line(1,0){0.23}}
\multiput(125.26,53.63)(0.23,0.1){2}{\line(1,0){0.23}}
\multiput(124.82,53.42)(0.22,0.1){2}{\line(1,0){0.22}}
\multiput(124.37,53.2)(0.22,0.11){2}{\line(1,0){0.22}}
\multiput(123.94,52.97)(0.22,0.12){2}{\line(1,0){0.22}}
\multiput(123.51,52.72)(0.21,0.12){2}{\line(1,0){0.21}}
\multiput(123.09,52.46)(0.21,0.13){2}{\line(1,0){0.21}}
\multiput(122.67,52.19)(0.21,0.13){2}{\line(1,0){0.21}}
\multiput(122.26,51.91)(0.2,0.14){2}{\line(1,0){0.2}}
\multiput(121.87,51.62)(0.2,0.15){2}{\line(1,0){0.2}}
\multiput(121.47,51.32)(0.13,0.1){3}{\line(1,0){0.13}}
\multiput(121.09,51)(0.13,0.1){3}{\line(1,0){0.13}}
\multiput(120.72,50.68)(0.12,0.11){3}{\line(1,0){0.12}}
\multiput(120.35,50.34)(0.12,0.11){3}{\line(1,0){0.12}}
\multiput(120,50)(0.12,0.11){3}{\line(1,0){0.12}}
\multiput(119.66,49.65)(0.11,0.12){3}{\line(0,1){0.12}}
\multiput(119.32,49.28)(0.11,0.12){3}{\line(0,1){0.12}}
\multiput(119,48.91)(0.11,0.12){3}{\line(0,1){0.12}}
\multiput(118.68,48.53)(0.1,0.13){3}{\line(0,1){0.13}}
\multiput(118.38,48.13)(0.1,0.13){3}{\line(0,1){0.13}}
\multiput(118.09,47.74)(0.15,0.2){2}{\line(0,1){0.2}}
\multiput(117.81,47.33)(0.14,0.2){2}{\line(0,1){0.2}}
\multiput(117.54,46.91)(0.13,0.21){2}{\line(0,1){0.21}}
\multiput(117.28,46.49)(0.13,0.21){2}{\line(0,1){0.21}}
\multiput(117.03,46.06)(0.12,0.21){2}{\line(0,1){0.21}}
\multiput(116.8,45.63)(0.12,0.22){2}{\line(0,1){0.22}}
\multiput(116.58,45.18)(0.11,0.22){2}{\line(0,1){0.22}}
\multiput(116.37,44.74)(0.1,0.22){2}{\line(0,1){0.22}}
\multiput(116.17,44.28)(0.1,0.23){2}{\line(0,1){0.23}}
\multiput(115.99,43.82)(0.09,0.23){2}{\line(0,1){0.23}}
\multiput(115.82,43.36)(0.17,0.46){1}{\line(0,1){0.46}}
\multiput(115.66,42.89)(0.16,0.47){1}{\line(0,1){0.47}}
\multiput(115.52,42.42)(0.14,0.47){1}{\line(0,1){0.47}}
\multiput(115.39,41.94)(0.13,0.48){1}{\line(0,1){0.48}}
\multiput(115.27,41.46)(0.12,0.48){1}{\line(0,1){0.48}}
\multiput(115.17,40.97)(0.1,0.48){1}{\line(0,1){0.48}}
\multiput(115.08,40.49)(0.09,0.49){1}{\line(0,1){0.49}}
\multiput(115,40)(0.08,0.49){1}{\line(0,1){0.49}}

\linethickness{0.3mm}
\multiput(134.51,54.92)(0.49,0.08){1}{\line(1,0){0.49}}
\multiput(134.03,54.83)(0.49,0.09){1}{\line(1,0){0.49}}
\multiput(133.54,54.73)(0.48,0.1){1}{\line(1,0){0.48}}
\multiput(133.06,54.61)(0.48,0.12){1}{\line(1,0){0.48}}
\multiput(132.58,54.48)(0.48,0.13){1}{\line(1,0){0.48}}
\multiput(132.11,54.34)(0.47,0.14){1}{\line(1,0){0.47}}
\multiput(131.64,54.18)(0.47,0.16){1}{\line(1,0){0.47}}
\multiput(131.18,54.01)(0.46,0.17){1}{\line(1,0){0.46}}
\multiput(130.72,53.83)(0.23,0.09){2}{\line(1,0){0.23}}
\multiput(130.26,53.63)(0.23,0.1){2}{\line(1,0){0.23}}
\multiput(129.82,53.42)(0.22,0.1){2}{\line(1,0){0.22}}
\multiput(129.37,53.2)(0.22,0.11){2}{\line(1,0){0.22}}
\multiput(128.94,52.97)(0.22,0.12){2}{\line(1,0){0.22}}
\multiput(128.51,52.72)(0.21,0.12){2}{\line(1,0){0.21}}
\multiput(128.09,52.46)(0.21,0.13){2}{\line(1,0){0.21}}
\multiput(127.67,52.19)(0.21,0.13){2}{\line(1,0){0.21}}
\multiput(127.26,51.91)(0.2,0.14){2}{\line(1,0){0.2}}
\multiput(126.87,51.62)(0.2,0.15){2}{\line(1,0){0.2}}
\multiput(126.47,51.32)(0.13,0.1){3}{\line(1,0){0.13}}
\multiput(126.09,51)(0.13,0.1){3}{\line(1,0){0.13}}
\multiput(125.72,50.68)(0.12,0.11){3}{\line(1,0){0.12}}
\multiput(125.35,50.34)(0.12,0.11){3}{\line(1,0){0.12}}
\multiput(125,50)(0.12,0.11){3}{\line(1,0){0.12}}
\multiput(124.66,49.65)(0.11,0.12){3}{\line(0,1){0.12}}
\multiput(124.32,49.28)(0.11,0.12){3}{\line(0,1){0.12}}
\multiput(124,48.91)(0.11,0.12){3}{\line(0,1){0.12}}
\multiput(123.68,48.53)(0.1,0.13){3}{\line(0,1){0.13}}
\multiput(123.38,48.13)(0.1,0.13){3}{\line(0,1){0.13}}
\multiput(123.09,47.74)(0.15,0.2){2}{\line(0,1){0.2}}
\multiput(122.81,47.33)(0.14,0.2){2}{\line(0,1){0.2}}
\multiput(122.54,46.91)(0.13,0.21){2}{\line(0,1){0.21}}
\multiput(122.28,46.49)(0.13,0.21){2}{\line(0,1){0.21}}
\multiput(122.03,46.06)(0.12,0.21){2}{\line(0,1){0.21}}
\multiput(121.8,45.63)(0.12,0.22){2}{\line(0,1){0.22}}
\multiput(121.58,45.18)(0.11,0.22){2}{\line(0,1){0.22}}
\multiput(121.37,44.74)(0.1,0.22){2}{\line(0,1){0.22}}
\multiput(121.17,44.28)(0.1,0.23){2}{\line(0,1){0.23}}
\multiput(120.99,43.82)(0.09,0.23){2}{\line(0,1){0.23}}
\multiput(120.82,43.36)(0.17,0.46){1}{\line(0,1){0.46}}
\multiput(120.66,42.89)(0.16,0.47){1}{\line(0,1){0.47}}
\multiput(120.52,42.42)(0.14,0.47){1}{\line(0,1){0.47}}
\multiput(120.39,41.94)(0.13,0.48){1}{\line(0,1){0.48}}
\multiput(120.27,41.46)(0.12,0.48){1}{\line(0,1){0.48}}
\multiput(120.17,40.97)(0.1,0.48){1}{\line(0,1){0.48}}
\multiput(120.08,40.49)(0.09,0.49){1}{\line(0,1){0.49}}
\multiput(120,40)(0.08,0.49){1}{\line(0,1){0.49}}

\linethickness{0.3mm}
\put(115,40){\line(1,0){5}}
\linethickness{0.3mm}
\put(130,55){\line(1,0){5}}
\linethickness{0.3mm}
\multiput(139.51,54.92)(0.49,0.08){1}{\line(1,0){0.49}}
\multiput(139.03,54.83)(0.49,0.09){1}{\line(1,0){0.49}}
\multiput(138.54,54.73)(0.48,0.1){1}{\line(1,0){0.48}}
\multiput(138.06,54.61)(0.48,0.12){1}{\line(1,0){0.48}}
\multiput(137.58,54.48)(0.48,0.13){1}{\line(1,0){0.48}}
\multiput(137.11,54.34)(0.47,0.14){1}{\line(1,0){0.47}}
\multiput(136.64,54.18)(0.47,0.16){1}{\line(1,0){0.47}}
\multiput(136.18,54.01)(0.46,0.17){1}{\line(1,0){0.46}}
\multiput(135.72,53.83)(0.23,0.09){2}{\line(1,0){0.23}}
\multiput(135.26,53.63)(0.23,0.1){2}{\line(1,0){0.23}}
\multiput(134.82,53.42)(0.22,0.1){2}{\line(1,0){0.22}}
\multiput(134.37,53.2)(0.22,0.11){2}{\line(1,0){0.22}}
\multiput(133.94,52.97)(0.22,0.12){2}{\line(1,0){0.22}}
\multiput(133.51,52.72)(0.21,0.12){2}{\line(1,0){0.21}}
\multiput(133.09,52.46)(0.21,0.13){2}{\line(1,0){0.21}}
\multiput(132.67,52.19)(0.21,0.13){2}{\line(1,0){0.21}}
\multiput(132.26,51.91)(0.2,0.14){2}{\line(1,0){0.2}}
\multiput(131.87,51.62)(0.2,0.15){2}{\line(1,0){0.2}}
\multiput(131.47,51.32)(0.13,0.1){3}{\line(1,0){0.13}}
\multiput(131.09,51)(0.13,0.1){3}{\line(1,0){0.13}}
\multiput(130.72,50.68)(0.12,0.11){3}{\line(1,0){0.12}}
\multiput(130.35,50.34)(0.12,0.11){3}{\line(1,0){0.12}}
\multiput(130,50)(0.12,0.11){3}{\line(1,0){0.12}}
\multiput(129.66,49.65)(0.11,0.12){3}{\line(0,1){0.12}}
\multiput(129.32,49.28)(0.11,0.12){3}{\line(0,1){0.12}}
\multiput(129,48.91)(0.11,0.12){3}{\line(0,1){0.12}}
\multiput(128.68,48.53)(0.1,0.13){3}{\line(0,1){0.13}}
\multiput(128.38,48.13)(0.1,0.13){3}{\line(0,1){0.13}}
\multiput(128.09,47.74)(0.15,0.2){2}{\line(0,1){0.2}}
\multiput(127.81,47.33)(0.14,0.2){2}{\line(0,1){0.2}}
\multiput(127.54,46.91)(0.13,0.21){2}{\line(0,1){0.21}}
\multiput(127.28,46.49)(0.13,0.21){2}{\line(0,1){0.21}}
\multiput(127.03,46.06)(0.12,0.21){2}{\line(0,1){0.21}}
\multiput(126.8,45.63)(0.12,0.22){2}{\line(0,1){0.22}}
\multiput(126.58,45.18)(0.11,0.22){2}{\line(0,1){0.22}}
\multiput(126.37,44.74)(0.1,0.22){2}{\line(0,1){0.22}}
\multiput(126.17,44.28)(0.1,0.23){2}{\line(0,1){0.23}}
\multiput(125.99,43.82)(0.09,0.23){2}{\line(0,1){0.23}}
\multiput(125.82,43.36)(0.17,0.46){1}{\line(0,1){0.46}}
\multiput(125.66,42.89)(0.16,0.47){1}{\line(0,1){0.47}}
\multiput(125.52,42.42)(0.14,0.47){1}{\line(0,1){0.47}}
\multiput(125.39,41.94)(0.13,0.48){1}{\line(0,1){0.48}}
\multiput(125.27,41.46)(0.12,0.48){1}{\line(0,1){0.48}}
\multiput(125.17,40.97)(0.1,0.48){1}{\line(0,1){0.48}}
\multiput(125.08,40.49)(0.09,0.49){1}{\line(0,1){0.49}}
\multiput(125,40)(0.08,0.49){1}{\line(0,1){0.49}}

\linethickness{0.3mm}
\multiput(144.51,54.92)(0.49,0.08){1}{\line(1,0){0.49}}
\multiput(144.03,54.83)(0.49,0.09){1}{\line(1,0){0.49}}
\multiput(143.54,54.73)(0.48,0.1){1}{\line(1,0){0.48}}
\multiput(143.06,54.61)(0.48,0.12){1}{\line(1,0){0.48}}
\multiput(142.58,54.48)(0.48,0.13){1}{\line(1,0){0.48}}
\multiput(142.11,54.34)(0.47,0.14){1}{\line(1,0){0.47}}
\multiput(141.64,54.18)(0.47,0.16){1}{\line(1,0){0.47}}
\multiput(141.18,54.01)(0.46,0.17){1}{\line(1,0){0.46}}
\multiput(140.72,53.83)(0.23,0.09){2}{\line(1,0){0.23}}
\multiput(140.26,53.63)(0.23,0.1){2}{\line(1,0){0.23}}
\multiput(139.82,53.42)(0.22,0.1){2}{\line(1,0){0.22}}
\multiput(139.37,53.2)(0.22,0.11){2}{\line(1,0){0.22}}
\multiput(138.94,52.97)(0.22,0.12){2}{\line(1,0){0.22}}
\multiput(138.51,52.72)(0.21,0.12){2}{\line(1,0){0.21}}
\multiput(138.09,52.46)(0.21,0.13){2}{\line(1,0){0.21}}
\multiput(137.67,52.19)(0.21,0.13){2}{\line(1,0){0.21}}
\multiput(137.26,51.91)(0.2,0.14){2}{\line(1,0){0.2}}
\multiput(136.87,51.62)(0.2,0.15){2}{\line(1,0){0.2}}
\multiput(136.47,51.32)(0.13,0.1){3}{\line(1,0){0.13}}
\multiput(136.09,51)(0.13,0.1){3}{\line(1,0){0.13}}
\multiput(135.72,50.68)(0.12,0.11){3}{\line(1,0){0.12}}
\multiput(135.35,50.34)(0.12,0.11){3}{\line(1,0){0.12}}
\multiput(135,50)(0.12,0.11){3}{\line(1,0){0.12}}
\multiput(134.66,49.65)(0.11,0.12){3}{\line(0,1){0.12}}
\multiput(134.32,49.28)(0.11,0.12){3}{\line(0,1){0.12}}
\multiput(134,48.91)(0.11,0.12){3}{\line(0,1){0.12}}
\multiput(133.68,48.53)(0.1,0.13){3}{\line(0,1){0.13}}
\multiput(133.38,48.13)(0.1,0.13){3}{\line(0,1){0.13}}
\multiput(133.09,47.74)(0.15,0.2){2}{\line(0,1){0.2}}
\multiput(132.81,47.33)(0.14,0.2){2}{\line(0,1){0.2}}
\multiput(132.54,46.91)(0.13,0.21){2}{\line(0,1){0.21}}
\multiput(132.28,46.49)(0.13,0.21){2}{\line(0,1){0.21}}
\multiput(132.03,46.06)(0.12,0.21){2}{\line(0,1){0.21}}
\multiput(131.8,45.63)(0.12,0.22){2}{\line(0,1){0.22}}
\multiput(131.58,45.18)(0.11,0.22){2}{\line(0,1){0.22}}
\multiput(131.37,44.74)(0.1,0.22){2}{\line(0,1){0.22}}
\multiput(131.17,44.28)(0.1,0.23){2}{\line(0,1){0.23}}
\multiput(130.99,43.82)(0.09,0.23){2}{\line(0,1){0.23}}
\multiput(130.82,43.36)(0.17,0.46){1}{\line(0,1){0.46}}
\multiput(130.66,42.89)(0.16,0.47){1}{\line(0,1){0.47}}
\multiput(130.52,42.42)(0.14,0.47){1}{\line(0,1){0.47}}
\multiput(130.39,41.94)(0.13,0.48){1}{\line(0,1){0.48}}
\multiput(130.27,41.46)(0.12,0.48){1}{\line(0,1){0.48}}
\multiput(130.17,40.97)(0.1,0.48){1}{\line(0,1){0.48}}
\multiput(130.08,40.49)(0.09,0.49){1}{\line(0,1){0.49}}
\multiput(130,40)(0.08,0.49){1}{\line(0,1){0.49}}

\linethickness{0.3mm}
\put(125,40){\line(1,0){5}}
\linethickness{0.3mm}
\put(140,55){\line(1,0){5}}
\linethickness{0.3mm}
\put(50,25){\line(0,1){10}}
\put(50,25){\vector(0,-1){0.12}}
\linethickness{0.3mm}
\qbezier(35,5)(37.72,5.89)(39.97,7.83)
\qbezier(39.97,7.83)(42.22,9.77)(45,10)
\qbezier(45,10)(47.77,9.69)(50,7.5)
\qbezier(50,7.5)(52.23,5.31)(55,5)
\qbezier(55,5)(57.78,5.23)(60.03,7.17)
\qbezier(60.03,7.17)(62.28,9.11)(65,10)
\linethickness{0.3mm}
\qbezier(35,15)(37.72,15.89)(39.97,17.83)
\qbezier(39.97,17.83)(42.22,19.77)(45,20)
\qbezier(45,20)(47.77,19.69)(50,17.5)
\qbezier(50,17.5)(52.23,15.31)(55,15)
\qbezier(55,15)(57.78,15.23)(60.03,17.17)
\qbezier(60.03,17.17)(62.28,19.11)(65,20)
\linethickness{0.3mm}
\put(35,5){\line(0,1){10}}
\linethickness{0.3mm}
\put(65,10){\line(0,1){10}}
\linethickness{0.3mm}
\put(125,60){\line(0,1){10}}
\put(125,60){\vector(0,-1){0.12}}
\linethickness{0.3mm}
\put(125,25){\line(0,1){10}}
\put(125,25){\vector(0,-1){0.12}}
\linethickness{0.3mm}
\qbezier(110,5)(109.94,5.16)(109.75,6.91)
\qbezier(109.75,6.91)(109.57,8.65)(110,10)
\qbezier(110,10)(110.83,11.57)(112.5,12.5)
\qbezier(112.5,12.5)(114.17,13.43)(115,15)
\qbezier(115,15)(115.43,16.35)(115.25,18.09)
\qbezier(115.25,18.09)(115.06,19.84)(115,20)
%\qbezier(115,20)(115,20)(115,20)
%\qbezier(115,20)(115,20)(115,20)
\linethickness{0.3mm}
\qbezier(110,0)(109.94,0.16)(109.75,1.91)
\qbezier(109.75,1.91)(109.57,3.65)(110,5)
\qbezier(110,5)(110.83,6.57)(112.5,7.5)
\qbezier(112.5,7.5)(114.17,8.43)(115,10)
\qbezier(115,10)(115.43,11.35)(115.25,13.09)
\qbezier(115.25,13.09)(115.06,14.84)(115,15)
%\qbezier(115,15)(115,15)(115,15)
%\qbezier(115,15)(115,15)(115,15)
\linethickness{0.3mm}
\qbezier(120,5)(119.94,5.16)(119.75,6.91)
\qbezier(119.75,6.91)(119.57,8.65)(120,10)
\qbezier(120,10)(120.83,11.57)(122.5,12.5)
\qbezier(122.5,12.5)(124.17,13.43)(125,15)
\qbezier(125,15)(125.43,16.35)(125.25,18.09)
\qbezier(125.25,18.09)(125.06,19.84)(125,20)
%\qbezier(125,20)(125,20)(125,20)
%\qbezier(125,20)(125,20)(125,20)
\linethickness{0.3mm}
\qbezier(120,0)(119.94,0.16)(119.75,1.91)
\qbezier(119.75,1.91)(119.57,3.65)(120,5)
\qbezier(120,5)(120.83,6.57)(122.5,7.5)
\qbezier(122.5,7.5)(124.17,8.43)(125,10)
\qbezier(125,10)(125.43,11.35)(125.25,13.09)
\qbezier(125.25,13.09)(125.06,14.84)(125,15)
%\qbezier(125,15)(125,15)(125,15)
%\qbezier(125,15)(125,15)(125,15)
\linethickness{0.3mm}
\qbezier(130,5)(129.94,5.16)(129.75,6.91)
\qbezier(129.75,6.91)(129.57,8.65)(130,10)
\qbezier(130,10)(130.83,11.57)(132.5,12.5)
\qbezier(132.5,12.5)(134.17,13.43)(135,15)
\qbezier(135,15)(135.43,16.35)(135.25,18.09)
\qbezier(135.25,18.09)(135.06,19.84)(135,20)
%\qbezier(135,20)(135,20)(135,20)
%\qbezier(135,20)(135,20)(135,20)
\linethickness{0.3mm}
\qbezier(130,0)(129.94,0.16)(129.75,1.91)
\qbezier(129.75,1.91)(129.57,3.65)(130,5)
\qbezier(130,5)(130.83,6.57)(132.5,7.5)
\qbezier(132.5,7.5)(134.17,8.43)(135,10)
\qbezier(135,10)(135.43,11.35)(135.25,13.09)
\qbezier(135.25,13.09)(135.06,14.84)(135,15)
%\qbezier(135,15)(135,15)(135,15)
%\qbezier(135,15)(135,15)(135,15)
\put(15,80){\makebox(0,0)[cc]{$\mathfrak{S}^{(\beta,2)}_{ss}$}}

\put(10,50){\makebox(0,0)[cc]{$\left[\frac{\mathfrak{S}^{(\beta,2)}_{ss}}{GL(V)}\right]\cong \mathfrak{M}^{(\beta,2)}_{ss,\mathcal{B}^{\textbf{R}}_{p}}$}}

\put(10,25){\makebox(0,0)[cc]{$\mathfrak{M}^{(\beta,2)}_{\operatorname{\mathcal{B}_{p},ss}}(\tilde{\tau})\subset \mathfrak{M}^{(\beta,2)}_{\operatorname{\mathcal{B}_{p},ss}}\cong \left[\frac{\mathfrak{M}^{(\beta,2)}_{ss,\mathcal{B}^{\textbf{R}}_{p}}}{GL_{2}(\mathbb{C})}\right]$}}

\linethickness{0.3mm}
\multiput(65,40)(0.12,0.24){42}{\line(0,1){0.24}}
\linethickness{0.3mm}
\multiput(30,40)(0.12,0.12){83}{\line(1,0){0.12}}
\linethickness{0.3mm}
\put(80,15){\line(1,0){10}}
\put(90,15){\vector(1,0){0.12}}
\put(89,85){\makebox(0,0)[cc]{Stratification}}

\put(89,55){\makebox(0,0)[cc]{Induced Stratification}}

\put(89,20){\makebox(0,0)[cc]{Induced Stratification}}

\put(55,65){\makebox(0,0)[cc]{$\pi_{1}$}}

\put(55,30){\makebox(0,0)[cc]{$\pi_{2}$}}

\put(130,65){\makebox(0,0)[cc]{$\pi_{1}$}}

\put(130,30){\makebox(0,0)[cc]{$\pi_{2}$}}

\end{picture}
\section*{Strictly semistable objects in $\mathfrak{M}^{(\beta,2)}_{\operatorname{\mathcal{B}_{p},ss}}(\tilde{\tau})$ and further stratifications}
Now we pass to the subsequent quotients of $\mathfrak{S}^{(\beta,2)}_{ss}$ (the picture above) and study the stratification of $\tilde{\tau}$-semistable loci in $\mathfrak{M}^{(\beta,2)}_{\operatorname{\mathcal{B}_{p},ss}}$. These strata will be induced by the decomposition types of the objects involved. Note that in this section we provide only the description of the disjoint strata involved in our calculation and we assume that they are locally closed. The proof of locally closedness property of these strata is somewhat technical and so the interested reader may find the details in the Appendix \ref{appA}. 
\begin{defn}
Define $\mathfrak{M}^{(\beta,2)}_{st-ss}(\tilde{\tau})$ to be the parametrizing scheme of strictly $\tilde{\tau}$-semistable objects in $\mathcal{B}_{p}$ of type $(\beta,2)$ (for $\beta$ as in Assumption \ref{big-assumption}) which are obtained as an extension of two $\tilde{\tau}$-stable objects of rank 1. In other words an object $E_{2}\in \mathfrak{M}^{(\beta,2)}_{st-ss}(\tilde{\tau})$ with class $(\beta,2)$ fits in an exact sequence
\begin{equation}\label{e1e3}
0\rightarrow E_{1}\rightarrow E_{2}\rightarrow E_{3}\rightarrow 0,
\end{equation}
where $E_{1}$ and $E_{3}$ are $\tilde{\tau}$-stable objects with classes $(\beta_{k},1)$ and $(\beta_{l},1)$ respectively for some $\beta_{k}$ and $\beta_{l}$ such that $\beta_{k}+\beta_{l}=\beta$.
\end{defn}
\begin{remark}\label{equal-schemes}
Note that the existence of exact sequence \eqref{strict-semi} and the discussion in Section \ref{compute-example} shows that all objects in $\mathcal{B}_{p}$ of type $(\beta,2)$ are strictly $\tilde{\tau}$-semistable. Hence $\mathfrak{M}^{(\beta,2)}_{st-ss}(\tilde{\tau})\cong \mathfrak{M}^{(\beta,2)}_{\operatorname{\mathcal{B}_{p},ss}}(\tilde{\tau})$.
\end{remark}
If the extension in \eqref{e1e3} is non-split, then its automorphism group is obtained by $\operatorname{Hom}(E_{3},E_{1})\rtimes \mathbb{G}_{m}$ and if split, the automorphism group is obtained by $\operatorname{Hom}(E_{3},E_{1})\rtimes \mathbb{G}^{2}_{m}$ \cite[page 33]{a30}. We need these automorphism groups in order to compute the product ($*$) of the elements of the Ringe-Hall algebra. These elements are given as stack functions which parametrize objects of a given type (such as $(\beta_{k},1)$ or $(\beta_{l},1)$). Now assume that the exact sequence in \eqref{e1e3} is non-split and moreover $E_{1}\cong E_{3}$. In this case since a semistable rank 1 object in $\mathfrak{M}^{(\beta,1)}_{st-ss}(\tilde{\tau})$ is also stable, by the property of $\beta$ in Assumption \ref{big-assumption} and Lemma \ref{hom-eval}, $\operatorname{Hom}(E_{3},E_{1})\cong \mathbb{A}^{1}$ and the automorphism group of extension \eqref{e1e3} is obtained by $\mathbb{A}^{1}\rtimes \mathbb{G}_{m}$. Moreover if in \eqref{e1e3} $E_{1}\ncong E_{3}$, then by Lemma \ref{hom-eval}, $\operatorname{Hom}(E_{3},E_{1})=0$ and the automorphism group of extension \eqref{e1e3} is obtained by $\mathbb{G}_{m}$.

Following similar argument for case of split extensions, we find that the automorphism group  of the split extension is $\mathbb{A}^{1}\rtimes \mathbb{G}^{2}_{m}$ when $E_{1}\cong E_{3}$ and $\mathbb{G}^{2}_{m}$ when $E_{1}\ncong E_{3}$. Therefore, first we decompose $\mathfrak{M}^{(\beta,2)}_{st-ss}(\tilde{\tau})$ into two disjoint strata:$$\mathfrak{M}^{(\beta,2)}_{st-ss}(\tilde{\tau})=\mathfrak{M}^{(\beta,2)}_{n-sp}(\tilde{\tau})\bigsqcup\mathfrak{M}^{(\beta,2)}_{sp}(\tilde{\tau}).$$Here $\mathfrak{M}^{(\beta,2)}_{n-sp}(\tilde{\tau})$ and $\mathfrak{M}^{(\beta,2)}_{sp}(\tilde{\tau})$ stand for the strata over which the objects representing the elements in $\mathfrak{M}^{(\beta,2)}_{st-ss}(\tilde{\tau})$ are given by non-split and split extensions respectively. These strata are disjoint since an element $E_{2}\in \mathfrak{M}^{(\beta,2)}_{st-ss}(\tilde{\tau})$ can not be given by both split and non-split extensions. Now by lemmas \ref{non-isom} and \ref{isom-nonisom}, $\mathfrak{M}^{(\beta,2)}_{n-sp}(\tilde{\tau})$ can  further be stratified into a disjoint union over all possible $\beta_{k}$ and $\beta_{l}$:$$\mathfrak{M}^{(\beta,2)}_{n-sp}(\tilde{\tau})=\bigsqcup_{\beta_{k}+\beta_{l}=\beta}\mathfrak{M}^{(\beta_{k},\beta_{l},2)}_{n-sp}(\tilde{\tau}),$$where $\mathfrak{M}^{(\beta_{k},\beta_{l},2)}_{n-sp}(\tilde{\tau})$ stands for stratum over which $E_{2}$ is obtained by a non-split extension of $\tilde{\tau}$-stable objects $E_{1}$ by $E_{3}$ with fixed classes $(\beta_{k},1)$ and $(\beta_{l},1)$ respectively. Now take one of these strata $\mathfrak{M}^{(\beta_{k},\beta_{l},2)}_{n-sp}(\tilde{\tau})$ by fixing $\beta_{k}$ and $\beta_{l}$. We claim that one may decompose this stratum further into two disjoint strata, depending on the values of $\beta_{k},\beta_{l}$ respectively.
\begin{enumerate}
\item Define $\mathfrak{M}^{(\beta_{k},\beta_{l},2)}_{1}(\tilde{\tau})$ to be the parametrizing stack of objects $E_{2}\in \mathfrak{M}^{(\beta_{k},\beta_{l},2)}_{n-sp}(\tilde{\tau})$ such that there exists a non-split exact sequence 
$$0\rightarrow E_{1}\rightarrow E_{2}\rightarrow E_{3}\rightarrow 0,$$where $E_{1} \in \mathfrak{M}_{\mathcal{B}_{p},ss}^{(\beta_{k},1)}(\tilde{\tau})$ and $E_{3}\in \mathfrak{M}_{\mathcal{B}_{p},ss}^{(\beta_{l},1)}(\tilde{\tau})$ and $E_{1}\ncong E_{3}$.
\item Define $\mathfrak{M}^{(\beta_{k},\beta_{l},2)}_{2}(\tilde{\tau})$ to be the parametrizing stack of objects $E_{2}\in \mathfrak{M}^{(\beta_{k},\beta_{l},2)}_{n-sp}(\tilde{\tau})$ such that there exists a non-split exact sequence$$0\rightarrow E_{1}\rightarrow E_{2}\rightarrow E_{3}\rightarrow 0,$$where $E_{1},E_{3}\in \mathfrak{M}^{(\frac{\beta}{2},1)}_{\mathcal{B}_{p},ss}(\tilde{\tau})$ and $E_{1}\cong E_{3}$.
\end{enumerate}
\begin{lemma}\label{nonsp-ext}
There exists a stratification of $\mathfrak{M}^{(\beta_{k},\beta_{l},2)}_{n-sp}(\tilde{\tau})$:
\begin{align}\label{strata}
&
\mathfrak{M}^{(\beta_{k},\beta_{l},2)}_{n-sp}(\tilde{\tau})=\mathfrak{M}^{(\beta_{k},\beta_{l},2)}_{1}(\tilde{\tau})\bigsqcup\mathfrak{M}^{(\beta_{k},\beta_{l},2)}_{2}(\tilde{\tau}).\notag\\
\end{align}
\end{lemma}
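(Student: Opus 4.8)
The plan is to show that the two conditions defining the pieces -- the sub/quotient pair of a presenting non-split extension being isomorphic or not -- give a genuine, set-theoretically exhaustive and disjoint, locally closed decomposition. I would organize the whole argument around the dimension of the automorphism group $\operatorname{Aut}(E_{2})$, since this is an intrinsic invariant of the object $E_{2}$ and therefore does not depend on which presenting extension one chooses; this is what makes the two defining conditions genuinely mutually exclusive.

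First, coverage. By construction every $E_{2}\in \mathfrak{S}^{(\beta_{k},\beta_{l},2)}_{n-sp}(\tilde{\tau})$ admits a non-split exact sequence $0\rightarrow E_{1}\rightarrow E_{2}\rightarrow E_{3}\rightarrow 0$ with $E_{1}\in \mathfrak{S}^{(\beta_{k},1)}_{s}(\tilde{\tau})$ and $E_{3}\in \mathfrak{S}^{(\beta_{l},1)}_{s}(\tilde{\tau})$. Exactly one of $E_{1}\cong E_{3}$ or $E_{1}\ncong E_{3}$ holds. In the first case $E_{1}$ and $E_{3}$ carry the same numerical class, forcing $\beta_{k}=\beta_{l}=\tfrac{\beta}{2}$ and placing $E_{2}$ in $\mathfrak{S}^{(\beta_{k},\beta_{l},2)}_{2}(\tilde{\tau})$; in the second case $E_{2}$ lands in $\mathfrak{S}^{(\beta_{k},\beta_{l},2)}_{1}(\tilde{\tau})$. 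Hence the union exhausts the stratum. If $\beta_{k}\neq \beta_{l}$ then $\beta_{k}\neq \tfrac{\beta}{2}$, so an object with a $\tfrac{\beta}{2}$-presentation and its defining $(\beta_{k},\beta_{l})$-presentation would violate Lemma \ref{non-isom}; thus $\mathfrak{S}^{(\beta_{k},\beta_{l},2)}_{2}(\tilde{\tau})=\emptyset$ and the statement is immediate. The genuine content is therefore the case $\beta_{k}=\beta_{l}=\tfrac{\beta}{2}$.

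Next, disjointness. Here I would combine the automorphism-group computations already recorded above with Lemma \ref{hom-eval}. For a non-split extension of rank-one $\tilde{\tau}$-stable objects the automorphism group is $\operatorname{Hom}(E_{3},E_{1})\rtimes \mathbb{G}_{m}$. By Lemma \ref{hom-eval}, $\operatorname{Hom}(E_{3},E_{1})\cong \mathbb{A}^{1}$ when $E_{1}\cong E_{3}$ and vanishes otherwise, so objects of $\mathfrak{S}^{(\beta_{k},\beta_{l},2)}_{2}(\tilde{\tau})$ have the two-dimensional automorphism group $\mathbb{A}^{1}\rtimes \mathbb{G}_{m}$, whereas objects of $\mathfrak{S}^{(\beta_{k},\beta_{l},2)}_{1}(\tilde{\tau})$ have the one-dimensional automorphism group $\mathbb{G}_{m}$. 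Since $\dim \operatorname{Aut}(E_{2})$ depends only on the isomorphism class of $E_{2}$ and not on the presenting extension, no single object can satisfy both conditions, which yields $\mathfrak{S}^{(\beta_{k},\beta_{l},2)}_{1}(\tilde{\tau})\cap \mathfrak{S}^{(\beta_{k},\beta_{l},2)}_{2}(\tilde{\tau})=\emptyset$.

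Finally, to upgrade this set-theoretic partition to a stratification I would invoke the upper-semicontinuity of the stabilizer dimension already used in the Remark following Definition \ref{2-strata}: the function $E_{2}\mapsto \dim \operatorname{Aut}(E_{2})$ is upper-semicontinuous on $\mathfrak{S}^{(\beta_{k},\beta_{l},2)}_{n-sp}(\tilde{\tau})$, so its level sets $\{\dim=1\}=\mathfrak{S}^{(\beta_{k},\beta_{l},2)}_{1}(\tilde{\tau})$ and $\{\dim=2\}=\mathfrak{S}^{(\beta_{k},\beta_{l},2)}_{2}(\tilde{\tau})$ are locally closed. The main obstacle is exactly the well-definedness underlying disjointness, namely excluding that one object $E_{2}$ could be presented both with isomorphic and with non-isomorphic sub/quotient: Lemma \ref{non-isom} and Lemma \ref{isom-nonisom} dispose of presentations with distinct or swapped classes, but the delicate case in which all four objects carry the class $(\tfrac{\beta}{2},1)$ is handled uniformly and most cleanly by the intrinsic automorphism-dimension invariant, which is why I would make that invariant the spine of the proof.
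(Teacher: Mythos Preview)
Your proposal is correct and takes a genuinely different route from the paper. The paper proves disjointness by a direct diagram chase: assuming $E_{2}$ sits in both a non-split sequence with $E_{1}\ncong E_{3}$ and one with $E'_{1}\cong E'_{3}$, it studies the composite $p\circ\iota:E_{1}\to E'_{3}$, splits into the cases $E_{1}\cong E'_{3}$ and $E_{1}\ncong E'_{3}$, and in each case uses Lemma~\ref{hom-eval} to force either a splitting or a vanishing that contradicts injectivity of $\iota$. Your argument instead pivots on the intrinsic invariant $\dim\operatorname{Aut}(E_{2})$: since the automorphism group of a non-split extension of rank-one $\tilde{\tau}$-stable objects is $\operatorname{Hom}(E_{3},E_{1})\rtimes\mathbb{G}_{m}$, Lemma~\ref{hom-eval} gives $\dim\operatorname{Aut}(E_{2})=1$ on $\mathfrak{S}_{1}$ and $\dim\operatorname{Aut}(E_{2})=2$ on $\mathfrak{S}_{2}$, and these cannot coincide. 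What your approach buys is economy and a built-in upgrade to a locally closed stratification via upper-semicontinuity of the stabilizer dimension, which the paper establishes separately in the lemma that follows. What the paper's approach buys is that it stays at the level of maps between the $E_{i}$ and does not need to identify the automorphism group of the \emph{extension} with $\operatorname{Aut}(E_{2})$ itself; in your argument this identification is implicit (it holds here because any endomorphism of $E_{2}$ automatically preserves the subobject $E_{1}$, but you are relying on the paper's having already recorded these groups as the stabilizers).
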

\begin{proof}: We show that $\mathfrak{M}^{(\beta_{k},\beta_{l},2)}_{1}(\tilde{\tau})$ and $\mathfrak{M}^{(\beta_{k},\beta_{l},2)}_{2}(\tilde{\tau})$ are disjoint. Assume that $$\mathfrak{M}^{(\beta_{k},\beta_{l},2)}_{1}(\tilde{\tau})\bigcap \mathfrak{M}^{(\beta_{k},\beta_{l},2)}_{2}(\tilde{\tau})\neq \varnothing$$ Therefore, there exist an object $E_{2}\in\mathfrak{M}^{(\beta_{k},\beta_{l},2)}_{n-sp}(\tilde{\tau})$ which fits in exact sequences:$$0\rightarrow E_{1}\rightarrow E_{2}\rightarrow E_{3}\rightarrow 0,$$and$$0\rightarrow E'_{1}\rightarrow E_{2}\rightarrow E'_{3}\rightarrow 0,$$such that $E_{1}\ncong E_{3}$ and $E'_{1}\cong E'_{3}$. Hence, one obtains a map $p\circ \iota:E_{1}\rightarrow E'_{3}$ via the following diagram: 
\begin{equation}
\begin{tikzpicture}
back line/.style={densely dotted}, 
cross line/.style={preaction={draw=white, -, 
line width=6pt}}] 
\matrix (m) [matrix of math nodes, 
row sep=1em, column sep=2.em, 
text height=1.5ex, 
text depth=0.25ex]{ 
0&E_{1}&E_{2}&E_{3}&0\\
0&E'_{1}&E_{2}&E'_{3}&0\\};
\path[->]
(m-1-1) edge  (m-1-2)
(m-1-2) edge node [above] {$\iota$} (m-1-3)
(m-1-3) edge (m-1-4) edge node [right] {$\cong$} (m-2-3)
(m-1-4) edge (m-1-5)
(m-2-1) edge (m-2-2)
(m-2-2) edge (m-2-3)
(m-2-3) edge node [below] {$p$} (m-2-4)
(m-2-4) edge (m-2-5);
\end{tikzpicture}.
\end{equation}  
If $E_{1}\cong E'_{3}$, then the image of $p\circ \iota$ is multiple of identity over $E_{1}$ or the zero map. For the former case we conclude that the first row splits, hence a contradiction. If $p\circ \iota$ is the zero map, then $p\circ \iota$ factors through the map $\iota'\circ g$ in the following diagram:
\begin{equation}\label{khol}
\begin{tikzpicture}
back line/.style={densely dotted}, 
cross line/.style={preaction={draw=white, -, 
line width=6pt}}] 
\matrix (m) [matrix of math nodes, 
row sep=1em, column sep=2.5em, 
text height=1.5ex, 
text depth=0.25ex]{ 
0&E_{1}&E_{2}&E_{3}&0\\
0&E'_{1}&E_{2}&E'_{3}&0\\};
\path[->]
(m-1-1) edge  (m-1-2) 
(m-1-2) edge node [above] {$\iota$} (m-1-3) edge node [right] {$g$} (m-2-2)
(m-1-3) edge (m-1-4) edge node [right] {$\cong$} (m-2-3)
(m-1-4) edge (m-1-5)
(m-2-1) edge (m-2-2)
(m-2-2) edge node [above] {$\iota'$} (m-2-3)
(m-2-3) edge node [below] {$p$} (m-2-4)
(m-2-4) edge (m-2-5);
\end{tikzpicture}.
\end{equation}  
Since $E_{1}\cong E'_{3}$ and $E'_{3}\cong E'_{1}$ then $E_{1}\cong E'_{1}$ and we conclude one of the following possibilities: either $\iota$ and $\iota'$ are both given as zero maps which is over-ruled (since $\iota$ and $\iota'$ are both injections) or the map $g$ is zero which is obviously overruled. Moreover if the map $g$ is an isomorphism then $E_{3}\cong E'_{3}$ and since $E_{1}\cong E'_{3}$ by assumption, then $E_{1}\cong E_{3}$ which is a contradiction. Therefore, we assume $E_{1}\ncong E'_{3}$.  In this case $p\circ \iota\in\operatorname{Hom}(E_{1},E'_{3})$ which is the zero map by Lemma \ref{hom-eval}. Then similarly $p\circ \iota$ factors through the map $\iota'\circ g$ in the diagram \eqref{khol}. Since $E'_{1}\cong E'_{3}$ and $E_{1}\ncong E'_{3}$, then $E_{1}\ncong E'_{1}$ and by Lemma \ref{hom-eval}, $g$ is the zero map. By considering the left commutative square in \eqref{khol} we obtain a contradiction since the image of $E_{1}$ in $E_{2}$ is nonzero. Therefore $\mathfrak{M}^{(\beta_{k},\beta_{l},2)}_{1}(\tilde{\tau})\bigcap \mathfrak{M}^{(\beta_{k},\beta_{l},2)}_{2}(\tilde{\tau})= \varnothing$.
\end{proof}
So far we have proved $$\mathfrak{M}^{(\beta,2)}_{n-sp}(\tilde{\tau})
=\bigsqcup_{\beta_{k}+\beta_{l}=\beta}\bigg( \mathfrak{M}^{(\beta_{k},\beta_{l},2)}_{1}(\tilde{\tau})\bigsqcup\mathfrak{M}^{(\beta_{k},\beta_{l},2)}_{2}(\tilde{\tau})\bigg).$$Now consider the split stratum $\mathfrak{M}^{(\beta,2)}_{sp}(\tilde{\tau})$. We similarly decompose this stratum further, depending on the values of $\beta_{k},\beta_{l}$.
\begin{enumerate}
\item Define $\mathfrak{M}^{(\beta,2)}_{\Delta}(\tilde{\tau})$ to be the parametrizing stack of objects $E_{2}\in \mathfrak{M}^{(\beta,2)}_{sp}(\tilde{\tau})$ such that $$E_{2}\cong E_{1}\oplus E_{3}$$where $E_{1},E_{3}\in \mathfrak{M}^{(\frac{\beta}{2},1)}_{\mathcal{B}_{p},ss}(\tilde{\tau})$ and $E_{1}\cong E_{3}$.
\item Define $\mathfrak{M}^{(\beta_{k},\beta_{l},2)}_{O-\Delta}(\tilde{\tau})$\footnote{The notation $\Delta$ and $O-\Delta$ stand for diagonal and off-diagonal respectively} to be the parametrizing stack of objects $E_{2}\in \mathfrak{M}^{(\beta,2)}_{sp}(\tilde{\tau})$ such that $$E_{2}\cong E_{1}\oplus E_{3}$$where $E_{1}\in  \mathfrak{M}_{\mathcal{B}_{p},ss}^{(\beta_{k},1)}$, $E_{3}\in  \mathfrak{M}_{\mathcal{B}_{p},ss}^{(\beta_{l},1)}$ and $E_{1}\ncong E_{3}$. %In other words we define $\mathfrak{S}^{(\beta,2)}_{O-\Delta}(\tilde{\tau})$ to be given as
%\begin{equation}\label{sym-kl}
%\mathfrak{M}^{(\beta,2)}_{O-\Delta}(\tilde{\tau})=\bigsqcup_{\beta_{k}+\beta_{l}=\beta}\operatorname{Sym}\bigg(\mathfrak{M}^{(\beta_{l},1)}_{s,\mathcal{B}_{p}}(\tilde{\tau})\times \mathfrak{M}^{(\beta_{l},1)}_{s,\mathcal{B}_{p}}(\tilde{\tau})\bigg)\backslash \mathfrak{M}^{(\beta,2)}_{\Delta}(\tilde{\tau}).
%\end{equation}
\end{enumerate}
By the same argument as in Lemma \ref{nonsp-ext} we see that  $$\mathfrak{M}^{(\beta,2)}_{sp}(\tilde{\tau})=\mathfrak{M}^{(\beta,2)}_{\Delta}(\tilde{\tau})\bigsqcup \mathfrak{M}^{(\beta_{k},\beta_{l},2)}_{O-\Delta}(\tilde{\tau}).$$The above arguments enable us to give a stratification of $\mathfrak{M}^{(\beta,2)}_{st-ss}(\tilde{\tau})$:
\begin{align}\label{st1-4}
&
\mathfrak{M}^{(\beta,2)}_{st-ss}(\tilde{\tau})=
&
\bigsqcup^{o}_{\beta_{k}+\beta_{l}=\beta}\bigg( \mathfrak{M}^{(\beta_{k},\beta_{l},2)}_{1}(\tilde{\tau})\bigsqcup\mathfrak{M}^{(\beta_{k},\beta_{l},2)}_{2}(\tilde{\tau})\bigg)\bigsqcup
\bigsqcup^{u-o}_{\beta_{k}+\beta_{l}=\beta} \left(\mathfrak{M}^{(\beta_{k},\beta_{l},2)}_{O-\Delta}(\tilde{\tau})\bigsqcup \mathfrak{M}^{(\beta,2)}_{\Delta}(\tilde{\tau})\right)\notag\\
\end{align}
Here $\displaystyle{\bigsqcup^{o}_{\beta_{k}+\beta_{l}=\beta}}$ and $\displaystyle{\bigsqcup^{u-o}_{\beta_{k}+\beta_{l}=\beta}}$ stand for ordered and un-ordered disjoint unions respectively. The latter notation makes sense because for non-split extensions it is important which of the two classes $\beta_{k}, \beta_{l}$ appear first (flipping the order of a non-split extension would mean obtaining a different stratum), where as for split extensions flipping the order of appearance of $\beta_{k}, \beta_{l}$ will not make any change.

We introduce a new notation. Fix $\beta_{k}$ and $\beta_{l}$ such that $\beta_{k}+\beta_{l}=\beta$. 
Let $\mathfrak{D}^{(\beta_{k},\beta_{l},2)}(\tilde{\tau})$ be the parametrizing stack of objects $E_{2}\cong E_{1}\oplus E_{3}$ such that $(E_{1},E_{3})\in \mathfrak{M}_{\mathcal{B}_{p},ss}^{(\beta_{k},1)}\times \mathfrak{M}_{\mathcal{B}_{p},ss}^{(\beta_{l},1)}$ and $E_{1}\cong E_{3}$.
%\item $\mathfrak{M}^{(\beta_{k},1)}_{s}(\tilde{\tau})\times \mathfrak{M}^{(\beta_{l},1)}_{s}(\tilde{\tau})\backslash \mathfrak{D}^{(\beta_{k},\beta_{l},2)}=\{(E_{1},E_{3})\in \mathfrak{M}^{(\beta_{k},1)}_{s}(\tilde{\tau})\times \mathfrak{M}^{(\beta_{l},1)}_{s}(\tilde{\tau})\mid E_{1}\ncong E_{3}\}$.
 It is obvious from the definition that if $\beta_{k}\neq \beta_{l}$ then $\mathfrak{D}^{(\beta_{k},\beta_{l},2)}(\tilde{\tau})=\varnothing$ and for $\beta_{k}=\beta_{l}=\frac{\beta}{2}$, $\mathfrak{D}^{(\beta_{k},\beta_{l},2)}(\tilde{\tau})=\mathfrak{M}^{(\beta,2)}_{\Delta}(\tilde{\tau})$.
Using this new notation, $$\mathfrak{M}^{(\beta,2)}_{\Delta}(\tilde{\tau})=\displaystyle{\bigsqcup_{\beta_{k}+\beta_{l}=\beta}}\mathfrak{D}^{(\beta_{k},\beta_{l},2)}(\tilde{\tau})$$and by \eqref{st1-4} we obtain:
 \begin{align}\label{sym-kl34}
 &
 \mathfrak{M}^{(\beta,2)}_{st-ss}(\tilde{\tau})=
\bigsqcup^{o}_{\beta_{k}+\beta_{l}=\beta}\bigg( \mathfrak{M}^{(\beta_{k},\beta_{l},2)}_{1}(\tilde{\tau})\bigsqcup\mathfrak{M}^{(\beta_{k},\beta_{l},2)}_{2}(\tilde{\tau})\bigg)\bigsqcup
\bigsqcup^{u-o}_{\beta_{k}+\beta_{l}=\beta}\bigg(\mathfrak{M}^{(\beta_{k},\beta_{l},2)}_{O-\Delta}(\tilde{\tau})\bigsqcup \mathfrak{D}^{(\beta_{k},\beta_{l},2)}(\tilde{\tau})\bigg)
 \end{align}
\section{The element of the Hall algebra of $\mathfrak{M}^{(\beta,2)}_{\mathcal{B}_{p},ss}(\tilde{\tau})$}

\begin{defn}\label{stackfcn-stss}
Define $\bar{\delta}^{(\beta,2)}_{st-ss}(\tilde{\tau}), \bar{\delta}^{(\beta_{k},\beta_{l},2)}_{1}(\tilde{\tau}), \bar{\delta}^{(\beta_{k},\beta_{l},2)}_{2}(\tilde{\tau}), \bar{\delta}^{(\beta_{k},\beta_{l},2)}_{3}(\tilde{\tau})$ and $\bar{\delta}^{(\beta_{k},\beta_{l},2)}_{4}(\tilde{\tau})$ to be the characteristic stack functions of $\mathfrak{M}^{(\beta,2)}_{st-ss}(\tilde{\tau})\cong \mathfrak{M}^{(\beta,2)}_{\mathcal{B}_{p},ss}(\tilde{\tau}), \mathfrak{M}^{(\beta_{k},\beta_{l},2)}_{1}(\tilde{\tau}), \mathfrak{M}^{(\beta_{k},\beta_{l},2)}_{2}(\tilde{\tau}), \mathfrak{M}^{(\beta_{k},\beta_{l},2)}_{O-\Delta}(\tilde{\tau})$ and $\mathfrak{D}^{(\beta_{k},\beta_{l},2)}(\tilde{\tau})$ respectively.
\end{defn}
Then, by definitions \ref{stackfcn-stss} and Equation \eqref{sym-kl34} the following identity is true:
\begin{equation}\label{cooft}
\bar{\delta}^{(\beta,2)}_{ss}(\tilde{\tau})=\sum_{\beta_{k}+\beta_{l}=\beta}^{o}\bigg(\bar{\delta}^{(\beta_{k},\beta_{l},2)}_{1}(\tilde{\tau})+\bar{\delta}^{(\beta_{k},\beta_{l},2)}_{2}(\tilde{\tau})\bigg)+\sum_{\beta_{k}+\beta_{l}=\beta}^{u-o}\bigg(\bar{\delta}^{(\beta_{k},\beta_{l},2)}_{3}(\tilde{\tau})+\bar{\delta}^{(\beta_{k},\beta_{l},2)}_{4}(\tilde{\tau})\bigg),
\end{equation}
where $\displaystyle{\sum_{\beta_{k}+\beta_{l}=\beta}^{o}}$ and $\displaystyle{\sum_{\beta_{k}+\beta_{l}=\beta}^{u-o}}$ denote the ordered and un-ordered sums respectively. 
However by construction $\bar{\delta}^{(\beta_{k},\beta_{l},2)}_{2}(\tilde{\tau})=\bar{\delta}^{(\beta_{k},\beta_{l},2)}_{4}(\tilde{\tau})=\varnothing$ when ever $\beta_{k}\neq\beta_{l}$, hence we obtain :
\begin{align}\label{chrs}
&
\bar{\delta}^{(\beta,2)}_{ss}(\tilde{\tau})=\sum_{\beta_{k}+\beta_{l}=\beta}^{o}\bigg(\bar{\delta}^{(\beta_{k},\beta_{l},2)}_{1}(\tilde{\tau})\bigg)+\bar{\delta}^{(\frac{\beta}{2},\frac{\beta}{2},2)}_{2}(\tilde{\tau})+\sum_{\beta_{k}+\beta_{l}=\beta}^{u-o}\bigg(\bar{\delta}^{(\beta_{k},\beta_{l},2)}_{3}(\tilde{\tau})\bigg)+\bar{\delta}^{(\frac{\beta}{2},\frac{\beta}{2},2)}_{4}(\tilde{\tau}).\notag\\
\end{align}
On the other hand by applying Definition \ref{decompos} to $\mathfrak{M}^{(\beta,2)}_{\mathcal{B}_{p},ss}(\tilde{\tau})$ we obtain the description of the element of the Hall algebra:
\begin{equation}\label{stratum-decompos}
\bar{\epsilon}^{(\beta,2)}(\tilde{\tau})=\bar{\delta}^{(\beta,2)}_{ss}(\tilde{\tau})-\sum_{\beta_{k}+\beta_{l}=\beta}^{o}\frac{1}{2}\bar{\delta}^{(\beta_{k},1)}_{s}(\tilde{\tau})*\bar{\delta}^{(\beta_{l},1)}_{s}(\tilde{\tau}).
\end{equation}

Now by equations \eqref{chrs} and \eqref{stratum-decompos} we obtain:
\begin{align}\label{stratum-decompos3}
&
\bar{\epsilon}^{(\beta,2)}(\tilde{\tau})=\sum_{\beta_{k}+\beta_{l}=\beta}^{o}\bigg(\bar{\delta}^{(\beta_{k},\beta_{l},2)}_{1}(\tilde{\tau})\bigg)+\bar{\delta}^{(\frac{\beta}{2},\frac{\beta}{2},2)}_{2}(\tilde{\tau})+\sum_{\beta_{k}+\beta_{l}=\beta}^{u-o}\bigg(\bar{\delta}^{(\beta_{k},\beta_{l},2)}_{3}(\tilde{\tau})\bigg)+\bar{\delta}^{(\frac{\beta}{2},\frac{\beta}{2},2)}_{4}(\tilde{\tau})\notag\\
&
-\sum_{\beta_{k}+\beta_{l}=\beta}^{o}\frac{1}{2}\bar{\delta}^{(\beta_{k},1)}_{s}(\tilde{\tau})*\bar{\delta}^{(\beta_{l},1)}_{s}(\tilde{\tau})\notag\\
\end{align}
It is easily seen that: $\sum_{\beta_{k}+\beta_{l}=\beta}^{u-o}\bar{\delta}^{(\beta_{k},\beta_{l},2)}_{3}(\tilde{\tau})=\bar{\delta}^{(\frac{\beta}{2},\frac{\beta}{2},2)}_{3}(\tilde{\tau})+\frac{1}{2}\cdot \sum_{\substack{\beta_{k}+\beta_{l}=\beta\\ \beta_{k}\neq \beta_{l}}}^{o}\bar{\delta}^{(\beta_{k},\beta_{l},2)}_{3}(\tilde{\tau}).$
Hence, we rewrite the right hand side of \eqref{stratum-decompos3} as:
\begin{align}\label{stratum-decompos2}
&
\bar{\epsilon}^{(\beta,2)}(\tilde{\tau})=
\sum_{\beta_{k}+\beta_{l}=\beta}^{o}\bigg(\bar{\delta}^{(\beta_{k},\beta_{l},2)}_{1}(\tilde{\tau})\bigg)+\bar{\delta}^{(\frac{\beta}{2},\frac{\beta}{2},2)}_{2}(\tilde{\tau})+\bar{\delta}^{(\frac{\beta}{2},\frac{\beta}{2},2)}_{3}(\tilde{\tau})+\frac{1}{2}\cdot \sum_{\substack{\beta_{k}+\beta_{l}=\beta\\ \beta_{k}\neq \beta_{l}}}^{o}\bar{\delta}^{(\beta_{k},\beta_{l},2)}_{3}(\tilde{\tau})+\bar{\delta}^{(\frac{\beta}{2},\frac{\beta}{2},2)}_{4}(\tilde{\tau})\notag\\
&
-\sum_{\beta_{k}+\beta_{l}=\beta}^{o}\frac{1}{2}\cdot\bar{\delta}^{(\beta_{k},1)}_{s}(\tilde{\tau})*\bar{\delta}^{(\beta_{l},1)}_{s}(\tilde{\tau}).\notag\\
\end{align}
Next we compute the ordered product $\bar{\delta}^{(\beta_{k},1)}_{s}(\tilde{\tau})*\bar{\delta}^{(\beta_{l},1)}_{s}(\tilde{\tau})$ for a fixed choice of $\beta_{k}$ and $\beta_{l}$. 
\section{Computation of $\bar{\delta}^{(\beta_{k},1)}_{s}(\tilde{\tau})*\bar{\delta}^{(\beta_{l},1)}_{s}(\tilde{\tau})$}\label{point-to-whole}
In this section we describe the computation of the Ringel hall product of the stack functions $\bar{\delta}_{s}^{(\beta_{k},1)}(\tilde{\tau})*\bar{\delta}_{s}^{(\beta_{l},1)}(\tilde{\tau})$ for $\beta_{k}$ and $\beta_{l}$ satisfying the condition in Assumption \ref{big-assumption}. Similar to discussions in Section \ref{compute-example} let $\pi_{i}:\mathfrak{Exact}_{\mathcal{B}_{p}}\rightarrow \mathfrak{M}_{\mathcal{B}_{p}}(\tilde{\tau}) $ for $i=1,2,3$ be the projection map that sends an exact sequence$$0\to E_{1}\to E_{2}\to E_{3}\to 0$$ to its first, second and third objects respectively over moduli stack of objects in $\mathcal{B}_{p}$. We also have the map $\pi_{1}\times \pi_{3}:\mathfrak{Exact}_{\mathcal{B}_{p}}\rightarrow \mathfrak{M}_{\mathcal{B}_{p}}(\tilde{\tau})\times \mathfrak{M}_{\mathcal{B}_{p}}(\tilde{\tau})$. By Joyce's definition in \cite{a30}: 
\begin{equation}\label{javab} 
\delta^{(\beta_{k},1)}_{s}(\tilde{\tau})*\delta^{(\beta_{l},1)}_{s}(\tilde{\tau})=\pi_{2}*((\pi_{1}\times \pi_{3})^{*}(\delta_{s}^{(\beta_{k},1)}(\tilde{\tau})\otimes \delta_{s}^{(\beta_{l},1)}(\tilde{\tau})))
\end{equation}
Suppose that $\delta^{(\beta_{k},1)}_{s}=[\mathcal{M}^{(\beta_{k},1)}(\tilde{\tau})\slash\mathbb{G}_{m},\rho_{1}]$ and $\delta^{(\beta_{l},1)}_{s}=[\mathcal{M}^{(\beta_{k},1)}(\tilde{\tau})\slash\mathbb{G}_{m},\rho_{3}]$ where $\mathcal{M}^{(\beta_{k},1)}(\tilde{\tau})$ and $\mathcal{M}^{(\beta_{k},1)}(\tilde{\tau})$ denote some underlying parameter schemes and $$\rho_{1}:\left[\mathcal{M}^{(\beta_{k},1)}(\tilde{\tau})\slash\mathbb{G}_{m}\right]\rightarrow \mathfrak{M}_{\mathcal{B}_{p}}(\tilde{\tau}),$$and$$\rho_{3}:\left[\mathcal{M}^{(\beta_{l},1)}(\tilde{\tau})\slash\mathbb{G}_{m}\right]\rightarrow \mathfrak{M}_{\mathcal{B}_{p}}(\tilde{\tau}).$$
Let us denote by $\mathcal{Z}'$ the fibered product $$(\left[\mathcal{M}^{(\beta_{k},1)}(\tilde{\tau})\slash\mathbb{G}_{m}\right]\times \left[\mathcal{M}^{(\beta_{l},1)}(\tilde{\tau})\slash\mathbb{G}_{m}\right])\times_{\rho_{1}\times\rho_{3},\mathfrak{M}_{\mathcal{B}_{p}}(\tilde{\tau})\times \mathfrak{M}_{\mathcal{B}_{p}}(\tilde{\tau}),\pi_{1}\times\pi_{3}}\mathfrak{Exact}_{\mathcal{B}_{p}}$$the identity in \eqref{javab} is described by $(\pi_{2}\circ \Phi)_{*}\mathcal{Z}'$ in the following diagram:
\begin{equation}\label{embedding}
\begin{tikzpicture}
back line/.style={densely dotted}, 
cross line/.style={preaction={draw=white, -, 
line width=6pt}}] 
\matrix (m) [matrix of math nodes, 
row sep=3em, column sep=3.25em, 
text height=1.5ex, 
text depth=0.25ex]{ 
\mathcal{Z}'&\mathfrak{Exact}_{\mathcal{B}_{p}}&\mathfrak{M}_{\mathcal{B}_{p}}(\tilde{\tau})\\
\left[\mathcal{M}^{(\beta_{k},1)}(\tilde{\tau})\slash\mathbb{G}_{m}\right]\times \left[\mathcal{M}^{(\beta_{l},1)}(\tilde{\tau})\slash\mathbb{G}_{m}\right]&\mathfrak{M}_{\mathcal{B}_{p}}(\tilde{\tau})\times \mathfrak{M}_{\mathcal{B}_{p}}(\tilde{\tau})&\\};
\path[->]
(m-1-1) edge node [above] {$\Phi$} (m-1-2)
(m-1-2) edge node [above] {$\pi_{2}$} (m-1-3)
(m-1-1) edge (m-2-1)
(m-1-2) edge node [right] {$\pi_{1}\times \pi_{3}$}(m-2-2)
(m-2-1) edge node [above] {$\rho_{1}\times \rho_{1}$} (m-2-2);
\end{tikzpicture}
\end{equation} 
We compute the product of stack functions in \eqref{javab} by computing it over the $\mathbb{C}$-points of $\bar{\delta}_{s}^{(\beta_{k},1)}(\tilde{\tau})$ and $\bar{\delta}_{s}^{(\beta_{l},1)}(\tilde{\tau})$ (these are induced from $\mathbb{C}$-points of $\mathcal{M}^{(\beta_{k},1)}(\tilde{\tau})$ and $\mathcal{M}^{(\beta_{l},1)}(\tilde{\tau})$) and then integrating over all points in $\mathcal{M}^{(\beta_{k},1)}(\tilde{\tau})\times \mathcal{M}^{(\beta_{l},1)}(\tilde{\tau})$.
\subsubsection{Pointwise products}
 Consider the stack function $$\delta_{1}=\bigg(\left[\frac{\operatorname{Spec}(\mathbb{C})}{\mathbb{G}_{m}}\right],\rho_{1}\circ \iota_{1}\bigg),\,\,\,\text{with}\,\,\,\iota_{1}: \left[\frac{\operatorname{Spec}(\mathbb{C})}{\mathbb{G}_{m}}\right]\rightarrow \left[\frac{\mathcal{M}^{(\beta_{k},1)}(\tilde{\tau})}{\mathbb{G}_{m}}\right].$$Moreover let $$\delta_{3}=\bigg(\left[\frac{\operatorname{Spec}(\mathbb{C})}{\mathbb{G}_{m}}\right],\rho_{3}\circ \iota_{3}\bigg),\,\,\,\text{with}\,\,\,\iota_{3}: \left[\frac{\operatorname{Spec}(\mathbb{C})}{\mathbb{G}_{m}}\right]\rightarrow \left[\frac{\mathcal{M}^{(\beta_{l},1)}(\tilde{\tau})}{\mathbb{G}_{m}}\right].$$Note that $\delta_{1}\subset \bar{\delta}_{s}^{(\beta_{k},1)}(\tilde{\tau})$ and $\delta_{3}\subset \bar{\delta}_{s}^{(\beta_{l},1)}(\tilde{\tau})$ are the sub-stack functions, induced by taking the stacky quotients of $\mathbb{C}$-points of $\mathcal{M}^{(\beta_{k},1)}(\tilde{\tau})$ and $\mathcal{M}^{(\beta_{l},1)}(\tilde{\tau})$ respectively. Let $E_{1}\in \mathfrak{M}^{(\beta_{k},1)}_{\mathcal{B}_{p}}(\tilde{\tau})$ and $E_{3}\in \mathfrak{M}^{(\beta_{l},1)}_{\mathcal{B}_{p}}(\tilde{\tau})$ and $E_{2}\in \mathfrak{M}^{(\beta,2)}_{\mathcal{B}_{p}}(\tilde{\tau})$. Consider the exact sequence in $\mathfrak{Exact}_{\mathcal{B}_{p}}$:
\begin{equation}\label{exact11}
0\rightarrow E_{1}\rightarrow E_{2}\rightarrow E_{3}\rightarrow 0 
\end{equation}
The automorphism group of the extension \eqref{exact11} is given by $\operatorname{Hom}(E_{3},E_{1})\rtimes \mathbb{G}_{m}^{2}$. The element $(g_{1},g_{2})\in \mathbb{G}^{2}_{m}$ acts on $\operatorname{Ext}^{1}(E_{3},E_{1})$ by multiplication by $g_{2}^{-1}g_{1}$ and the action of $\operatorname{Hom}(E_{3},E_{1})$ on $\operatorname{Ext}^{1}(E_{3},E_{1})$ is trivial. If the extensions in \eqref{exact11} are non-split, then the parametrizing scheme of such extensions is obtained by $\mathbb{P}(\operatorname{Ext}^{1}(E_{3},E_{1}))$ and for split extensions, it is obtained by $\operatorname{Spec}(\mathbb{C})$. In case of nonsplit extensions, the stabilizer group of the action of $\mathbb{G}_{m}^{2}$ is given by $\mathbb{G}_{m}$ and for split extensions, the stabilizer group of the action of $\mathbb{G}^{2}_{m}$ is $\mathbb{G}^{2}_{m}$ itself, hence: 
\begin{align}\label{deldel}
&
\delta_{1} *\delta_{3}=\left(\left[\frac{\operatorname{Spec}(\mathbb{C})}{\operatorname{Hom}(E_{3},E_{1})\rtimes\mathbb{G}^{2}_{m}}\right],\mu_{1}\right)+\left(\left[\frac{\mathbb{P}(\operatorname{Ext}^{1}(E_{3},E_{1}))}{\operatorname{Hom}(E_{3},E_{1})\rtimes\mathbb{G}_{m}}\right],\mu_{3}\right).\notag\\
\end{align} 
\subsubsection{Motivic integration over points}Now we can integrate the right hand side of \eqref{deldel} over $\mathbb{C}$-points of $\mathcal{M}^{(\beta_{k},1)}(\tilde{\tau})$ and $\mathcal{M}^{(\beta_{l},1)}(\tilde{\tau})$ respectively. Let us define/recall such notion of integration; 
\begin{defn}\label{chichi-migi}
Let $\mathfrak{R}$ be a $\mathbb{C}$-stack given by $\mathfrak{R}=\left[\frac{\mathcal{M}}{G}\right]$. Let $\mathcal{B}G$ denote the quotient stack $\left[\frac{\operatorname{Spec}(\mathbb{C})}{G}\right]$. Now define motivic integration over $\mathfrak{R}$ as an identity in the motivic ring of stack functions:
\begin{equation}
\displaystyle{\int}_{\mathcal{M}}\left[\frac{\operatorname{Spec}(\mathbb{C})}{G}\right]d\mu_{m}:= \left[\frac{\mathfrak{R}}{G}\right].
\end{equation} 
Moreover assume that $\textbf{P}\rightarrow \mathfrak{R}$ is a vector bundle over $\mathfrak{R}$. Then define:
\begin{equation}
\displaystyle{\int}_{\mathcal{M}}\left[\frac{\textbf{P}}{G}\right]d\mu_{m}:=\displaystyle{\int}_{\mathcal{M}}\chi(\textbf{P})\cdot\left[\frac{\operatorname{Spec}(\mathbb{C})}{G}\right]d\mu_{m}:= \chi(\textbf{P})\cdot\left[\frac{\mathfrak{R}}{G}\right],
\end{equation}
where $\chi(\textbf{P})$ denotes the topological Euler characteristic of $\textbf{P}$. Here, the ``\textit{measure}" $\mu_{m}$ is the map sending constructible sets on $\mathcal{M}$ to the their corresponding elements in the Grothendieck group of stacks.
\end{defn}
Now we use Definition \ref{chichi-migi} to ``\textit{motivically}" integrate Equation \eqref{deldel} over the points of $\mathcal{M}^{(\beta_{k},1)}(\tilde{\tau})\times \mathcal{M}^{(\beta_{l},1)}(\tilde{\tau})$:
\begin{align}\label{int0}
&
\bar{\delta}^{(\beta_{k},1)}_{s}(\tilde{\tau})*\bar{\delta}^{(\beta_{l},1)}_{s}(\tilde{\tau})=\displaystyle{\int}_{(E_{1},E_{3})\in\mathcal{M}^{(\beta_{k},1)}(\tilde{\tau})\times \mathcal{M}^{(\beta_{l},1)}(\tilde{\tau})}\delta_{1}*\delta_{3}=\notag\\
&
\displaystyle{\int}_{(E_{1},E_{3})\in\mathcal{M}^{(\beta_{k},1)}(\tilde{\tau})\times \mathcal{M}^{(\beta_{l},1)}(\tilde{\tau})}\left[\frac{\operatorname{Spec}(\mathbb{C})}{\operatorname{Hom}(E_{3},E_{1})\rtimes\mathbb{G}^{2}_{m}}\right]d\mu_{m}\notag\\
&
+\displaystyle{\int}_{(E_{1},E_{3})\in\mathcal{M}^{(\beta_{k},1)}(\tilde{\tau})\times \mathcal{M}^{(\beta_{l},1)}(\tilde{\tau})}\left[\frac{\mathbb{P}(\operatorname{Ext}^{1}(E_{3},E_{1}))}{\operatorname{Hom}(E_{3},E_{1})\rtimes\mathbb{G}_{m}}\right]d\mu_{m}.\notag\\
\end{align}
By the result of Lemma \ref{hom-eval}, if $\iota_{1}=\iota_{3}$, i.e if $E_{1}\cong E_{3}$, then $\operatorname{Hom}(E_{3},E_{1})\cong \mathbb{A}^{1}$ and if $\iota_{1}\neq \iota_{3}$, then $\operatorname{Hom}(E_{3},E_{1})=\operatorname{Spec}(\mathbb{C})$, hence we can evaluate the first summand on the right hand side \eqref{int0} as follows:
\begin{align}\label{int1}
&
\displaystyle{\int}_{(E_{1},E_{3})\in\mathcal{M}^{(\beta_{k},1)}(\tilde{\tau})\times \mathcal{M}^{(\beta_{l},1)}(\tilde{\tau})}\left[\frac{\operatorname{Spec}(\mathbb{C})}{\operatorname{Hom}(E_{3},E_{1})\rtimes\mathbb{G}^{2}_{m}}\right]d\mu_{m}=\notag\\
&
\displaystyle{\int}_{\Delta}\left[\frac{\operatorname{Spec}(\mathbb{C})}{\mathbb{A}^{1}\rtimes\mathbb{G}^{2}_{m}}\right]d\mu_{m}+\displaystyle{\int}_{(E_{1},E_{3})\in\mathcal{M}^{(\beta_{k},1)}(\tilde{\tau})\times \mathcal{M}^{(\beta_{l},1)}(\tilde{\tau})\backslash \Delta}\left[\frac{\operatorname{Spec}(\mathbb{C})}{\mathbb{G}^{2}_{m}}\right]d\mu_{m}\notag\\
&
=\left[\frac{\mathcal{M}^{(\frac{\beta}{2},1)}(\tilde{\tau})}{\mathbb{A}^{1}\rtimes\mathbb{G}^{2}_{m}}\right]+\left[\frac{\mathcal{M}^{(\beta_{k},1)}(\tilde{\tau})\times \mathcal{M}^{(\beta_{l},1)}(\tilde{\tau})\backslash \Delta}{\mathbb{G}^{2}_{m}}\right].\notag\\
\end{align}
Here $\Delta$ is the diagonal in the product $\mathcal{M}^{(\beta_{k},1)}(\tilde{\tau})\times \mathcal{M}^{(\beta_{l},1)}(\tilde{\tau})$. Similarly for the second summand on the right hand side of \eqref{int0} we obtain:
\begin{align}\label{int2}
&
\displaystyle{\int}_{(E_{1},E_{3})\in\mathcal{M}^{(\beta_{k},1)}(\tilde{\tau})\times \mathcal{M}^{(\beta_{l},1)}(\tilde{\tau})}\left[\frac{\mathbb{P}(\operatorname{Ext}^{1}(E_{3},E_{1}))}{\operatorname{Hom}(E_{3},E_{1})\rtimes\mathbb{G}_{m}}\right]d\mu_{m}\notag\\
&
=\displaystyle{\int}_{E_{1}\in\mathcal{M}^{(\frac{\beta}{2},1)}(\tilde{\tau})}\left[\frac{\mathbb{P}(\operatorname{Ext}^{1}(E_{1},E_{1}))}{\mathbb{A}^{1}\rtimes\mathbb{G}_{m}}\right]d\mu_{m}+\displaystyle{\int}_{(E_{1},E_{3})\in\mathcal{M}^{(\beta_{k},1)}(\tilde{\tau})\times \mathcal{M}^{(\beta_{l},1)}(\tilde{\tau})\backslash \Delta}\left[\frac{\mathbb{P}(\operatorname{Ext}^{1}(E_{3},E_{1}))}{\mathbb{G}_{m}}\right]d\mu_{m}.\notag\\
\end{align}
From Equations \eqref{int0}, \eqref{int1} and \eqref{int2} we obtain:
\begin{align}\label{deldel-decompos3}
&
\bar{\delta}^{(\beta_{k},1)}_{s}(\tilde{\tau})*\bar{\delta}^{(\beta_{l},1)}_{s}(\tilde{\tau})=\notag\\
&
\left[\frac{\mathcal{M}^{(\frac{\beta}{2},1)}(\tilde{\tau})}{\mathbb{A}^{1}\rtimes\mathbb{G}^{2}_{m}}\right]+\left[\frac{\mathcal{M}^{(\beta_{k},1)}(\tilde{\tau})\times \mathcal{M}^{(\beta_{l},1)}(\tilde{\tau})\backslash \Delta}{\mathbb{G}^{2}_{m}}\right]+\displaystyle{\int}_{E_{1}\in\mathcal{M}^{(\frac{\beta}{2},1)}}\left[\frac{\mathbb{P}(\operatorname{Ext}^{1}(E_{1},E_{1}))}{\mathbb{A}^{1}\rtimes\mathbb{G}_{m}}\right]d\mu_{m}\notag\\
&
+\displaystyle{\int}_{(E_{1},E_{3})\in\mathcal{M}^{(\beta_{k},1)}(\tilde{\tau})\times \mathcal{M}^{(\beta_{l},1)}(\tilde{\tau})\backslash \Delta}\left[\frac{\mathbb{P}(\operatorname{Ext}^{1}(E_{3},E_{1}))}{\mathbb{G}_{m}}\right]d\mu_{m}.\notag\\
\end{align}
Equation \eqref{deldel-decompos3} above was obtained by fixing the classes $\beta_{k},\beta_{l}$ and adding motivically over the points of the underlying parameterizing schemes, i.e. adding the contribution of all sheaves with the same fixed numerical classes. Now the final step to complete the calculation is to vary the classes $\beta_{k}, \beta_{l}$ as long as $\beta_{k}+\beta_{l}=\beta$:
\begin{align}\label{deldel-decompos4}
&
\sum_{\beta_{k}+\beta_{l}=\beta}^{o}\bar{\delta}^{(\beta_{k},1)}_{s}(\tilde{\tau})*\bar{\delta}^{(\beta_{l},1)}_{s}(\tilde{\tau})=\sum_{\beta_{k}+\beta_{l}=\beta}^{o}\bigg(\displaystyle{\int}_{(E_{1},E_{3})\in\mathcal{M}^{(\beta_{k},1)}(\tilde{\tau})\times \mathcal{M}^{(\beta_{l},1)}(\tilde{\tau})\backslash \Delta}\left[\frac{\mathbb{P}(\operatorname{Ext}^{1}(E_{3},E_{1}))}{\mathbb{G}_{m}}\right]d\mu_{m}\bigg)\notag\\
&
+\displaystyle{\int}_{E_{1}\in\mathcal{M}^{(\frac{\beta}{2},1)}(\tilde{\tau})}\left[\frac{\mathbb{P}(\operatorname{Ext}^{1}(E_{1},E_{1}))}{\mathbb{A}^{1}\rtimes\mathbb{G}_{m}}\right]d\mu_{m}
+\sum_{\beta_{k}+\beta_{l}=\beta}^{o}\left[\frac{\mathcal{M}^{(\beta_{k},1)}(\tilde{\tau})\times \mathcal{M}^{(\beta_{l},1)}(\tilde{\tau})\backslash \Delta}{\mathbb{G}^{2}_{m}}\right]+\left[\frac{\mathcal{M}^{(\frac{\beta}{2},1)}(\tilde{\tau})}{\mathbb{A}^{1}\rtimes\mathbb{G}^{2}_{m}}\right]\notag\\
\end{align}
It is easily seen that
\begin{align}
&
\sum_{\beta_{k}+\beta_{l}=\beta}^{o}\left[\frac{\mathcal{M}^{(\beta_{k},1)}(\tilde{\tau})\times \mathcal{M}^{(\beta_{l},1)}(\tilde{\tau})\backslash \Delta}{\mathbb{G}^{2}_{m}}\right]=\notag\\
&
\,\,\,\,\,\,\,\,\,\,\,\,\,\,\,\,\,\,\,\,\,\,\,\,\,\,\,\,\,\,\,\,\,\,\,\,\,\,\,\,\left[\frac{\mathcal{M}^{(\frac{\beta}{2},1)}(\tilde{\tau})\times \mathcal{M}^{(\frac{\beta}{2},1)}(\tilde{\tau})\backslash \Delta}{\mathbb{G}^{2}_{m}}\right]+\sum_{\substack{\beta_{k}+\beta_{l}=\beta\\ \beta_{k}\neq \beta_{l}}}^{o}\left[\frac{\mathcal{M}^{(\beta_{k},1)}(\tilde{\tau})\times \mathcal{M}^{(\beta_{l},1)}(\tilde{\tau})}{\mathbb{G}^{2}_{m}}\right]\notag\\
\end{align}
Hence
\begin{align}\label{deldel-decompos}
&
\sum_{\beta_{k}+\beta_{l}=\beta}^{o}\bar{\delta}^{(\beta_{k},1)}_{s}(\tilde{\tau})*\bar{\delta}^{(\beta_{l},1)}_{s}(\tilde{\tau})=
\sum_{\beta_{k}+\beta_{l}=\beta}^{o}\bigg(\displaystyle{\int}_{(E_{1},E_{3})\in\mathcal{M}^{(\beta_{k},1)}(\tilde{\tau})\times \mathcal{M}^{(\beta_{l},1)}(\tilde{\tau})\backslash \Delta}\left[\frac{\mathbb{P}(\operatorname{Ext}^{1}(E_{3},E_{1}))}{\mathbb{G}_{m}}\right]d\mu_{m}\bigg)\notag\\
&
+\displaystyle{\int}_{E_{1}\in\mathcal{M}^{(\frac{\beta}{2},1)}(\tilde{\tau})}\left[\frac{\mathbb{P}(\operatorname{Ext}^{1}(E_{1},E_{1}))}{\mathbb{A}^{1}\rtimes\mathbb{G}_{m}}\right]d\mu_{m}
+\left[\frac{\mathcal{M}^{(\frac{\beta}{2},1)}(\tilde{\tau})\times \mathcal{M}^{(\frac{\beta}{2},1)}(\tilde{\tau})\backslash \Delta}{\mathbb{G}^{2}_{m}}\right]\notag\\
&
+\sum_{\substack{\beta_{k}+\beta_{l}=\beta\\ \beta_{k}\neq \beta_{l}}}^{o}\left[\frac{\mathcal{M}^{(\beta_{k},1)}(\tilde{\tau})\times \mathcal{M}^{(\beta_{l},1)}(\tilde{\tau})}{\mathbb{G}^{2}_{m}}\right]+\left[\frac{\mathcal{M}^{(\frac{\beta}{2},1)}(\tilde{\tau})}{\mathbb{A}^{1}\rtimes\mathbb{G}^{2}_{m}}\right].\notag\\
\end{align}
By Equations \eqref{deldel-decompos}, \eqref{stratum-decompos2}, we obtain:
\begin{align}\label{result}
&
\bar{\epsilon}^{(\beta,2)}(\tilde{\tau})=\sum_{\beta_{k}+\beta_{l}=\beta}^{o}\bigg(\bar{\delta}^{(\beta_{k},\beta_{l},2)}_{1}(\tilde{\tau})-\frac{1}{2}\displaystyle{\int}_{\mathcal{M}^{(\beta_{k},1)}(\tilde{\tau})\times \mathcal{M}^{(\beta_{l},1)}(\tilde{\tau})\backslash \Delta}\left[\frac{\mathbb{P}(\operatorname{Ext}^{1}(E_{3},E_{1}))}{\mathbb{G}_{m}}\right]d\mu_{m}\bigg)\notag\\
&
+\bigg(\bar{\delta}^{(\frac{\beta}{2},\frac{\beta}{2},2)}_{2}(\tilde{\tau})-\frac{1}{2}\displaystyle{\int}_{\mathcal{M}^{(\frac{\beta}{2},1)}(\tilde{\tau})}\left[\frac{\mathbb{P}(\operatorname{Ext}^{1}(E_{1},E_{1}))}{\mathbb{A}^{1}\rtimes\mathbb{G}_{m}}\right]d\mu_{m}\bigg)\notag\\
&
+\bigg(\bar{\delta}^{(\frac{\beta}{2},\frac{\beta}{2},2)}_{3}(\tilde{\tau})-\frac{1}{2}\left[\frac{\mathcal{M}^{(\frac{\beta}{2},1)}(\tilde{\tau})\times \mathcal{M}^{(\frac{\beta}{2},1)}(\tilde{\tau})\backslash \Delta}{\mathbb{G}^{2}_{m}}\right]\bigg)\notag\\
&
+\sum_{\substack{\beta_{k}+\beta_{l}=\beta\\ \beta_{k}\neq \beta_{l}}}^{o}\bigg(\frac{1}{2}\cdot\bar{\delta}^{(\beta_{k},\beta_{l},2)}_{3}(\tilde{\tau})-\frac{1}{2}\left[\frac{\mathcal{M}^{(\beta_{k},1)}(\tilde{\tau})\times \mathcal{M}^{(\beta_{l},1)}(\tilde{\tau})}{\mathbb{G}^{2}_{m}}\right]\bigg)+\bigg(\bar{\delta}^{(\frac{\beta}{2},\frac{\beta}{2},2)}_{4}(\tilde{\tau})-\frac{1}{2}\left[\frac{\mathcal{M}^{(\frac{\beta}{2},1)}(\tilde{\tau})}{\mathbb{A}^{1}\rtimes\mathbb{G}^{2}_{m}}\right]\bigg).\notag\\
\end{align}
We introduce a new notation which simplifies the right hand side of Equation \eqref{result}. Let 
\begin{align}\label{list}
&
\epsilon_{1}^{(k,l)}(\tilde{\tau})=\bar{\delta}^{(\beta_{k},\beta_{l},2)}_{1}(\tilde{\tau})-\frac{1}{2}\displaystyle{\int}_{\mathcal{M}^{(\beta_{k},1)}(\tilde{\tau})\times \mathfrak{S}^{(\beta_{l},1)}\backslash \Delta}\left[\frac{\mathbb{P}(\operatorname{Ext}^{1}(E_{3},E_{1}))}{\mathbb{G}_{m}}\right]d\mu_{m}\notag\\
&
\epsilon_{2}^{(\frac{1}{2},\frac{1}{2})}(\tilde{\tau})=\bar{\delta}^{(\frac{\beta}{2},\frac{\beta}{2},2)}_{2}(\tilde{\tau})-\frac{1}{2}\displaystyle{\int}_{\mathcal{M}^{(\frac{\beta}{2},1)}}\left[\frac{\mathbb{P}(\operatorname{Ext}^{1}(E_{1},E_{1}))}{\mathbb{A}^{1}\rtimes\mathbb{G}_{m}}\right]d\mu_{m}\notag\\
&
\epsilon^{(\frac{1}{2},\frac{1}{2})}_{3}(\tilde{\tau})=\bar{\delta}^{(\frac{\beta}{2},\frac{\beta}{2},2)}_{3}(\tilde{\tau})-\frac{1}{2}\left[\frac{\mathcal{M}^{(\frac{\beta}{2},1)}(\tilde{\tau})\times \mathcal{M}^{(\frac{\beta}{2},1)}(\tilde{\tau})\backslash \Delta}{\mathbb{G}^{2}_{m}}\right]\notag\\
&
\epsilon_{3}^{(k,l)}(\tilde{\tau})=\bigg(\frac{1}{2}\cdot\bar{\delta}^{(\beta_{k},\beta_{l},2)}_{3}(\tilde{\tau})-\frac{1}{2}\left[\frac{\mathcal{M}^{(\beta_{k},1)}(\tilde{\tau})\times \mathcal{M}^{(\beta_{l},1)}(\tilde{\tau})}{\mathbb{G}^{2}_{m}}\right]\bigg)\mid_{\beta_{k}\neq \beta_{l}}\notag\\
&
\epsilon_{4}^{(\frac{1}{2},\frac{1}{2})}(\tilde{\tau})=\bar{\delta}^{(\frac{\beta}{2},\frac{\beta}{2},2)}_{4}(\tilde{\tau})-\frac{1}{2}\left[\frac{\mathcal{M}^{(\frac{\beta}{2},1)}(\tilde{\tau})}{\mathbb{A}^{1}\rtimes\mathbb{G}^{2}_{m}}\right].\notag\\
\end{align} 
Using the notation in \eqref{list}, Equation \eqref{result} is rewritten as:
\begin{align}\label{sum-result}
\bar{\epsilon}^{(\beta,2)}(\tilde{\tau})=\sum_{\beta_{k}+\beta_{l}=\beta}^{o}\epsilon_{1}^{(k,l)}(\tilde{\tau})+\epsilon_{2}^{(\frac{1}{2},\frac{1}{2})}(\tilde{\tau})+\epsilon^{(\frac{1}{2},\frac{1}{2})}_{3}(\tilde{\tau})+\sum_{\substack{\beta_{k}+\beta_{l}=\beta\\ \beta_{k}\neq \beta_{l}}}^{o}\epsilon_{3}^{(k,l)}(\tilde{\tau})+\epsilon_{4}^{(\frac{1}{2},\frac{1}{2})}(\tilde{\tau})
\end{align}
Next we show that each summand on the right hand side of \eqref{sum-result} is given by a stack function supported over virtual indecomposables. Note that by construction and Theorem \ref{theorem81}, the first summands on the right hand side of equations \eqref{list}, are each characteristic stack functions which are given as (stacky) quotients of some associated parameterizing scheme by the action of $GL_{2}(\mathbb{C})$, together with the corresponding embedding map, in other words:$$\bar{\delta}^{(\beta_{k},\beta_{l},2)}_{i}(\tilde{\tau})=\left(\left[\frac{\mathcal{M}^{(\beta_{k},\beta_{l},2)}_{i}(\tilde{\tau})}{GL_{2}(\mathbb{C})}\right], \rho_{i}\right).$$For more detail on how to obtain the schemes $\mathcal{M}^{(\beta_{k},\beta_{l},2)}_{i}(\tilde{\tau})$ look at the appendix, Definition \ref{st-ss-components}. Joyce in \cite[Section 6.2]{a33} has shown that given $\left[\left(\left[\frac{\mathcal{U}}{GL_{2}(\mathbb{C})}\right],\nu\right)\right]$ one has the following identity of stack functions:
\begin{align}\label{FGTG}
&
\left[\left(\left[\frac{\mathcal{U}}{GL_{2}(\mathbb{C})}\right],a\right)\right]=
F(GL_{2}(\mathbb{C}),\mathbb{G}^{2}_{m},\mathbb{G}^{2}_{m})\left[\left(\left[\frac{\mathcal{U}}{\mathbb{G}^{2}_{m}}\right],\mu\circ i_{1}\right)\right]\notag\\
&
+F(GL_{2}(\mathbb{C}),\mathbb{G}^{2}_{m},\mathbb{G}_{m})\left[\left(\left[\frac{\mathcal{U}}{\mathbb{G}_{m}}\right],\mu\circ i_{2}\right)\right],\notag\\
\end{align}
where 
\begin{align}\label{ration-value}
&
F(GL_{2}(\mathbb{C}),\mathbb{G}^{2}_{m},\mathbb{G}^{2}_{m})=\frac{1}{2}, \,\,\,\,
F(GL_{2}(\mathbb{C}),\mathbb{G}^{2}_{m},\mathbb{G}_{m})=-\frac{3}{4},
\end{align}
and $\mu\circ i_{1}$ and $\mu\circ i_{2}$ are the obvious embeddings. Now apply the result of Joyce \cite[Section 6.2]{a33} and obtain a decomposition of $\bar{\delta}^{(\beta_{k},\beta_{l},2)}_{i}(\tilde{\tau})$ of the following form:
\begin{align}\label{delta_i}
&
\bar{\delta}^{(\beta_{k},\beta_{l},2)}_{i}(\tilde{\tau})=\left[\left(\left[\frac{\mathcal{M}^{(\beta_{k},\beta_{l},2)}_{i}(\tilde{\tau})}{GL_{2}(\mathbb{C})}\right], \rho_{i}\right)\right]=\notag\\
&
\frac{1}{2}\left[\left(\left[\frac{\mathcal{M}^{(\beta_{k},\beta_{l},2)}_{i}(\tilde{\tau})}{\mathbb{G}^{2}_{m}}\right],\mu'_{i}\circ i_{1}\right)\right]-\frac{3}{4}\left[\left(\left[\frac{\mathcal{M}^{(\beta_{k},\beta_{l},2)}_{i}(\tilde{\tau})}{\mathbb{G}_{m}}\right],\mu'_{i}\circ i_{2}\right)\right],\notag\\
\end{align}
where $\mu'_{i}\circ i_{1}$ and $\mu'_{i}\circ i_{2}$ are the obvious embeddings.
\subsection{Computation of $\tilde{\Psi}^{\mathcal{B}_{p}}(\epsilon_{1}^{(k,l)}(\tilde{\tau}))$}
By definition, for any fixed choice of $k,l$, the objects $E_{2}\in \mathcal{M}_{1}^{(\beta_{k},\beta_{l},2)}(\tilde{\tau})$ fit into non-split exact sequences
\begin{equation}\label{non-split}
0\rightarrow E_{1}\rightarrow E_{2}\rightarrow E_{3}\rightarrow 0
\end{equation} 
where $E_{1}\ncong E_{3}$. In this case, the automorphism group of the extension \eqref{non-split} is given by $\mathbb{G}_{m}$. Since over $\mathcal{M}_{1}^{(\beta_{k},\beta_{l},2)}(\tilde{\tau})$, there exists an action of $GL_{2}(\mathbb{C})$ and the stabilizer group of each point in $\mathcal{M}_{1}^{(\beta_{k},\beta_{l},2)}(\tilde{\tau})$ is $\mathbb{G}_{m}$, then the $GL_{2}(\mathbb{C})$ action reduces to a free action of $PGL_{2}(\mathbb{C})$ on $\mathcal{M}_{1}^{(\beta_{k},\beta_{l},2)}(\tilde{\tau})$. Hence it is easy to see that there exists a map$$\pi_{1}:\mathcal{M}_{1}^{(\beta_{k},\beta_{l},2)}(\tilde{\tau})\slash PGL_{2}(\mathbb{C})\rightarrow \mathcal{M}^{(\beta_{k},1)}(\tilde{\tau})\times \mathcal{M}^{(\beta_{l},1)}(\tilde{\tau})\backslash \Delta$$which sends $E_{2}$ to $(E_{1},E_{3})$.

Consider points $(E_{1},E_{3})\in  \mathcal{M}^{(\beta_{k},1)}(\tilde{\tau})\times \mathcal{M}^{(\beta_{l},1)}(\tilde{\tau})\backslash \Delta$. Consider the fiber $\pi_{1}\mid_{(E_{1},E_{3})}$ which is given by the set of points $E_{2}\in \mathfrak{S}_{1}^{(\beta_{k},\beta_{l},2)}(\tilde{\tau})$ which fit in the exact sequence \eqref{non-split}. Dividing by the automorphism group of extension \eqref{non-split}, $\mathbb{G}_{m}$, would provide a bijective correspondence between the set of isomorphism classes of such $E_{2}$ (which still undergo a free action of $PGL_{2}(\mathbb{C})$) and the set of tuples $(E_{1},E_{3})$. Therefore there exists a bijective map between the closed points of  the fiber of $\pi_{1}$ over $(E_{1},E_{3})\in \mathcal{M}^{(\beta_{k},1)}(\tilde{\tau})\times \mathcal{M}^{(\beta_{l},1)}(\tilde{\tau})\backslash \Delta$ and the closed points of $\mathcal{M}_{1}^{(\beta_{k},\beta_{l},2)}(\tilde{\tau})\slash PGL_{2}(\mathbb{C})$ which fit into the exact sequence \eqref{non-split}, i.e the closed points of $\mathbb{P}(\operatorname{Ext}^{1}(E_{3},E_{1}))$ over $\mathcal{M}^{(\beta_{k},1)}(\tilde{\tau})\times \mathcal{M}^{(\beta_{l},1)}(\tilde{\tau})\backslash \Delta$. Now rewrite $\bar{\delta}^{(\beta_{k},\beta_{l},2)}_{1}$ as:
\begin{align}\label{re-written}
&
\bar{\delta}^{(\beta_{k},\beta_{l},2)}_{1}=\left[\frac{\mathcal{M}_{1}^{(\beta_{k},\beta_{l},2)}(\tilde{\tau})}{GL_{2}(\mathbb{C})}\right]=\left[\frac{\mathcal{M}_{1}^{(\beta_{k},\beta_{l},2)}(\tilde{\tau})\slash PGL_{2}(\mathbb{C})}{\mathbb{G}_{m}}\right]=\notag\\
&
\displaystyle{\int}_{\mathcal{M}_{1}^{(\beta_{k},\beta_{l},2)}(\tilde{\tau})\slash PGL_{2}(\mathbb{C})}\left[\frac{\operatorname{Spec}(\mathbb{C})}{\mathbb{G}_{m}}\right]d\mu_{m}=\displaystyle{\int}_{(E_{1},E_{3})\in\mathcal{M}^{(\beta_{k},1)}(\tilde{\tau})\times \mathcal{M}^{(\beta_{l},1)}(\tilde{\tau})\backslash \Delta}\left[\frac{\mathbb{P}(\operatorname{Ext}^{1}(E_{3},E_{1}))}{\mathbb{G}_{m}}\right]d\mu_{m},\notag\\
\end{align}
by Equations \eqref{re-written} and \eqref{result} the equation for $\epsilon_{1}^{(k,l)}(\tilde{\tau})$ is obtained as:
\begin{align}
&
\epsilon_{1}^{(k,l)}(\tilde{\tau})=\displaystyle{\int}_{(E_{1},E_{3})\in\mathcal{M}^{(\beta_{k},1)}(\tilde{\tau})\times \mathcal{M}^{(\beta_{l},1)}(\tilde{\tau})\backslash \Delta}\left[\frac{\mathbb{P}(\operatorname{Ext}^{1}(E_{3},E_{1}))}{\mathbb{G}_{m}}\right]d\mu_{m}\notag\\
&
-\frac{1}{2}\displaystyle{\int}_{(E_{1},E_{3})\in\mathcal{M}^{(\beta_{k},1)}(\tilde{\tau})\times \mathcal{M}^{(\beta_{l},1)}(\tilde{\tau})\backslash \Delta}\left[\frac{\mathbb{P}(\operatorname{Ext}^{1}(E_{3},E_{1}))}{\mathbb{G}_{m}}\right]d\mu_{m}\notag\\
&
=\frac{1}{2}\displaystyle{\int}_{(E_{1},E_{3})\in\mathcal{M}^{(\beta_{k},1)}(\tilde{\tau})\times \mathcal{M}^{(\beta_{l},1)}(\tilde{\tau})\backslash \Delta}\left[\frac{\mathbb{P}(\operatorname{Ext}^{1}(E_{3},E_{1}))}{\mathbb{G}_{m}}\right]d\mu_{m}.\notag\\
\end{align}
Now apply the Lie algebra morphism $\tilde{\Psi}^{\mathcal{B}_{p}}$ to $\epsilon_{1}^{(k,l)}(\tilde{\tau})$ and obtain:
\begin{align}\label{psi-e1}
&
\tilde{\Psi}^{\mathcal{B}_{p}}(\epsilon_{1}^{(k,l)}(\tilde{\tau}))=\notag\\
&
\frac{1}{2}\cdot\chi^{na}\bigg(\displaystyle{\int}_{(E_{1},E_{3})\in\mathcal{M}^{(\beta_{k},1)}(\tilde{\tau})\times \mathcal{M}^{(\beta_{l},1)}(\tilde{\tau})\backslash \Delta}\left[\frac{\mathbb{P}(\operatorname{Ext}^{1}(E_{3},E_{1}))}{\mathbb{G}_{m}}\right]d\mu_{m},\mu^{*}\nu_{\mathfrak{M}_{1}^{(\beta,2)}}\bigg)\cdot \tilde{\lambda}^{(\beta,2)}\notag\\
&
\frac{1}{2}\cdot\bigg(\displaystyle{\int}_{(E_{1},E_{3})\in\mathcal{M}^{(\beta_{k},1)}(\tilde{\tau})\times \mathcal{M}^{(\beta_{l},1)}(\tilde{\tau})\backslash \Delta}\chi(\mathbb{P}(\operatorname{Ext}^{1}(E_{3},E_{1})))\cdot \left[\frac{\operatorname{Spec}(\mathbb{C})}{\mathbb{G}_{m}}\right]d\chi\bigg)\cdot \tilde{\lambda}^{(\beta,2)}\notag\\
&
=(-1)^{1}\cdot(-1)^{\operatorname{dim} (\mathfrak{M}^{(\beta_{k},\beta_{l},2)}_{1}(\tilde{\tau}))-1}\cdot \frac{1}{2}\cdot\bigg(\displaystyle{\int}_{(E_{1},E_{3})\in\mathcal{M}^{(\beta_{k},1)}(\tilde{\tau})\times \mathcal{M}^{(\beta_{l},1)}(\tilde{\tau})\backslash \Delta}\chi(\mathbb{P}(\operatorname{Ext}^{1}(E_{3},E_{1})))d\chi\bigg)\cdot \tilde{\lambda}^{(\beta,2)}\notag\\
\end{align}
The factor of $(-1)^{\operatorname{dim} (\mathfrak{M}^{(\beta,2)}_{st-ss,\mathcal{B}_{p}}-1)}$ is due to the fact that a stacky point given as $\left[\frac{\operatorname{Spec}(\mathbb{C})}{\mathbb{G}_{m}}\right]$ has relative dimension $\operatorname{dim} (\mathfrak{M}^{(\beta_{k},\beta_{l},2)}_{1}(\tilde{\tau}))-1$ with respect to the ambient stack. Moreover, the factor of $(-1)^{1}$ is due the fact that Behrend's function over $\left[\frac{\operatorname{Spec}(\mathbb{C})}{\mathbb{G}_{m}}\right]$ detects a singular point of multiplicity $1$.
\subsection{Computation of $\tilde{\Psi}^{\mathcal{B}_{p}}(\epsilon_{2}^{(\frac{1}{2},\frac{1}{2})}(\tilde{\tau}))$}
By Equation \eqref{delta_i}
\begin{align}
&
\bar{\delta}^{(\frac{\beta}{2},\frac{\beta}{2},2)}_{2}(\tilde{\tau})=\left[\frac{\mathcal{M}_{2}^{(\frac{\beta}{2},\frac{\beta}{2},2)}(\tilde{\tau})}{GL_{2}(\mathbb{C})}\right]=\notag\\
&
\frac{1}{2}\left[\left(\left[\frac{\mathcal{M}_{2}^{(\frac{\beta}{2},\frac{\beta}{2},2)}(\tilde{\tau})}{\mathbb{G}^{2}_{m}}\right],\mu'_{2}\circ i_{1}\right)\right]-\frac{3}{4}\left[\left(\left[\frac{\mathcal{M}_{2}^{(\frac{\beta}{2},\frac{\beta}{2},2)}(\tilde{\tau})}{\mathbb{G}_{m}}\right],\mu'_{2}\circ i_{2}\right)\right].\notag\\
\end{align}
The stabilizer group of points in $\mathcal{M}^{(\frac{\beta}{2},\frac{\beta}{2},2)}_{2}(\tilde{\tau})$ is given by the automorphism group of extensions associated to those points. Since a point $p\in \mathcal{M}^{(\frac{\beta}{2},\frac{\beta}{2},2)}_{2}(\tilde{\tau})(\mathbb{C})$ is represented by an object $E_{2}$ which fits into a non-split exact sequence: \begin{equation}\label{non-split-2}
0\rightarrow E_{1}\rightarrow E_{2}\rightarrow E_{1}\rightarrow 0,
\end{equation} 
then the automorphism group of extension \eqref{non-split-2} is given by $\mathbb{A}^{1}\rtimes \mathbb{G}_{m}$. The stabilizer group (at point $p$)  of the action of $GL_{2}(\mathbb{C})$ on $\mathcal{M}^{(\frac{\beta}{2},\frac{\beta}{2},2)}_{2}(\tilde{\tau})$ is obtained by $\mathbb{A}^{1}\rtimes \mathbb{G}_{m}$. Since $E_{1}\cong E_{3}$ then for diagonal matrices (given by $\mathbb{G}_{m}^{2}=T^{GL_{2}(\mathbb{C})}$) we have that $$\mathbb{G}_{m}^{2}\cap \operatorname{Stab}_{p}(GL_{2}(\mathbb{C}))=\mathbb{G}_{m}\subset \mathbb{G}_{m}^{2},$$hence the action of $\mathbb{G}_{m}^{2}$ descends to a free action of $\mathbb{G}^{2}_{m}\slash \mathbb{G}_{m}\cong \mathbb{G}_{m}$. Hence:
\begin{equation}
\left[\left(\left[\frac{\mathcal{M}_{2}^{(\frac{\beta}{2},\frac{\beta}{2},2)}(\tilde{\tau})}{\mathbb{G}^{2}_{m}}\right],\mu'_{2}\circ i_{1}\right)\right]=\left[\left(\left[\frac{\mathcal{M}_{2}^{(\frac{\beta}{2},\frac{\beta}{2},2)}(\tilde{\tau})\slash (\mathbb{G}_{m}^{2}\slash \mathbb{G}_{m})}{\mathbb{G}_{m}}\right],\mu'_{2}\circ i_{1}\right)\right].
\end{equation}
Hence
\begin{align}\label{re-written2} 
&
\bar{\delta}^{(\frac{\beta}{2},\frac{\beta}{2},2)}_{2}(\tilde{\tau})=\frac{1}{2}\left[\left(\left[\frac{\mathcal{M}_{2}^{(\frac{\beta}{2},\frac{\beta}{2},2)}(\tilde{\tau})\slash (\mathbb{G}_{m}^{2}\slash \mathbb{G}_{m})}{\mathbb{G}_{m}}\right],\mu'_{2}\circ i_{1}\right)\right]-\frac{3}{4}\left[\left(\left[\frac{\mathcal{M}_{2}^{(\frac{\beta}{2},\frac{\beta}{2},2)}(\tilde{\tau})}{\mathbb{G}_{m}}\right],\mu'_{2}\circ i_{2}\right)\right].\notag\\
\end{align}
By equations \eqref{result} and \eqref{re-written2}:
\begin{align}\label{e2}
&
\epsilon_{2}^{(\frac{1}{2},\frac{1}{2})}(\tilde{\tau})=\frac{1}{2}\left[\left(\left[\frac{\mathcal{M}_{2}^{(\frac{\beta}{2},\frac{\beta}{2},2)}(\tilde{\tau})\slash (\mathbb{G}_{m}^{2}\slash \mathbb{G}_{m})}{\mathbb{G}_{m}}\right],\mu'_{2}\circ i_{1}\right)\right]-\frac{3}{4}\left[\left(\left[\frac{\mathcal{M}_{2}^{(\frac{\beta}{2},\frac{\beta}{2},2)}(\tilde{\tau})}{\mathbb{G}_{m}}\right],\mu'_{2}\circ i_{2}\right)\right]\notag\\
&
-\frac{1}{2}\displaystyle{\int}_{\mathcal{M}^{(\frac{\beta}{2},1)}(\tilde{\tau})}\left[\frac{\mathbb{P}(\operatorname{Ext}^{1}(E_{1},E_{1}))}{\mathbb{G}_{m}}\right].\notag\\
\end{align}

\subsubsection{Calclation of $\displaystyle{\int}_{\mathcal{M}^{(\frac{\beta}{2},1)}(\tilde{\tau})}\left[\frac{\mathbb{P}(\operatorname{Ext}^{1}(E_{1},E_{1}))}{\mathbb{G}_{m}}\right]$} Note that there exists a map from $\rho: \mathcal{M}_{2}^{(\frac{\beta}{2},\frac{\beta}{2},2)}\rightarrow \Delta\cong \mathcal{M}^{(\frac{\beta}{2},1)}(\tilde{\tau})$ which sends an object $E_{2}$ fitting in the exact sequence \eqref{non-split-2} to $E_{1}$. Moreover, given $E_{1}\in \mathcal{M}^{(\frac{\beta}{2},1)}(\tilde{\tau})$, the fiber of $\rho$ over $E_{1}$ is the set of points $p\in \mathcal{M}_{2}^{(\frac{\beta}{2},\frac{\beta}{2},2)}(\tilde{\tau})$ represented by exact sequence \eqref{non-split-2}. Now for any point $E_{1}\in \mathcal{M}^{(\frac{\beta}{2},1)}(\tilde{\tau})$, there exists a map $\pi_{Ext}:\rho^{-1} (\Delta)\rightarrow \operatorname{Ext}^{1}(E_{1},E_{1})$ which projects the points $p\in \rho^{-1}(E_{1})$ to their extension classes. Now consider an element of the extension class $\alpha\in \operatorname{Ext}^{1}(E_{1},E_{1})\backslash 0$. The pre-image, $\pi_{Ext}^{-1}(\alpha)$, consists of points $p\in \rho^{-1}(E_{1})$ which fall into the class, $\alpha$. There exists a surjective morphism:$$GL_{2}(\mathbb{C})\twoheadrightarrow \pi_{Ext}^{-1}(\alpha)$$ which is induced by the $GL_{2}(\mathbb{C})$-action on $\mathcal{M}_{2}^{(\frac{\beta}{2},\frac{\beta}{2},2)}(\tilde{\tau})$. The stabilizer group of the action of $GL_{2}(\mathbb{C})$ at point $p\in \mathcal{M}_{2}^{(\frac{\beta}{2},\frac{\beta}{2},2)}(\tilde{\tau})$ represented by $E_{2}$ sitting inside the exact sequence \ref{non-split-2} is given by $\mathbb{A}^{1}\rtimes \mathbb{G}_{m}$ hence $\operatorname{Stab}(GL_{2}(\mathbb{C}))_{p}$ for $p\in \rho^{-1}(E_{1})$ is given by $\mathbb{A}^{1}\rtimes \mathbb{G}_{m}$, then the fibers of the map $GL_{2}(\mathbb{C})\twoheadrightarrow \pi_{Ext}^{-1}(\alpha)$ are given by $\mathbb{A}^{1}\rtimes \mathbb{G}_{m}$. Now it is easy to relate the virtual poincare polynomial of $ \pi_{Ext}^{-1}(\alpha)$ to the virtual Poincar{\'e} polynomial of $GL_{2}(\mathbb{C})$.\\
In general given two algebraic spaces $X$ and $Y$ and a fibration $X\rightarrow Y$ with fibers $Z$ one has the following identity for their corresponding virtual poincare polynomials:$$\operatorname{P}_{t}(X)=\operatorname{P}_{t}(Y)\cdot \operatorname{P}_{t}(Z).$$Therefore, we obtain
\begin{equation}\label{first} 
\operatorname{P}_{t}(GL_{2}(\mathbb{C}))=\operatorname{P}_{t}(\mathbb{A}^{1}\rtimes \mathbb{G}_{m})\cdot\operatorname{P}_{t}(\pi_{Ext}^{-1}(\alpha)).
\end{equation}
On the other hand, for each $\alpha$, the free action of $\mathbb{G}^{2}_{m}\slash \mathbb{G}_{m}\cong \mathbb{G}_{m}$ on $\mathcal{M}_{2}^{(\frac{\beta}{2},\frac{\beta}{2},2)}(\tilde{\tau})$ induces a free $\mathbb{G}_{m}$-action on $\pi_{Ext}^{-1}(\alpha)$. Passing to the quotients via this action, we obtain a map$$\pi_{Ext}^{-1}(\alpha)\rightarrow \pi_{Ext}^{-1}(\alpha)\slash (\mathbb{G}^{2}_{m}\slash \mathbb{G}_{m}),$$whose fibers are given by $\mathbb{G}^{2}_{m}\slash \mathbb{G}_{m}\cong \mathbb{G}_{m}$. Hence we obtain the following relation for the virtual Poincar{\'e} polynomials:
\begin{align}\label{second}
&
 \operatorname{P}_{t}(\pi_{Ext}^{-1}(\alpha))=\operatorname{P}_{t}(\mathbb{G}^{2}_{m}\slash \mathbb{G}_{m})\cdot\operatorname{P}_{t}(\pi_{Ext}^{-1}(\alpha)\slash (\mathbb{G}^{2}_{m}\slash \mathbb{G}_{m})).\notag\\
\end{align}
By equations \eqref{first} and \eqref{second} and according to calculations in \cite[page 4]{a65} we obtain:
\begin{align}\label{first-second}
&
\operatorname{P}_{t}(\pi_{Ext}^{-1}(\alpha)\slash (\mathbb{G}^{2}_{m}\slash \mathbb{G}_{m}))=\frac{\operatorname{P}_{t}(GL_{2}(\mathbb{C}))}{\operatorname{P}_{t}(\mathbb{A}^{1}\rtimes \mathbb{G}_{m})\cdot \operatorname{P}_{t}(\mathbb{G}^{2}_{m}\slash \mathbb{G}_{m})}\notag\\
&
=\frac{(t^{4}-1)\cdot (t^{2}-1)\cdot t^{2}}{t^{2}\cdot(t^{2}-1)\cdot(t^{2}-1)}=t^{2}+1.\notag\\
\end{align}
The computation in Equation \eqref{first-second} means that for each $E_{1}\in \mathcal{M}^{(\frac{\beta}{2},1)}(\tilde{\tau})$ by passing to the quotients via the action of $\mathbb{G}^{2}_{m}\slash \mathbb{G}_{m}$, the map $\pi_{Ext}$ induces a map $$\pi_{(\mathbb{G}^{2}_{m}\slash \mathbb{G}_{m})}\mid_{E_{1}}: \rho^{-1}(E_{1})\slash (\mathbb{G}^{2}_{m}\slash \mathbb{G}_{m})\rightarrow \mathbb{P}(\operatorname{Ext}^{1}(E_{1},E_{1}))$$ whose fibers have virtual Poincar{\'e} polynomial as obtained in Equation \eqref{first-second}. Moreover, the Euler characteristic of the fibers is computed by evaluating their virtual Poincar{\'e} polynomial at $t=1$, hence for every such $E_{1}$ the fiber of the map $\pi_{(\mathbb{G}^{2}_{m}\slash \mathbb{G}_{m})}\mid_{E_{1}}$ has Euler characteristic equal to 2. We will use this factor of $2$ in Equation \eqref{simplify} below.

 Now back to Equation \eqref{e2}; Since $\epsilon_{2}^{(\frac{1}{2},\frac{1}{2})}(\tilde{\tau})$ is supported over virtual indecomposables, we can apply the Lie algebra homomorphism, $\tilde{\Psi}^{\mathcal{B}_{p}}$:
\begin{align}\label{psi-e2}
&
\tilde{\Psi}^{\mathcal{B}_{p}}(\epsilon_{2}^{(\frac{1}{2},\frac{1}{2})}(\tilde{\tau}))=\frac{1}{2}\cdot\chi^{na}\bigg(\left[\frac{\mathcal{M}_{2}^{(\frac{\beta}{2},\frac{\beta}{2},2)}(\tilde{\tau})\slash (\mathbb{G}_{m}^{2}\slash \mathbb{G}_{m})}{\mathbb{G}_{m}}\right], (\mu'_{2}\circ i_{1})^{*}\nu_{\mathfrak{M}^{(\beta,2)}_{2}}\bigg)\cdot\tilde{\lambda}^{(\beta,2)}\notag\\
&
-\frac{3}{4}\cdot\chi^{na}\bigg(\left[\frac{\mathcal{M}_{2}^{(\frac{\beta}{2},\frac{\beta}{2},2)}(\tilde{\tau})}{\mathbb{G}_{m}}\right], (\mu'_{2}\circ i_{2})^{*}\nu_{\mathfrak{M}^{(\beta,2)}_{2}}\bigg)\cdot\tilde{\lambda}^{(\beta,2)}\notag\\
&
-\frac{1}{2}\cdot\chi^{na}\bigg(\displaystyle{\int}_{\mathcal{M}^{(\frac{\beta}{2},1)}(\tilde{\tau})}\left[\frac{\mathbb{P}(\operatorname{Ext}^{1}(E_{1},E_{1}))}{\mathbb{G}_{m}}\right]d\mu_{m}, \mu^{*}\nu_{\mathfrak{M}^{(\beta,2)}_{2}}\bigg)\cdot\tilde{\lambda}^{(\beta,2)}.\notag\\
\end{align} 
Now we compute each term on the right hand side of Equation \eqref{psi-e2} separately. For the first term we obtain 
\begin{align}\label{simplify}
&
\frac{1}{2}\cdot\chi^{na}\bigg(\left[\frac{\mathcal{M}_{2}^{(\frac{\beta}{2},\frac{\beta}{2},2)}(\tilde{\tau})\slash (\mathbb{G}_{m}^{2}\slash \mathbb{G}_{m})}{\mathbb{G}_{m}}\right], (\mu'_{2}\circ i_{1})^{*}\nu_{\mathfrak{M}^{(\beta,2)}_{2}}\bigg)\cdot\tilde{\lambda}^{(\beta,2)}=\notag\\
&
\Bigg((-1)^{\operatorname{dim} (\mathfrak{M}^{(\beta,2)}_{st-ss,\mathcal{B}_{p}})-1}\cdot \frac{1}{2}\cdot \displaystyle{\int}_{E_{2}\in \mathcal{M}_{2}^{(\frac{\beta}{2},\frac{\beta}{2},2)}(\tilde{\tau})\slash (\mathbb{G}_{m}^{2}\slash \mathbb{G}_{m})}1 \left[\frac{\operatorname{Spec(\mathbb{C})}}{\mathbb{G}_{m}}\right]d\chi\Bigg)\cdot\tilde{\lambda}^{(\beta,2)}=\notag\\
&
\Bigg((-1)^{1}\cdot(-1)^{\operatorname{dim} (\mathfrak{M}^{(\beta_{k},\beta_{l},2)}_{2}(\tilde{\tau}))-1}\cdot \frac{1}{2}\cdot 2\cdot \displaystyle{\int}_{E_{1}\in \mathcal{M}^{(\frac{\beta}{2},1)}(\tilde{\tau})}\chi\bigg(\mathbb{P}(\operatorname{Ext}^{1}(E_{1},E_{1}))\bigg) d\chi\Bigg)\cdot\tilde{\lambda}^{(\beta,2)},\notag\\
\end{align}
For the second term on the right hand side of \eqref{psi-e2} we use the property (2) of the stack functions in \eqref{property2}:$$\left[\frac{\mathcal{M}_{2}^{(\frac{\beta}{2},\frac{\beta}{2},2)}(\tilde{\tau})}{\mathbb{G}_{m}}\right]=\chi(\mathcal{M}_{2}^{(\frac{\beta}{2},\frac{\beta}{2},2)}(\tilde{\tau}))\cdot\left[\frac{\operatorname{Spec}(\mathbb{C})}{\mathbb{G}_{m}}\right]$$ 
The action of $\mathbb{G}_{m}\cong \mathbb{G}^{2}_{m}\slash \mathbb{G}_{m}$ is free on $\mathcal{M}^{(\frac{\beta}{2},\frac{\beta}{2},2)}_{2}(\tilde{\tau})$ and since the $\mathbb{G}_{m}$-fixed locus via this action is empty we have that$$\chi(\mathcal{M}_{2}^{(\frac{\beta}{2},\frac{\beta}{2},2)}(\tilde{\tau}))=0.$$Therefore, the second term on the right hand side of Equation \eqref{psi-e2} vanishes:
\begin{align}
&
-\frac{3}{4}\cdot\chi^{na}\bigg(\left[\frac{\mathcal{M}_{2}^{(\frac{\beta}{2},\frac{\beta}{2},2)}(\tilde{\tau})}{\mathbb{G}_{m}}\right], (\mu'_{2}\circ i_{2})^{*}\nu_{\mathfrak{M}^{(\beta,2)}_{2}}\bigg)\cdot\tilde{\lambda}^{(\beta,2)}=0.\notag\\
\end{align}
Finally, by equations \eqref{simplify} and \eqref{psi-e2} we obtain:
\begin{align}\label{psi-e2-2}
\tilde{\Psi}^{\mathcal{B}_{p}}(\epsilon_{2}^{(\frac{1}{2},\frac{1}{2})}(\tilde{\tau}))=&
-\Bigg(\displaystyle{\int}_{E_{1}\in \mathcal{M}^{(\frac{\beta}{2},1)}(\tilde{\tau})}\chi\bigg(\mathbb{P}(\operatorname{Ext}^{1}(E_{1},E_{1}))\bigg) d\chi\Bigg)\cdot\tilde{\lambda}^{(\beta,2)}\notag\\
&
-\frac{1}{2}\cdot\Bigg(\displaystyle{\int}_{E_{1}\in \mathcal{M}^{(\frac{\beta}{2},1)}(\tilde{\tau})}\chi\bigg(\mathbb{P}(\operatorname{Ext}^{1}(E_{1},E_{1}))\bigg) d\chi\Bigg)\cdot\tilde{\lambda}^{(\beta,2)}\notag\\
&
=-\frac{3}{2}\cdot(-1)^{\operatorname{dim} (\mathfrak{M}^{(\beta_{k},\beta_{l},2)}_{2}(\tilde{\tau}))-1}\cdot \Bigg(\displaystyle{\int}_{E_{1}\in \mathcal{M}^{(\frac{\beta}{2},1)}(\tilde{\tau})}\chi\bigg(\mathbb{P}(\operatorname{Ext}^{1}(E_{1},E_{1}))\bigg) d\chi\Bigg)\cdot\tilde{\lambda}^{(\beta,2)}.\notag\\
\end{align}
\subsection{Computation of $\tilde{\Psi}^{\mathcal{B}_{p}}(\epsilon_{3}^{(\frac{1}{2},\frac{1}{2})}(\tilde{\tau}))$}
By equations \eqref{result} and \eqref{delta_i}, we re-write $\epsilon_{3}^{(\frac{1}{2},\frac{1}{2})}(\tilde{\tau})$ as follows:
\begin{align}
&
\epsilon_{3}^{(\frac{1}{2},\frac{1}{2})}(\tilde{\tau})=\frac{1}{2}\left[\left(\left[\frac{\mathcal{M}_{3}^{(\frac{\beta}{2},\frac{\beta}{2},2)}(\tilde{\tau})}{\mathbb{G}^{2}_{m}}\right],\mu'_{3}\circ i_{1}\right)\right]-\frac{3}{4}\left[\left(\left[\frac{\mathcal{M}_{3}^{(\frac{\beta}{2},\frac{\beta}{2},2)}(\tilde{\tau})}{\mathbb{G}_{m}}\right],\mu'_{3}\circ i_{2}\right)\right]\notag\\
&
-\frac{1}{2}\left[\frac{\mathcal{M}^{(\frac{\beta}{2},1)}(\tilde{\tau})\times \mathcal{M}^{(\frac{\beta}{2},1)}(\tilde{\tau})\backslash \Delta}{\mathbb{G}^{2}_{m}}\right].\notag\\
\end{align}
The set of $T^{GL_{2}(\mathbb{C})}/\mathbb{G}_{m}$-fixed points is given by the image of the map $f: \mathcal{M}^{(\frac{\beta}{2},1)}(\tilde{\tau})\times \mathcal{M}^{(\frac{\beta}{2},1)}(\tilde{\tau})\backslash \Delta\rightarrow \mathcal{M}_{3}^{(\frac{\beta}{2},\frac{\beta}{2},2)}(\tilde{\tau})$ such that for every $E_{1}\ncong E_{3}$, sends $(E_{1},E_{3})$ to $E_{1}\oplus E_{3}$. Rewrite
\begin{align}\label{darghooz}
&
\left[\frac{\mathcal{M}_{3}^{(\frac{\beta}{2},\frac{\beta}{2},2)}(\tilde{\tau})}{\mathbb{G}^{2}_{m}}\right]=\left[\frac{\operatorname{Im}(f)}{\mathbb{G}^{2}_{m}}\right]+\left[\frac{\mathcal{M}_{3}^{(\frac{\beta}{2},\frac{\beta}{2},2)}(\tilde{\tau})\backslash \operatorname{Im}(f)}{\mathbb{G}^{2}_{m}}\right],\notag\\
\end{align}
Where \eqref{darghooz} is obtained by property (1) of stack functions in \eqref{splitt-motiv}. Now by injectivity of the map $f$: 
\begin{align}
&
\left[\frac{\operatorname{Im}(f)}{\mathbb{G}^{2}_{m}}\right]=\left[\frac{\mathcal{M}^{(\frac{\beta}{2},1)}(\tilde{\tau})\times \mathcal{M}^{(\frac{\beta}{2},1)}(\tilde{\tau})\backslash \Delta}{\mathbb{G}^{2}_{m}}\right]\notag\\
\end{align}
hence
\begin{align}
&
\epsilon_{3}^{(\frac{1}{2},\frac{1}{2})}(\tilde{\tau})=\frac{1}{2}\left[\left(\left[\frac{\mathcal{M}^{(\frac{\beta}{2},1)}(\tilde{\tau})\times \mathcal{M}^{(\frac{\beta}{2},1)}(\tilde{\tau})\backslash \Delta}{\mathbb{G}^{2}_{m}}\right],\mu'_{3}\circ i_{1}\right)\right]+\frac{1}{2}\left[\left(\left[\frac{\mathcal{M}_{3}^{(\frac{\beta}{2},\frac{\beta}{2},2)}(\tilde{\tau})\backslash \operatorname{Im}(f)}{\mathbb{G}^{2}_{m}}\right],\mu'_{3}\circ i_{1}\right)\right]\notag\\
&
-\frac{3}{4}\left[\left(\left[\frac{\mathcal{M}_{3}^{(\frac{\beta}{2},\frac{\beta}{2},2)}(\tilde{\tau})}{\mathbb{G}_{m}}\right],\mu'_{3}\circ i_{2}\right)\right]-\frac{1}{2}\left[\left(\left[\frac{\mathcal{M}^{(\frac{\beta}{2},1)}(\tilde{\tau})\times \mathcal{M}^{(\frac{\beta}{2},1)}(\tilde{\tau})\backslash \Delta}{\mathbb{G}^{2}_{m}}\right],\mu'\right)\right]\notag\\
&
=\frac{1}{2}\left[\left(\left[\frac{\mathcal{M}_{3}^{(\frac{\beta}{2},\frac{\beta}{2},2)}(\tilde{\tau})\backslash \operatorname{Im}(f)}{\mathbb{G}^{2}_{m}}\right],\mu'_{3}\circ i_{1}\right)\right]-\frac{3}{4}\left[\left(\left[\frac{\mathcal{M}_{3}^{(\frac{\beta}{2},\frac{\beta}{2},2)}(\tilde{\tau})}{\mathbb{G}_{m}}\right],\mu'_{3}\circ i_{2}\right)\right]\notag\\
\end{align}
There exists a free action of $\mathbb{G}^{2}_{m}$ on $\mathcal{M}_{3}^{(\frac{\beta}{2},\frac{\beta}{2},2)}(\tilde{\tau})$. Moreover, there exists a trivial action of $\mathbb{G}_{m}\subset \mathbb{G}^{2}_{m}$ on $\mathcal{M}_{3}^{(\frac{\beta}{2},\frac{\beta}{2},2)}(\tilde{\tau})$ where $\mathbb{G}_{m}$ is given by the corresponding subgroup of diagonal matrices. Hence the free $\mathbb{G}^{2}_{m}$ action on $ \mathcal{M}_{3}^{(\frac{\beta}{2},\frac{\beta}{2},2)}(\tilde{\tau})$ is reduced to a free action of $\mathbb{G}^{2}_{m}\slash \mathbb{G}_{m}\cong \mathbb{G}_{m}$. Therefore:
\begin{align}
&
\left[\left(\left[\frac{\mathcal{M}_{3}^{(\frac{\beta}{2},\frac{\beta}{2},2)}(\tilde{\tau})\backslash \operatorname{Im}(f)}{\mathbb{G}^{2}_{m}}\right],\mu'_{3}\circ i_{1}\right)\right]=
\left[\left(\left[\frac{\left(\mathcal{M}_{3}^{(\frac{\beta}{2},\frac{\beta}{2},2)}(\tilde{\tau})\backslash \operatorname{Im}(f)\right)\slash (\mathbb{G}^{2}_{m}\slash \mathbb{G}_{m})}{\mathbb{G}_{m}}\right],\mu'_{3}\circ i_{1}\right)\right].\notag\\
\end{align}
Denote \footnote{The scheme $\mathcal{A}$ is the analog of $\tilde{Q}^{(2,2m)}_{2}$ in \cite[Lemma 5.6]{a37} when the sheaves $F$ composing the objects $E:=(F,V,\phi)$ have zero dimensional support with length $2m$. In that situation, roughly speaking, the scheme $\mathcal{M}^{(\beta,2)}_{3}(\tilde{\tau})$ is replaced by a subscheme of a product of Quot schemes. Here we are following an almost identical strategy.}
$$\mathcal{A}:=\left(\mathcal{M}_{3}^{(\frac{\beta}{2},\frac{\beta}{2},2)}(\tilde{\tau})\backslash \operatorname{Im}(f)\right)\slash (\mathbb{G}^{2}_{m}\slash \mathbb{G}_{m}).$$
With this notation
\begin{align}
&
\epsilon_{3}^{(\frac{1}{2},\frac{1}{2})}(\tilde{\tau})=\frac{1}{2}\left[\left(\left[\frac{\mathcal{A}}{\mathbb{G}_{m}}\right],\mu'_{3}\circ i_{1}\right)\right]-\frac{3}{4}\left[\left(\left[\frac{\mathcal{M}_{3}^{(\frac{\beta}{2},\frac{\beta}{2},2)}(\tilde{\tau})}{\mathbb{G}_{m}}\right],\mu'_{3}\circ i_{2}\right)\right].\notag\\
\end{align} 
Since $\epsilon_{3}^{(\frac{1}{2},\frac{1}{2})}(\tilde{\tau})$ is supported over the virtual indecomposables, one can apply the Lie algebra morphism $\tilde{\Psi}^{\mathcal{B}_{p}}$:
\begin{align}\label{psi-e3}
&
\tilde{\Psi}^{\mathcal{B}_{p}}(\epsilon_{3}^{(\frac{1}{2},\frac{1}{2})}(\tilde{\tau}))=\notag\\
&
\frac{1}{2}\cdot\chi^{na}\bigg(\left[\frac{\mathcal{A}}{\mathbb{G}_{m}}\right],(\mu'_{3}\circ i_{1})^{*}\nu_{\mathfrak{M}^{(\beta,3)}_{3}}\bigg)\cdot\tilde{\lambda}^{(\beta,2)}-\frac{3}{4}\cdot\chi^{na}\bigg(\left[\frac{\mathcal{M}_{3}^{(\frac{\beta}{2},\frac{\beta}{2},2)}(\tilde{\tau})}{\mathbb{G}_{m}}\right],(\mu'_{3}\circ i_{2})^{*}\nu_{\mathfrak{M}^{(\beta,3)}_{3}}\bigg)\cdot\tilde{\lambda}^{(\beta,2)}\notag\\
&
=\bigg((-1)^{1}\cdot\frac{1}{2}\cdot \chi(\mathcal{A})-(-1)^{1}\cdot\frac{3}{4}\cdot \chi(\mathcal{M}_{3}^{(\frac{\beta}{2},\frac{\beta}{2},2)}(\tilde{\tau}))\bigg)\cdot\tilde{\lambda}^{(\beta,2)}.\notag\\
\end{align}
\subsubsection{Calculation of $ \chi(\mathcal{A})$ and $\chi(\mathcal{M}_{3}^{(\frac{\beta}{2},\frac{\beta}{2},2)}(\tilde{\tau}))$}
There exists a map $\pi_{13}: \mathcal{M}_{3}^{(\frac{\beta}{2},\frac{\beta}{2},2)}(\tilde{\tau})\rightarrow \mathcal{M}^{(\frac{\beta}{2},1)}(\tilde{\tau})\times \mathcal{M}^{(\frac{\beta}{2},1)}(\tilde{\tau})\backslash \Delta$ which sends a point $E_{2}\cong E_{1}\oplus E_{3}$ to $(E_{1},E_{3})$. Fix such  $(E_{1},E_{3})\in \mathcal{M}^{(\frac{\beta}{2},1)}(\tilde{\tau})\times \mathcal{M}^{(\frac{\beta}{2},1)}(\tilde{\tau})\backslash \Delta$. The set of points in $\pi_{13}^{-1}(E_{1},E_{3})$ are those $E_{2}$ which can be written as $E_{2}\cong E_{1}\oplus E_{3}$. Moreover, every point in $\pi_{13}^{-1}(E_{1},E_{3})\mid_{\operatorname{Im}(f)}$ can be determined by $E_{2}\cong E_{1}\oplus E_{3}$ or $E_{2}\cong E_{3}\oplus E_{1}$, i.e by a permutation and an isomorphism, hence there exists an induced map $\pi'_{13}:\mathcal{M}_{3}^{(\frac{\beta}{2},\frac{\beta}{2},2)}(\tilde{\tau})\rightarrow \operatorname{Sym}(\mathcal{M}^{(\frac{\beta}{2},1)}(\tilde{\tau})\times \mathcal{M}^{(\frac{\beta}{2},1)}(\tilde{\tau}))$. The action of $GL_{2}(\mathbb{C})$ on $\mathcal{M}_{3}^{(\frac{\beta}{2},\frac{\beta}{2},2)}(\tilde{\tau})$ is restricted to an action on the fiber of $\pi'_{13}$ over $(E_{1},E_{3})$.

The stabilizer group of the points in $(\pi'_{13})^{-1}(E_{1},E_{3})$ is given by $\mathbb{G}^{2}_{m}$. Hence, one concludes that for every $(E_{1},E_{3})\in \operatorname{Sym}(\mathcal{M}^{(\frac{\beta}{2},1)}(\tilde{\tau})\times \mathcal{M}^{(\frac{\beta}{2},1)}(\tilde{\tau}))$ there exists a map $GL_{2}(\mathbb{C})\rightarrow \pi_{13}^{-1}(E_{1},E_{3})$ whose fiber over each point $p\in (\pi'_{13})^{-1}(E_{1},E_{3})$ is given by $\mathbb{G}^{2}_{m}$. Hence, over $(\pi'_{13})^{-1}(E_{1},E_{3})\backslash \operatorname{Im}(f)$, the action of $GL_{2}(\mathbb{C})$ is restricted to $GL_{2}(\mathbb{C})\backslash (\mathbb{G}^{2}_{m}\bigcup(\mathbb{G}^{2}_{m})^{*})$ where $\mathbb{G}^{2}_{m}$ is given by the diagonal matrices of the form:$$\mathbb{G}^{2}_{m}=\bigg\{\left(\begin{array}{cc}  g_{1}&0\\
 0&g_{2}
 \end{array}\right)
\mid g_{1}, g_{2}\in \mathbb{C}^{*}\bigg\},$$and $(\mathbb{G}^{2}_{m})^{*}$ is given by the anti-diagonal matrices of the form:$$(\mathbb{G}^{2}_{m})^{*}=\bigg\{\left(\begin{array}{cc}  0&g_{1}\\
 g_{2}&0
 \end{array}\right)\mid g_{1},g_{2}\in \mathbb{C}^{*}\bigg\}
.$$
Therefore, there exists a map $GL_{2}(\mathbb{C})\backslash (\mathbb{G}^{2}_{m}\bigcup(\mathbb{G}^{2}_{m})^{*})\rightarrow (\pi'_{13})^{-1}(E_{1},E_{3})\backslash \operatorname{Im}(f)$ whose fiber over each point $p\in (\pi'_{13})^{-1}(E_{1},E_{3})\backslash \operatorname{Im}(f)$ is given by $\mathbb{G}^{2}_{m}$. Now compute the virtual Poincar{\'e} polynomial of $(\pi'_{13})^{-1}(E_{1},E_{3})\backslash \operatorname{Im}(f)$.
\begin{align}
&
\operatorname{P}_{t}((\pi'_{13})^{-1}(E_{1},E_{3})\backslash \operatorname{Im}(f))=\frac{\operatorname{P}_{t}(GL_{2}(\mathbb{C})\backslash (\mathbb{G}^{2}_{m}\bigcup(\mathbb{G}^{2}_{m})^{*}))}{\operatorname{P}_{t}(\mathbb{G}^{2}_{m})}\notag\\
&
=\frac{(t^{4}-1)\cdot (t^{2}-1)\cdot t^{2}-(t^{2}-1)^{2}}{(t^{2}-1)^{2}}=t^{4}+t^{2}-1.\notag\\
\end{align} 
The occurrence of the term $(t^{2}-1)^{2}$ in the numerator is due to the free action of $\mathbb{G}^{2}_{m}\bigcup(\mathbb{G}^{2}_{m})^{*}$ on $\operatorname{Im}(f)$. Note that $(\pi'_{13})^{-1}(E_{1},E_{3})\backslash \operatorname{Im}(f)$ is a $\mathbb{G}_{m}$ bundle over $(\pi'_{13})^{-1}(E_{1},E_{3})\backslash \operatorname{Im}(f)\slash \mathbb{G}_{m}$, hence: 
\begin{align}\label{fibers}
&
\operatorname{P}_{t}((\pi'_{13})^{-1}(E_{1},E_{3})\backslash \operatorname{Im}(f)\slash \mathbb{G}_{m})=\frac{\operatorname{P}_{t}((\pi'_{13})^{-1}(E_{1},E_{3})\backslash \operatorname{Im}(f))}{\operatorname{P}_{t}(\mathbb{G}_{m})}\notag\\
&
=\frac{t^{4}+t^{2}-1}{(t^{2}-1)}=t^{2}+2.\notag\\
\end{align}
Therefore, for every $(E_{1},E_{3})\in \operatorname{Sym}(\mathcal{M}^{(\frac{\beta}{2},1)}(\tilde{\tau})\times \mathcal{M}^{(\frac{\beta}{2},1)}(\tilde{\tau}))$, by evaluating \eqref{fibers} at $t=1$, the fibers $(\pi'_{13})^{-1}(E_{1},E_{3})\backslash\operatorname{Im}(f)$ have Euler characteristic 3.
On the other hand, there exists a bijective map between the set of points in the $\mathbb{G}^{2}_{m}\slash \mathbb{G}_{m}$-fixed locus of $\mathcal{M}_{3}^{(\frac{\beta}{2},\frac{\beta}{2},2)}(\tilde{\tau})$ and the set of points in $\operatorname{Im}(f)\cong \mathcal{M}^{(\frac{\beta}{2},1)}(\tilde{\tau})\times \mathcal{M}^{(\frac{\beta}{2},1)}(\tilde{\tau})\backslash \Delta$. Denote by $d_{3}=\operatorname{dim} (\mathfrak{M}^{(\beta_{k},\beta_{l},3)}_{3}(\tilde{\tau}))-1$. Now rewrite Equation \eqref{psi-e3}:
\begin{align}\label{psi-e3'}
\tilde{\Psi}^{\mathcal{B}_{p}}(\epsilon_{3}^{(\frac{1}{2},\frac{1}{2})}(\tilde{\tau}))&=\bigg((-1)^{1}\cdot\frac{1}{2}\cdot \chi(\mathcal{A})-(-1)^{1}\cdot\frac{3}{4}\cdot \chi(\mathcal{M}_{3}^{(\frac{\beta}{2},\frac{\beta}{2},2)}(\tilde{\tau}))\bigg)\cdot(-1)^{d_{3}}\cdot\tilde{\lambda}^{(\beta,2)}\notag\\
&
=\bigg(-\frac{1}{2}\cdot\displaystyle{\int}_{\mathcal{A}}1d\chi_{\mathcal{A}}+\frac{3}{4}\displaystyle{\int}_{\mathcal{M}_{3}^{(\frac{\beta}{2},\frac{\beta}{2},2)}(\tilde{\tau})}1d\chi_{\mathcal{M}_{3}^{(\frac{\beta}{2},\frac{\beta}{2},2)}(\tilde{\tau})}\bigg)\cdot(-1)^{d_{3}}\cdot\tilde{\lambda}^{(\beta,2)}=\notag\\
&
-\frac{1}{2}\cdot 3\cdot \bigg(\displaystyle{\int}_{\operatorname{Sym}(\mathcal{M}^{(\frac{\beta}{2},1)}(\tilde{\tau})\times \mathcal{M}^{(\frac{\beta}{2},1)}(\tilde{\tau}))}1d\chi_{\operatorname{Sym}(\mathcal{M}^{(\frac{\beta}{2},1)}(\tilde{\tau})\times \mathcal{M}^{(\frac{\beta}{2},1)}(\tilde{\tau}))}\bigg)\cdot(-1)^{d_{3}}\cdot\tilde{\lambda}^{(\beta,2)}\notag\\
&
+\frac{3}{4}\cdot\bigg(\displaystyle{\int}_{\mathcal{M}^{(\beta_{k},1)}(\tilde{\tau})\times \mathcal{M}^{(\beta_{l},1)}(\tilde{\tau})\backslash \Delta}1d\chi_{\mathfrak{S}^{(\beta_{k},1)}\times \mathfrak{S}^{(\beta_{l},1)}\backslash \Delta}\bigg)\cdot(-1)^{d_{3}}\cdot\tilde{\lambda}^{(\beta,2)}=\notag\\
&
-\frac{1}{2}\cdot 3\cdot \frac{1}{2}\bigg(\displaystyle{\int}_{\mathcal{M}^{(\beta_{k},1)}(\tilde{\tau})\times \mathcal{M}^{(\beta_{l},1)}(\tilde{\tau})\backslash \Delta}1d\chi_{\mathcal{M}^{(\beta_{k},1)}(\tilde{\tau})\times \mathcal{M}^{(\beta_{l},1)}(\tilde{\tau})\backslash \Delta}\bigg)\cdot(-1)^{d_{3}}\cdot\tilde{\lambda}^{(\beta,2)}\notag\\
&
+\frac{3}{4}\cdot\bigg(\displaystyle{\int}_{\mathcal{M}^{(\beta_{k},1)}(\tilde{\tau})\times \mathcal{M}^{(\beta_{l},1)}(\tilde{\tau})\backslash \Delta}1d\chi_{\mathcal{M}^{(\beta_{k},1)}(\tilde{\tau})\times \mathcal{M}^{(\beta_{l},1)}(\tilde{\tau})\backslash \Delta}\bigg)\cdot(-1)^{d_{3}}\cdot\tilde{\lambda}^{(\beta,2)}=0.\notag\\
\end{align}
\subsection{Computation of $\tilde{\Psi}^{\mathcal{B}_{p}}(\epsilon_{3}^{(k,l)}(\tilde{\tau}))$}
Throughout this subsection by earlier construction $\beta_{k}\neq\beta_{l}$ and $\beta_{k}+\beta_{l}=\beta$. The action of $GL_{2}(\mathbb{C})$ restricts to an action of $\mathbb{G}^{2}_{m}$ on $\mathcal{M}_{3}^{(\beta_{k},\beta_{l},2)}(\tilde{\tau})$ hence we obtain
\begin{equation}
\bar{\delta}^{(\beta_{k},\beta_{l})}_{3}(\tilde{\tau})=\left[\frac{\mathcal{M}_{3}^{(\beta_{k},\beta_{l},2)}(\tilde{\tau})}{\mathbb{G}^{2}_{m}}\right].
\end{equation}
On the other hand, it is easily seen that there exists a bijective map $f:\mathcal{M}_{3}^{(\beta_{k},\beta_{l},2)}(\tilde{\tau})\rightarrow \mathcal{M}^{(\beta_{k},1)}(\tilde{\tau})\times \mathcal{M}^{(\beta_{l},1)}(\tilde{\tau})$ which takes $E_{2}\cong E_{1}\oplus E_{3}$  to $(E_{1},E_{3})$ where $E_{1}\ncong E_{3}$. Hence, we obtain:
\begin{equation}
\left[\frac{\mathcal{M}_{3}^{(\beta_{k},\beta_{l},2)}(\tilde{\tau})}{\mathbb{G}^{2}_{m}}\right]=\left[\frac{\mathcal{M}^{(\beta_{k},1)}(\tilde{\tau})\times \mathcal{M}^{(\beta_{l},1)}(\tilde{\tau})}{\mathbb{G}^{2}_{m}}\right].
\end{equation} 
Therefore, for every pair $(\beta_{k},\beta_{l})$ by definition of $\epsilon_{3}^{(k,l)}(\tilde{\tau})$ one obtains:
\begin{align}
&
\epsilon_{3}^{(k,l)}(\tilde{\tau})=\frac{1}{2}\cdot\bigg(\left[\frac{\mathcal{M}_{3}^{(\beta_{k},\beta_{l},2)}(\tilde{\tau})}{\mathbb{G}^{2}_{m}}\right]-\left[\frac{\mathcal{M}^{(\beta_{k},1)}(\tilde{\tau})\times \mathcal{M}^{(\beta_{l},1)}(\tilde{\tau})}{\mathbb{G}^{2}_{m}}\right]\bigg)=0,
\end{align}
hence:
\begin{equation}
\sum_{\substack{\beta_{k}+\beta_{l}=\beta \\ \beta_{k}\neq \beta_{l}}}^{o}\tilde{\Psi}^{\mathcal{B}_{p}}(\epsilon_{3}^{(k,l)}(\tilde{\tau}))=0.
\end{equation} 
\subsection{Computation of $\tilde{\Psi}^{\mathcal{B}_{p}}(\epsilon_{4}^{(\frac{1}{2},\frac{1}{2})}(\tilde{\tau}))$} By construction, there exists a bijective map between the set of points in $\mathcal{M}_{4}^{(\frac{\beta}{2},\frac{\beta}{2},2)}(\tilde{\tau})$ and $\mathcal{M}^{(\frac{\beta}{2},1)}(\tilde{\tau})$, hence one rewrites $\epsilon_{4}^{(\frac{1}{2},\frac{1}{2})}(\tilde{\tau})$ directly as follows:
\begin{align}
&
\epsilon_{4}^{(\frac{1}{2},\frac{1}{2})}(\tilde{\tau})=\frac{1}{2}\cdot\left[\left(\left[\frac{\mathcal{M}_{4}^{(\frac{\beta}{2},\frac{\beta}{2},2)}(\tilde{\tau})}{\mathbb{G}^{2}_{m}}\right],\mu'_{4}\circ i_{1}\right)\right]-\frac{3}{4}\cdot\left[\left(\left[\frac{\mathcal{M}_{4}^{(\frac{\beta}{2},\frac{\beta}{2},2)}(\tilde{\tau})}{\mathbb{G}_{m}}\right],\mu'_{4}\circ i_{2}\right)\right]\notag\\
&
-\frac{1}{2}\cdot\left[\left(\left[\frac{\mathcal{M}^{(\frac{\beta}{2},1)}(\tilde{\tau})}{\mathbb{A}^{1}\rtimes\mathbb{G}^{2}_{m}}\right],\mu'\right)\right]=
\frac{1}{2}\cdot\left[\left(\left[\frac{\mathcal{M}^{(\frac{\beta}{2},1)}(\tilde{\tau})}{\mathbb{G}^{2}_{m}}\right],\mu'_{4}\circ i_{1}\right)\right]-\frac{3}{4}\cdot\left[\left(\left[\frac{\mathcal{M}^{(\frac{\beta}{2},1)}(\tilde{\tau})}{\mathbb{G}_{m}}\right],\mu'_{4}\circ i_{2}\right)\right]\notag\\
&
-\frac{1}{2}\cdot\left[\left(\left[\frac{\mathcal{M}^{(\frac{\beta}{2},1)}(\tilde{\tau})}{\mathbb{G}^{2}_{m}}\right],\mu'\circ i_{1}\right)\right]+\frac{1}{2}\left[\left(\left[\frac{\mathcal{M}^{(\frac{\beta}{2},1)}(\tilde{\tau})}{\mathbb{G}_{m}}\right],\mu'\circ i_{2}\right)\right]\notag\\
\end{align}
which simplifies to:
\begin{align}
&
\epsilon_{4}^{(\frac{1}{2},\frac{1}{2})}(\tilde{\tau})=-\frac{1}{4}\left[\left(\left[\frac{\mathcal{M}^{(\frac{\beta}{2},1)}(\tilde{\tau})}{\mathbb{G}_{m}}\right],\mu'\circ i_{2}\right)\right].\notag\\
\end{align}
Now apply the Lie algebra homomorphism and obtain
\begin{align}\label{psi-e4}
&
\tilde{\Psi}^{\mathcal{B}_{p}}(\epsilon_{4}^{(\frac{1}{2},\frac{1}{2})}(\tilde{\tau}))=-\frac{1}{4}\cdot\chi^{na}\bigg(\left[\frac{\mathcal{M}^{(\frac{\beta}{2},1)}(\tilde{\tau})}{\mathbb{G}_{m}}\right], (\mu'\circ i_{2})^{*}\nu_{\mathfrak{M}^{(\beta,2)}_{4}}\bigg)\cdot\tilde{\lambda}^{(\beta,2)}\notag\\
&
=(-1)^{1}\cdot (-1)^{\operatorname{dim} (\mathfrak{M}^{(\beta_{k},\beta_{l},3)}_{4}(\tilde{\tau}))-1}\cdot(-\frac{1}{4})\cdot \chi(\mathcal{M}^{(\frac{\beta}{2},1)}(\tilde{\tau}))\cdot\tilde{\lambda}^{(\beta,2)}\notag\\
\end{align} 
\section{Computation of $\tilde{\Psi}^{\mathcal{B}_{p}}(\bar{\epsilon}^{(\beta,2)}(\tilde{\tau}))$}
Finally in order to compute $\tilde{\Psi}^{\mathcal{B}_{p}}(\bar{\epsilon}^{(\beta,2)}(\tilde{\tau}))$ in Equation \eqref{sum-result}, we add the contributions coming from $\epsilon_{i}^{(k,l)}$ (for all possible choices of $\beta_{k}$ and $\beta_{l}$) to $\tilde{\Psi}^{\mathcal{B}_{p}}(\bar{\delta}^{(\beta,2)}_{s}(\tilde{\tau}))$, i.e:
\begin{align}
&
\tilde{\Psi}^{\mathcal{B}_{p}}(\bar{\epsilon}^{(\beta,2)})(\tilde{\tau})=\sum_{\beta_{k}+\beta_{l}=\beta}^{o}\bigg(\tilde{\Psi}^{\mathcal{B}_{p}}(\epsilon_{1}^{(k,l)}(\tilde{\tau}))\bigg)+\tilde{\Psi}^{\mathcal{B}_{p}}(\epsilon_{2}^{(\frac{1}{2},\frac{1}{2})}(\tilde{\tau}))+\tilde{\Psi}^{\mathcal{B}_{p}}(\epsilon_{3}^{(\frac{1}{2},\frac{1}{2})}(\tilde{\tau}))\notag\\
&
+\sum_{\substack{\beta_{k}+\beta_{l}=\beta\\ \beta_{k}\neq \beta_{l}}}^{o}\bigg(\tilde{\Psi}^{\mathcal{B}_{p}}(\epsilon_{3}^{(k,l)}(\tilde{\tau}))\bigg)+\tilde{\Psi}^{\mathcal{B}_{p}}(\epsilon_{4}^{(\frac{1}{2},\frac{1}{2})}(\tilde{\tau})).\notag\\ 
\end{align}
Let $d_{i}=\operatorname{dim} (\mathfrak{M}^{(\beta_{k},\beta_{l},3)}_{i}(\tilde{\tau}))-1$ for $i=1,\cdots,4$. By equations \eqref{psi-e1}, \eqref{psi-e2-2},\eqref{psi-e3'},\eqref{psi-e4} we obtain:
\begin{align}\label{total}
&
\tilde{\Psi}^{\mathcal{B}_{p}}(\bar{\epsilon}^{(\beta,2)})(\tilde{\tau})=\chi^{na}(\left[\frac{\mathcal{M}^{(\beta,2)}_{s}(\tilde{\tau})}{GL_{2}(\mathbb{C})}\right],\nu_{\mathfrak{M}^{(\beta,2)}_{s}})\cdot\tilde{\lambda}^{(\beta,2)}\notag\\
&
+\sum_{\beta_{k}+\beta_{l}=\beta}^{o}\bigg(\frac{-1}{2}\cdot (-1)^{d_{1}}\bigg(\displaystyle{\int}_{(E_{1},E_{3})\in\mathcal{M}^{(\beta_{k},1)}(\tilde{\tau})\times \mathcal{M}^{(\beta_{l},1)}(\tilde{\tau})\backslash \Delta}\chi(\mathbb{P}(\operatorname{Ext}^{1}(E_{3},E_{1})))d\chi\bigg)\cdot \tilde{\lambda}^{(\beta,2)}\bigg)\notag\\
&
-\frac{3}{2}\cdot(-1)^{d_{2}}\cdot\Bigg(\displaystyle{\int}_{E_{1}\in \mathcal{M}^{(\frac{\beta}{2},1)}(\tilde{\tau})}\chi\bigg(\mathbb{P}(\operatorname{Ext}^{1}(E_{1},E_{1}))\bigg) d\chi\Bigg)\cdot\tilde{\lambda}^{(\beta,2)}
+\frac{1}{4}\cdot(-1)^{d_{4}}\cdot \chi(\mathcal{M}^{(\frac{\beta}{2},1)}_{s}(\tilde{\tau}))\cdot\tilde{\lambda}^{(\beta,2)}.
\notag\\
\end{align}
\section{Final computation of the invariants}
By the wall crossing computation of Joyce and Song in \cite{a30} (Equation 13.31):
\begin{align}\label{sub2}
&
\chi(\mathcal{M}^{(\frac{\beta}{2},1)}_{s}(\tilde{\tau}))=\sum_{1\leq l,\beta_{1}+\cdots+\beta_{l}=\frac{\beta}{2}}\Bigg[\frac{(1)}{l!}\cdot\bar{\chi}_{\mathcal{B}_{p}}((0,2),(\beta_{1},0))\cdot\notag\\
&
 \prod_{i=1}^{l}\bigg(\overline{DT}^{\beta_{i}}(\tau)\cdot \bar{\chi}_{\mathcal{B}_{p}}((\beta_{1}+\cdots+\beta_{i-1},2),(\beta_{i},0))\cdot (-1)^{\bar{\chi}_{\mathcal{B}_{p}}((0,2),(\beta_{1},0))+\sum_{i=1}^{l}\bar{\chi}_{\mathcal{B}_{p}}((\beta_{1}+\cdots \beta_{i-1},2),(\beta_{i},0))}\bigg)\Bigg].\notag\\
\end{align}
By Definition \ref{Bp-ss-defn} and equations \eqref{sub2} and \eqref{total} we obtain:
\begin{align}\label{total2}
&
\textbf{B}^{ss}_{p}(X,\beta,2,\tilde{\tau})=
\sum_{\beta_{k}+\beta_{l}=\beta}^{o}\Bigg[(-1)^{d_{1}+1}\cdot \frac{1}{2}\cdot\displaystyle{\int}_{(E_{1},E_{3})\in\mathfrak{S}^{(\beta_{k},1)}\times \mathfrak{S}^{(\beta_{l},1)}\backslash \Delta}\chi\bigg(\mathbb{P}(\operatorname{Ext}^{1}(E_{3},E_{1}))\bigg)d\chi\Bigg]\notag\\
&
+\frac{3}{2}\cdot(-1)^{d_{2}+1}\cdot\displaystyle{\int}_{E_{1}\in \mathcal{M}^{(\frac{\beta}{2},1)}(\tilde{\tau})}\chi\bigg(\mathbb{P}(\operatorname{Ext}^{1}(E_{1},E_{1}))\bigg) d\chi+\frac{1}{4}\cdot(-1)^{d_{4}}\cdot\notag\\
&
\sum_{1\leq l,\beta_{1}+\cdots+\beta_{l}=\frac{\beta}{2}}\Bigg[\frac{(1)}{l!}\cdot\bar{\chi}_{\mathcal{B}_{p}}((0,2),(\beta_{1},0))\cdot \prod_{i=1}^{l}\bigg(\overline{DT}^{\beta_{i}}(\tau)\cdot \bar{\chi}_{\mathcal{B}_{p}}((\beta_{1}+\cdots+\beta_{i-1},2),(\beta_{i},0))\notag\\
&
\,\,\,\,\,\,\,\,\,\,\,\,\,\,\,\,\,\,\,\,\,\,\,\,\,\,\,\,\,\,\,\,\,\,\,\,\,\,\,\,\,\,\,\,\,\,\,\,\,\,\,\,\,\,\,\,\,\,\,\,\,\,\,\,\,\,\,\,\,\,\,\,\,\,\,\,\,\,\,\,\,\,\,\,\,\,\,\,\,\,\,\,\,\,\,\,\,\,\,\,\,\,\,\,\,\,\,\,\,\,\cdot (-1)^{\bar{\chi}_{\mathcal{B}_{p}}((0,2),(\beta_{1},0))+\sum_{i=1}^{l}\bar{\chi}_{\mathcal{B}_{p}}((\beta_{1}+\cdots \beta_{i-1},2),(\beta_{i},0))}\bigg)\Bigg],\notag\\
\end{align}
where the first and second summands on the right hand side of \eqref{total2} can be calculated easily based on the geometry given, and using Grothendieck-Riemann-Roch along the fibers. 
\begin{remark}
It is easy to see that by substituting $([\mathbb{P}^{1}],2)$ in Equation \eqref{total2} one immediately obtains the result obtained in \eqref{first-answer}. As another example one may try to compute the right hand side of Equation \eqref{total2} by substituting $(\beta,2)=(2[\mathbb{P}^{1}],2)$, for instance when the base variety $X$ is given by the total space of $\mathcal{O}_{\mathbb{P}^{1}}^{\oplus 2}(-1)\rightarrow \mathbb{P}^{1}$. Assume $\chi(F)=k$. Now if $k=2q+1$, then semistability implies stability and if $k=2q$ then $F$ is given as a strictly semistable sheaf. Based on computations in  \cite{a52} and \cite{a53} for $k=2q+1$ there exist no stable sheaves with $\beta=2[\mathbb{P}^{1}]$. Now assume $k=2q$. In this case the semistable sheaves are given by $F=\mathcal{O}_{\mathbb{P}^{1}}(q-1)\oplus \mathcal{O}_{\mathbb{P}^{1}}(q-1)$\footnote{Note that by obvious reasons, a sheaf $F=\mathcal{O}_{\mathbb{P}^{1}}(a)\oplus \mathcal{O}_{\mathbb{P}^{1}}(b)$ where $a\neq b$ is unstable.} Therefore by substituting $(2[\mathbb{P}^{1}],2)$ in \eqref{total2} we see that:
\begin{equation}
\textbf{B}^{ss}_{p}(X,2[\mathbb{P}^{1}],2,\tilde{\tau})=-\frac{1}{2}(n+q)^{2}-(n+q),
\end{equation}  
The computations in this case involve arguments similar to the ones given in Section \ref{compute-example} hence we have omitted the explicit calculations here.
\end{remark}
\begin{remark}
In \cite{a37} Toda has exploited similar stratification strategy for the moduli stack of objects composed of a zero dimensional sheaf $F$ given as the quotient $\mathcal{O}_{X}^{2}\twoheadrightarrow F$ where the objects are assumed to be semistable with respect to a stability condition in the sense of Bridgeland \cite{a73}. Moreover the author has given an evidence of the integrality conjecture \cite{a42} (conjecture 6) for the corresponding partition functions associated to the moduli stack of these objects. The identities in \cite[Equation 100, 99 and 88]{a37} play an important role in the auhtor's proof of the ``\textit{integrality conjecture}". Our stratification strategy and calculation of the invariants share many similarities with those in \cite{a37}. Equations \eqref{psi-e1} and \eqref{psi-e2-2}  are analog of equations 88 and 99 in \cite{a37} respectively, and we suspect that in some cases they can be used to prove the integrality property of the corresponding partition functions for the invariants of the moduli stack of objects in $\mathcal{B}_{p}$. Note that the stability condition used in our approach is the weak stability condition (Definition \ref{weak}) used by Joyce-Song in \cite{a30} which, despite having a much simpler definition than the Bridgeland stability conditions used by Toda in \cite{a37}, shares many strong properties with them. 
\end{remark}
\appendix

\section{Locally closedness of the strata}\label{appA}
In this appendix we discuss the reason behind the assumption on locally closedness property of the strata obtained on the right hand side of Equation \eqref{st1-4}. As was mentioned in Theorem \ref{theorem81}, the moduli stack of semistable objects in $\mathcal{B}_{p}$ is obtained as a two-fold (stacky) quotient (by $GL_{r}(\mathbb{C})\times GL(V)$) of the bundle $\mathcal{P}^{\oplus r}$, parameterizing the maps $\mathcal{O}_{X}^{\oplus r}(-n)\to F$, defined over the Quot scheme, parameterizing $F$. Our strategy in this section is to investigate the locally closedness property of the strata appearing on the right hand side of \eqref{st1-4}, via analyzing the stabilizer groups of objects in their corresponding parameterizing schemes, given as subschemes of $\mathcal{P}^{\oplus r}$; 
\begin{lemma}
Fix $(\beta_{k},\beta_{l})$ such that $\beta_{k}+\beta_{l}=\beta$, then the $(k,l)$'th summand on the right hand side of Equation  \eqref{sym-kl34} is composed of locally closed strata. 
\end{lemma}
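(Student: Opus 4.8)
The plan is to realize each of the four pieces of the $(k,l)$-th summand of \eqref{sym-kl34} as an intersection of three loci, each of which I will argue is locally closed: the locus cut out by the numerical type $(\beta_k,\beta_l)$ of the two stable rank-one factors, the locus distinguishing split from non-split extensions \eqref{e1e3}, and the locus on which the dimension of the automorphism group is constant. I would work throughout on the scheme $\mathfrak{S}^{(\beta,2)}_{st-ss}(\tilde\tau)\cong\mathfrak{S}^{(\beta,2)}_{ss}(\tilde\tau)$ of Definition \ref{scheme-1} and Remark \ref{equal-schemes}, on which $G=\operatorname{GL}_{2}(\mathbb{C})\times\operatorname{GL}(V)$ acts with orbits the isomorphism classes of objects $E_{2}$, so that for a closed point $e$ representing $E_{2}$ the dimension of $\operatorname{Stab}_{G}(e)$ records the dimension of $\operatorname{Aut}(E_{2})$. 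By the automorphism-group computations recorded just before Lemma \ref{nonsp-ext} these dimensions are $1$ on $\mathfrak{S}^{(\beta_{k},\beta_{l},2)}_{1}(\tilde\tau)$ (automorphisms $\mathbb{G}_{m}$), $2$ on both $\mathfrak{S}^{(\beta_{k},\beta_{l},2)}_{2}(\tilde\tau)$ (automorphisms $\mathbb{A}^{1}\rtimes\mathbb{G}_{m}$) and $\mathfrak{S}^{(\beta_{k},\beta_{l},2)}_{O-\Delta}(\tilde\tau)$ (automorphisms $\mathbb{G}_{m}^{2}$), and $3$ on $\mathfrak{D}^{(\beta_{k},\beta_{l},2)}(\tilde\tau)$ (automorphisms $\mathbb{A}^{1}\rtimes\mathbb{G}_{m}^{2}$). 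Since $\dim\operatorname{Stab}_{G}(e)$ equals the dimension of the kernel of the infinitesimal action map $\mathfrak{g}\otimes\mathcal{O}\to T\mathfrak{S}^{(\beta,2)}_{st-ss}(\tilde\tau)$, exactly in the manner of the map \eqref{lie-tangent} of Section \ref{compute-example}, it is upper-semicontinuous, and the usual argument yields a locally closed stratification by the value of this dimension; in particular the locus $\{\dim\geq 3\}=\mathfrak{D}$ is closed and $\{\dim=1\}$ is open.

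Next I would fix $(\beta_{k},\beta_{l})$ with $\beta_{k}+\beta_{l}=\beta$ and isolate the locus of $E_{2}$ admitting a $\tilde\tau$-stable subobject of class $(\beta_{k},1)$ with $\tilde\tau$-stable quotient of class $(\beta_{l},1)$. Boundedness shows that the relative scheme of sub-objects of numerical type $(\beta_{k},1)$ is proper over the base, so the locus of $E_{2}$ carrying such a sub-object has closed image in $\mathfrak{S}^{(\beta,2)}_{st-ss}(\tilde\tau)$, while stability of the sub-object and of the quotient is an open condition; their intersection is therefore locally closed. By Assumption \ref{big-assumption} the rank-one factors are automatically Giseker stable, and by Lemmas \ref{non-isom} and \ref{isom-nonisom} the unordered pair $(\beta_{k},\beta_{l})$ together with the isomorphism type of the two factors is an invariant of $E_{2}$, so this locally closed set is precisely the $(k,l)$-th summand of \eqref{sym-kl34}. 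To split it into its four pieces I would intersect with the split/non-split dichotomy and with the automorphism-dimension strata of the first step. Over the locus of a fixed decomposition type the relative $\operatorname{Hom}$ sheaf has constant rank by Lemma \ref{hom-eval}, hence cohomology and base change makes the relative $\operatorname{Ext}^{1}$ sheaf locally free, and the extension class of \eqref{e1e3} becomes a section of the associated vector bundle whose vanishing locus is the split stratum; thus the split pieces $\mathfrak{S}^{(\beta_{k},\beta_{l},2)}_{O-\Delta}(\tilde\tau)$ and $\mathfrak{D}^{(\beta_{k},\beta_{l},2)}(\tilde\tau)$ are closed and the non-split pieces $\mathfrak{S}^{(\beta_{k},\beta_{l},2)}_{1}(\tilde\tau)$, $\mathfrak{S}^{(\beta_{k},\beta_{l},2)}_{2}(\tilde\tau)$ are open within the summand. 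The remaining separations — $\mathfrak{S}_{2}$ from $\mathfrak{S}_{O-\Delta}$ (both of automorphism dimension $2$) by the split/non-split condition, and $\mathfrak{S}_{1}$ from $\mathfrak{D}$ by automorphism-dimension semicontinuity — then exhibit all four pieces as intersections of locally closed sets.

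I expect the main obstacle to be the rigorous treatment of the split/non-split separation in families, that is, proving that decomposability $E_{2}\cong E_{1}\oplus E_{3}$ is a closed condition. This hinges on the relative $\operatorname{Hom}$ and $\operatorname{Ext}^{1}$ sheaves over each fixed-type stratum being locally free, so that the extension class is genuinely a section of a vector bundle: the constancy of $\dim\operatorname{Hom}(E_{3},E_{1})$ furnished by Lemma \ref{hom-eval} under Assumption \ref{big-assumption} is exactly what drives cohomology and base change here, while simplicity of the stable factors keeps the relevant Euler characteristic, and hence $\dim\operatorname{Ext}^{1}$, constant along the stratum. Once the stratification of $\mathfrak{S}^{(\beta,2)}_{st-ss}(\tilde\tau)$ is in place, passing to the quotients by $\operatorname{GL}(V)$ and then by $\operatorname{GL}_{2}(\mathbb{C})$ as in Corollaries \ref{Bp-R-DM} and \ref{important-quotient} preserves local closedness, since the stratification is $G$-invariant by construction. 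Finally, the ``moreover'' clause is immediate: by Remark \ref{equal-schemes} every object of type $(\beta,2)$ is strictly $\tilde\tau$-semistable, so $\mathfrak{S}^{(\beta,2)}_{s}(\tilde\tau)=\varnothing$, which is vacuously locally closed.
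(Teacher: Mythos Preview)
Your proposal is correct and shares the paper's starting point: both of you use upper-semicontinuity of the stabilizer dimension, read off from the kernel of the infinitesimal action map $\mathfrak{g}\otimes\mathcal{O}\to T\mathfrak{S}^{(\beta,2)}_{st-ss}(\tilde\tau)$, to obtain locally closed strata $R_1$, $R_2$, $R_3$ on which the automorphism group has dimension $1$, $2$, $3$ respectively, and both of you identify $\mathfrak{S}_1=R_1$ and $\mathfrak{D}=R_3$ immediately.

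Where you diverge from the paper is in separating the two dimension-$2$ pieces $\mathfrak{S}_{2}$ and $\mathfrak{S}_{O-\Delta}$ inside $R_2$. The paper argues by direct closure analysis: it claims that the closure of $\mathfrak{S}_{O-\Delta}$ is contained in $\mathfrak{S}_{O-\Delta}\cup R_3$ (a limit of split objects with $E_1\ncong E_3$ is still split, and if the factors become isomorphic the automorphism dimension jumps to $3$), and similarly the closure of $\mathfrak{S}_2$ is contained in $\mathfrak{S}_2\cup R_3$; since neither closure meets the other piece inside $R_2$, both are locally closed. In a separate remark the paper offers an alternative via SGA3, Exp.\ X, Theorem 8.8: the locus in $R_2$ where the stabilizer is a torus is open, and this is exactly $\mathfrak{S}_{O-\Delta}$, so its complement $\mathfrak{S}_2$ is closed in $R_2$. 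Your route instead realizes the split locus as the vanishing of the extension-class section of a relative $\operatorname{Ext}^1$ bundle, using Lemma \ref{hom-eval} to force constant rank of relative $\operatorname{Hom}$ and hence local freeness by cohomology and base change. Your argument has the merit of making ``splitness is closed'' a genuine scheme-theoretic statement rather than a heuristic about limits, at the cost of having to say carefully on which parameter space the section lives; the paper's SGA3 argument, by contrast, avoids any mention of extensions and works purely from the group-scheme structure of the stabilizers. You also supply two points the paper leaves implicit or unaddressed: the local closedness of the fixed-numerical-type locus (via properness of the relative sub-object scheme and openness of stability), and the ``moreover'' clause, which you correctly dispatch by observing from Remark \ref{equal-schemes} that $\mathfrak{S}^{(\beta,2)}_{s}(\tilde\tau)=\varnothing$.
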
 
\begin{proof} By Theorem \ref{theorem81} there exists a projection map$$\mathcal{P}^{\oplus2} \xrightarrow{\pi} \mathfrak{M}^{(\beta,2)}_{\mathcal{B}_{p}}.$$Let us use the notation for the underlying parametrizing scheme $\mathfrak{S}^{(\beta,2)}_{st-ss}(\tilde{\tau}):=\pi^{-1} \mathfrak{M}^{(\beta,2)}_{st-ss}(\tilde{\tau})\subset \mathcal{P}^{\oplus2} $. By construction in Section \ref{scheme-beta} and Remark \ref{equal-schemes} there exists an action of $GL_{2}(\mathbb{C})\times GL(V)$ on $\mathfrak{S}^{(\beta,2)}_{st-ss}(\tilde{\tau})$. This action induces an action of the corresponding Lie algebra on the tangent space of $\mathfrak{S}^{(\beta,2)}_{st-ss}(\tilde{\tau})$ given by the map:
\begin{align}\label{lie-tangent}
\mathcal{O}_{\mathfrak{S}^{(\beta,2)}_{st-ss}(\tilde{\tau})}\otimes \left(\mathfrak{g}l_{2}(\mathbb{C})\times \mathfrak{g}l(V)\right)\rightarrow T_{\mathfrak{S}^{(\beta,2)}_{st-ss}(\tilde{\tau})},
\end{align}
The dimension of the automorphism group of objects representing the elements of $\mathfrak{S}^{(\beta,2)}_{st-ss}(\tilde{\tau})$ is given by the dimension of their stabilizer group (in $GL_{2}(\mathbb{C})\times GL(V)$), which is given by the dimension of the kernel of the map in \eqref{lie-tangent}, which itself is an upper-semicontinious function. Now let us denote the $(k,l)$'th summand on the right hand side of \eqref{sym-kl34} by:
\begin{equation}\label{kl-sum}
R=\mathfrak{M}^{(\beta_{k},\beta_{l},2)}_{1}(\tilde{\tau})\bigsqcup\mathfrak{M}^{(\beta_{k},\beta_{l},2)}_{2}(\tilde{\tau})\bigsqcup\mathfrak{M}^{(\beta_{k},\beta_{l},2)}_{O-\Delta}(\tilde{\tau})\bigsqcup \mathfrak{D}^{(\beta_{k},\beta_{l},2)}(\tilde{\tau}).
\end{equation}
Then take the pre-image of $R$ under $\pi$ and denote
\begin{equation}\label{pre-image}
\pi^{-1}R:=\mathfrak{S}^{(\beta_{k},\beta_{l},2)}_{1}(\tilde{\tau})\bigsqcup\mathfrak{S}^{(\beta_{k},\beta_{l},2)}_{2}(\tilde{\tau})\bigsqcup\mathfrak{S}^{(\beta_{k},\beta_{l},2)}_{O-\Delta}(\tilde{\tau})\bigsqcup \mathfrak{D}'^{(\beta_{k},\beta_{l},2)}(\tilde{\tau})\subset\mathfrak{S}^{(\beta,2)}_{st-ss}(\tilde{\tau}).
\end{equation}
and observe that the dimension of the stabilizer group of points in each summand of of $\pi^{-1}R$ remains constant as we vary over points inside that stratum.

Now we would like to re-package the data on the right hand side of \eqref{pre-image} and write $\pi^{-1}R$ as sum of three summands, based on the dimension of stabilizer groups involved; First note that the stabilizer group of points in $\mathfrak{S}^{(\beta_{k},\beta_{l},2)}_{1}(\tilde{\tau})$ is given by the automorphism group of their corresponding objects (i.e given by extensions of stable non-isomorphic objects with classes $(\beta_{k},1)$ and $(\beta_{l},1)$) which is given by $\mathbb{G}_{m}$ (Lemma \ref{nonsp-ext}). So lets us denote this stratum (with one dimensional stabilizer group) as $R'_{1}=\mathfrak{S}^{(\beta_{k},\beta_{l},2)}_{1}(\tilde{\tau}).$ Now for the second and third summands, notice that the stabilizer groups of elements in $\mathfrak{S}^{(\beta_{k},\beta_{l},2)}_{2}(\tilde{\tau})$ and $\mathfrak{S}^{(\beta_{k},\beta_{l},2)}_{O-\Delta}(\tilde{\tau})$ are given by $\mathbb{A}_{1}\rtimes \mathbb{G}_{m}$ and $\mathbb{G}_{m}^{2}$ respectively. Therefore, let us denote the union of tho two strata (both of which have 2 dimensional stabilizer groups) as $$R'_{2}=\mathfrak{S}^{(\beta_{k},\beta_{l},2)}_{2}(\tilde{\tau})\bigsqcup \mathfrak{S}^{(\beta_{k},\beta_{l},2)}_{O-\Delta}(\tilde{\tau}).$$Finally by construction, the stabilizer group of elements in $\mathfrak{D}'^{(\beta_{k},\beta_{l},2)}(\tilde{\tau})$ is given by $\mathbb{A}_{1}\rtimes \mathbb{G}_{m}^{2}$ and so let $$R'_{3}=\mathfrak{D}'^{(\beta_{k},\beta_{l},2)}(\tilde{\tau}).$$Hence, the right hand side of \eqref{kl-sum} is written as $$\pi^{-1}R=R'_{1}\bigsqcup R'_{2}\bigsqcup R'_{3},$$ where the points over $R'_{1}$, $R'_{2}$ and $R'_{3}$ have 1-dimensional, 2-dimensional and 3-dimensional stabilizer groups respectively. It is true that $R'_{1}$, $R'_{2}$ and $R'_{3}$ are locally closed in $\mathfrak{S}^{(\beta,2)}_{ss}$ (defined in Lemma \ref{non-isom}).  

Now let us discuss the locally closedness property of these strata; First, consider $R'_{2}$; Let $\overline{\mathfrak{S}^{(\beta_{k},\beta_{l},2)}_{O-\Delta}(\tilde{\tau})}$ denote the closure of $\mathfrak{S}^{(\beta_{k},\beta_{l},2)}_{O-\Delta}(\tilde{\tau})$ in $\mathfrak{S}^{(\beta,2)}_{ss}$. Recall that by definition, the objects parametrized by elements of $\mathfrak{S}^{(\beta_{k},\beta_{l},2)}_{O-\Delta}(\tilde{\tau})$ are given by split extensions, i.e $E_{2}\cong E_{1}\oplus E_{3}$ where $E_{1}\ncong E_{3}$. The automorphism group of these objects is given by $\mathbb{G}_{m}^{2}$. Taking the closure of $\mathfrak{S}^{(\beta_{k},\beta_{l},2)}_{O-\Delta}(\tilde{\tau})$, we immediately see that the objects parametrized by $\overline{\mathfrak{S}^{(\beta_{k},\beta_{l},2)}_{O-\Delta}(\tilde{\tau})}$ are given by all split extensions of $E_{1}$ by $E_{3}$, i.e one has:
\begin{equation}
\overline{\mathfrak{S}^{(\beta_{k},\beta_{l},2)}_{O-\Delta}(\tilde{\tau})}\subset \mathfrak{S}^{(\beta_{k},\beta_{l},2)}_{O-\Delta}(\tilde{\tau})\bigcup R'_{3}.
\end{equation}    
Now take the closure of $\mathfrak{S}^{(\beta_{k},\beta_{l},2)}_{2}(\tilde{\tau})$ and obtain $\overline{\mathfrak{S}^{(\beta_{k},\beta_{l},2)}_{2}(\tilde{\tau})}$. By definition, the objects representing elements of $\mathfrak{S}^{(\beta_{k},\beta_{l},2)}_{2}(\tilde{\tau})$ are given by non-split exact sequences $$0\rightarrow E_{1}\rightarrow E_{2}\rightarrow E_{3}\rightarrow 0,$$where $E_{1}\cong E_{3}$. So it is seen that taking the closure, the objects representing the elements in the boundary of $\overline{\mathfrak{S}^{(\beta_{k},\beta_{l},2)}_{2}(\tilde{\tau})}$ are given by $E_{2}\cong E_{1}\oplus E_{3}$ where $E_{1}\cong E_{3}$, i.e:
\begin{equation}
\overline{\mathfrak{S}^{(\beta_{k},\beta_{l},2)}_{2}(\tilde{\tau})}\subset \mathfrak{S}^{(\beta_{k},\beta_{l},2)}_{2}(\tilde{\tau})\bigcup R'_{3}
\end{equation}
Since $R'_{3}\bigcap \mathfrak{S}^{(\beta_{k},\beta_{l},2)}_{O-\Delta}(\tilde{\tau})=\varnothing$ and $R'_{3}\bigcap \mathfrak{S}^{(\beta_{k},\beta_{l},2)}_{2}(\tilde{\tau})=\varnothing$, then it is seen that $\overline{\mathfrak{S}^{(\beta_{k},\beta_{l},2)}_{2}(\tilde{\tau})}$ and $\overline{\mathfrak{S}^{(\beta_{k},\beta_{l},2)}_{O-\Delta}(\tilde{\tau})}$ have empty intersections in $R'_{2}$ but non-empty intersections in $R'_{3}$ , in other words their boundary is given by a subset of $R'_{3}$ which itself is locally closed in $\mathfrak{S}^{(\beta,2)}_{ss}$. Hence $\mathfrak{S}^{(\beta_{k},\beta_{l},2)}_{O-\Delta}(\tilde{\tau})$ and $\mathfrak{S}^{(\beta_{k},\beta_{l},2)}_{2}(\tilde{\tau})$ are locally closed in $\mathfrak{S}^{(\beta,2)}_{ss}$.
\end{proof}
\begin{remark}
\emph{Here, for completeness, we discuss a second approach to the proof of the fact that $\mathfrak{S}^{(\beta_{k},\beta_{l},2)}_{2}(\tilde{\tau})$ and $\mathfrak{S}^{(\beta_{k},\beta_{l},2)}_{O-\Delta}(\tilde{\tau})$ are locally closed in $R'_{2}$}.
\end{remark}
\begin{proof}: First we state a theorem from SGA3:
\begin{theorem}(SGA3 Exp, X. Theorem 8.8)\label{SGA}
Let $\mathcal{T}$ be a commutative flat group scheme, separated of finite type over a noetherian scheme $S$ , with connected affine fibers. Let $s\in S$ and $\overline{s}$ a geometric point over $s$ and suppose: 
\begin{enumerate}
\item the reduced subscheme $(\mathcal{T}_{\overline{s}})_{red}$ of the geometric fiber $\mathcal{T}_{\overline{s}}$ is a torus and; 
\item there exists a generization $t$ of $s$ (i.e. the closure of $\{t\}$ contains $s$ ) such that $\mathcal{T}_{t}$ is smooth over $k(t) $, the residue field of $t$;
\end{enumerate} 
Then there exists an open neighborhood $\mathcal{U}$ of $s$ such that $\mathcal{T}\mid_{\mathcal{U}}$ is a torus over $\mathcal{U}$. 
\end{theorem}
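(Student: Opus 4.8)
The plan is to reduce to a complete local base and then to recognize $\mathcal{T}$ as a group of multiplicative type near $s$, so that its smoothness (hence its being a torus) follows from the generization hypothesis together with the rigidity of such groups. First I would localize and complete: the conclusion is local on $S$, and the existence of an open torus-neighborhood, as well as flatness and finite type, is insensitive to the faithfully flat base changes $\operatorname{Spec}(\mathcal{O}_{S,s})\to S$, $\operatorname{Spec}(\widehat{\mathcal{O}_{S,s}})\to\operatorname{Spec}(\mathcal{O}_{S,s})$, and a final extension making the residue field $k=\kappa(s)$ algebraically closed. After this $S=\operatorname{Spec}(A)$ is complete local noetherian with closed point $s$, $\overline{s}=s$, and the standing hypotheses read: all fibres of $\mathcal{T}$ are connected, affine, and of a common dimension $d$ (the common value being forced by flatness together with Chevalley's upper semicontinuity of fibre dimension), the generization fibre $\mathcal{T}_{t}$ is smooth, and the reduced special fibre $(\mathcal{T}_{s})_{red}$ is a torus $T_{0}$, necessarily of dimension $d$.

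Second, I would identify the generization fibre as an honest torus. Being smooth, connected, commutative, and affine over a field, $\mathcal{T}_{t}$ decomposes geometrically as a product of a torus and a unipotent group. The dimension of the maximal subtorus of a fibre is lower semicontinuous on $S$ (equivalently, the unipotent rank can only grow under specialization), so the toric rank at $t$ is at least the toric rank $d$ of the reduced special fibre $T_{0}$; since it is also bounded by $\dim\mathcal{T}_{t}=d$, it equals $d$ and the unipotent part of $\mathcal{T}_{t}$ is trivial. Thus $\mathcal{T}_{t}$ is a $d$-dimensional torus. Here the interplay of the three hypotheses is essential: flatness keeps the fibre dimension (and, in the infinitesimal directions, the length) constant, connectedness forbids the étale or additive-infinitesimal degenerations (such as an $\alpha_{p}$- or $\mu_{p}$-factor, whose generic fibre would be disconnected or of the wrong length), and smoothness at $t$ provides the toric generic fibre to which the special fibre must be compared.

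Third, and this is the technical heart, I would recognize $\mathcal{T}$ itself as a group of multiplicative type over a neighborhood of $s$ and then conclude. The engine is Grothendieck's rigidity theory for groups of multiplicative type (SGA3, Exp. IX--X): such groups are rigid, their infinitesimal deformations as group schemes are unobstructed and uniquely determined (the relevant tangent and obstruction spaces vanish because, $\mathcal{T}$ being commutative, the adjoint action on the Lie algebra is trivial and the Hochschild cohomology of a diagonalizable group vanishes in the pertinent degrees), and the étale sheaf of characters is locally constant. Applying the recognition criterion to the flat affine family $\mathcal{T}$ with multiplicative-type special-fibre data, I would produce the unique multiplicative-type $A$-group $\mathcal{T}'$ lifting the special fibre and identify $\mathcal{T}$ with $\mathcal{T}'$ over $\widehat{\mathcal{O}_{S,s}}$. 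Local constancy of the character group then forces $\mathcal{T}_{s}$ to carry the same character lattice as the generic fibre, which is torsion-free by the smoothness of $\mathcal{T}_{t}$; hence $\mathcal{T}_{s}$ has no infinitesimal part and is itself a torus, so every fibre in a neighborhood is a torus, and the multiplicative-type structure globalizes, via faithfully flat descent from $\widehat{\mathcal{O}_{S,s}}$, to a torus structure on $\mathcal{T}|_{\mathcal{U}}$ for a suitable open $\mathcal{U}\ni s$.

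I expect the main obstacle to be exactly this recognition-and-rigidity step: verifying that the \emph{abstract} flat group scheme $\mathcal{T}$ is of multiplicative type, not merely that its fibres are, and that its character sheaf is locally constant with torsion-free stalks, requires the full representability and rigidity apparatus of SGA3, including careful control of $\underline{\operatorname{Hom}}$ between multiplicative-type groups and of the passage from the complete local ring back to a genuine open neighborhood. The first two steps are comparatively formal, but they are precisely what makes the three hypotheses (flatness, connected affine fibres, and a smooth generization) dovetail so that the non-reduced degenerate fibres allowed in positive characteristic are excluded.
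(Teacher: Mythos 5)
A point of order first: the paper does not prove this statement at all --- it is quoted verbatim from SGA3 (Expos\'e X, Th\'eor\`eme 8.8) and used purely as a black box, in the remark following it, to show that the locus $\mathfrak{S}^{(\beta_{k},\beta_{l},2)}_{O-\Delta}(\tilde{\tau})$ of points with torus stabilizers is open in $R_{2}$. So there is no ``paper's own proof'' to compare against; the only meaningful comparison is with the original SGA3 argument, and at the level of strategy your plan (localize and complete, then exploit rigidity and the recognition theory of groups of multiplicative type) is indeed the standard route.

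As a proof, however, your proposal has two concrete gaps. The semicontinuity invoked in your second step --- that the toric rank can only drop under specialization, so the rank at $t$ is at least $d$ --- is a theorem about \emph{smooth} affine group schemes (SGA3, Expos\'e XII); your $\mathcal{T}$ is only flat, and its special fibre may be non-reduced, which is exactly the degenerate situation the theorem is designed to control. For merely flat families with connected fibres, the impossibility of the toric rank jumping \emph{up} at $s$ is part of the content of Theorem 8.8 itself: if a flat commutative group scheme over a DVR with connected fibres, generic fibre $\mathbb{G}_{a}$ and reduced special fibre $\mathbb{G}_{m}$ existed, the theorem would be false (any open neighborhood of $s$ contains $t$). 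Invoking the semicontinuity as an input is therefore close to circular. The repair is to reverse your steps two and three: first lift $T_{0}=(\mathcal{T}_{s})_{red}$ by rigidity to a multiplicative-type group $\mathcal{T}'$ over the complete local ring and produce a homomorphism $\mathcal{T}'\rightarrow\mathcal{T}$; only then read off at $t$ that the smooth connected $d$-dimensional group $\mathcal{T}_{t}$ contains a $d$-dimensional torus, hence is one.

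The second, more serious gap is the recognition step, which you flag but do not close. Rigidity lets you lift $T_{0}$ compatibly over the thickenings $\operatorname{Spec}(A/\mathfrak{m}^{n})$ and algebraize (the groups being affine), but you must still (a) construct the map $\mathcal{T}'\rightarrow\mathcal{T}$ extending $T_{0}\hookrightarrow\mathcal{T}_{s}$, where the relevant infinitesimal statement is the unique lifting of multiplicative-type \emph{subgroups} of $\mathcal{T}$ (SGA3, Expos\'e IX), not a smoothness argument, since $\mathcal{T}$ is not smooth; and (b) kill the possible infinitesimal thickening of the special fibre: one gets $\mathcal{T}'_{s}=(\mathcal{T}_{s})_{red}$, but a priori $\mathcal{T}_{s}\neq\mathcal{T}'_{s}$, and proving that the quotient $\mathcal{T}/\mathcal{T}'$ --- flat, quasi-finite near $s$, with trivial fibre at $t$ and connected fibres --- is trivial is precisely where flatness and connectedness must be used quantitatively. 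Your sketch defers both points to ``the full representability and rigidity apparatus of SGA3,'' which is where the actual proof lives; as written, the proposal is a sound plan of attack consistent with the original argument, but not yet a proof.
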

Note that , by construction, $R'_{2}$ is given as a stratum of $\mathfrak{S}^{(\beta,2)}_{ss}$ over which the stabilizer groups of associated points are two dimensional. As we vary over $R'_{2}$, the stabilizer groups of points in $R'_{2}$ make a group scheme $\mathcal{G}$ over $\mathbb{C}$. According to the above theorem, for every point $p\in R'_{2}$ such that $(\mathcal{G}_{p})_{red}$ is given by the two dimensional torus ($\mathbb{G}_{m}^{2}$), there exists an open neighborhood $\mathcal{U}$ such that $\mathcal{G}\mid_{\mathcal{U}}$ is given by $\mathbb{G}_{m}^{2}$ over $\mathcal{U}$. Let $\mathcal{K}$ denote the union of all such $\mathcal{U}$. It is easily seen that every geometric point $k\in \mathcal{K}$ is given as a two dimensional torus $\mathbb{G}_{m}^{2}$ which corresponds to the stabilizer group of a point $p\in \mathfrak{S}^{(\beta_{k},\beta_{l},2)}_{O-\Delta}(\tilde{\tau})$ and there exists a bijective correspondence between such $k$ and $p$. Hence according to  Theorem \ref{SGA}, the locus of points $p\in R'_{2}$ with torus stabilizers, i.e $\mathfrak{S}^{(\beta_{k},\beta_{l},2)}_{O-\Delta}(\tilde{\tau})$ is open in $R'_{2}$. Since $R'_{2}$ is locally closed in $\mathfrak{S}^{(\beta,2)}_{ss}$ then $\mathfrak{S}^{(\beta_{k},\beta_{l},2)}_{O-\Delta}(\tilde{\tau})$ is locally closed in $\mathfrak{S}^{(\beta,2)}_{ss}$. On the other hand the complement  $(\mathfrak{S}^{(\beta_{k},\beta_{l},2)}_{O-\Delta}(\tilde{\tau}))^{c}$ is closed in $R'_{2}$. Since $\mathfrak{S}^{(\beta_{k},\beta_{l},2)}_{O-\Delta}(\tilde{\tau})\bigcap \mathfrak{S}^{(\beta_{k},\beta_{l},2)}_{2}(\tilde{\tau})=\varnothing$ then $\mathfrak{S}^{(\beta_{k},\beta_{l},2)}_{2}(\tilde{\tau})\subset (\mathfrak{S}^{(\beta_{k},\beta_{l},2)}_{O-\Delta}(\tilde{\tau}))^{c}$ which is closed in $R'_{2}$, hence $\mathfrak{S}^{(\beta_{k},\beta_{l},2)}_{2}(\tilde{\tau})$ is locally closed in $\mathfrak{S}^{(\beta,2)}_{ss}$.
\end{proof}
Now consider the $(k,l)$'th summand on the right hand side of \eqref{sym-kl34}. The action of $GL(V)$ on $ \mathfrak{S}^{(\beta,2)}_{ss}$ induces an action on each stratum. Take the quotient of each stratum by $GL(V)$ and obtain locally closed quotient stacks (disjoint from one another) as follows: 
\begin{defn}\label{st-ss-components}
Define
\begin{enumerate}
 \item $ \mathcal{M}^{(\beta_{k},\beta_{l},2)}_{1}(\tilde{\tau})=\bigg[\frac{ \mathfrak{S}^{(\beta_{k},\beta_{l},2)}_{1}(\tilde{\tau})}{GL(V)}\bigg]$.
 \item $ \mathcal{M}^{(\beta_{k},\beta_{l},2)}_{2}(\tilde{\tau})=\bigg[\frac{ \mathfrak{S}^{(\beta_{k},\beta_{l},2)}_{2}(\tilde{\tau})}{GL(V)}\bigg]$.
 \item $ \mathcal{M}^{(\beta_{k},\beta_{l},2)}_{O-\Delta}(\tilde{\tau})=\bigg[\frac{ \mathfrak{S}^{(\beta_{k},\beta_{l},2)}_{O-\Delta}(\tilde{\tau})}{GL(V)}\bigg]$.
 \item $ \mathcal{D}^{(\beta_{k},\beta_{l},2)}(\tilde{\tau})=\bigg[\frac{ \mathfrak{D}'^{(\beta_{k},\beta_{l},2)}(\tilde{\tau})}{GL(V)}\bigg]$.
 \end{enumerate}
 \end{defn}
\begin{defn}\label{BpR-st-ss}
Define $\mathcal{M}^{(\beta,2)}_{st-ss}(\tilde{\tau})=\left[\frac{\mathfrak{S}^{(\beta,2)}_{st-ss}(\tilde{\tau})}{GL(V)}\right].$
\end{defn}
By definitions \ref{st-ss-components} and \ref{BpR-st-ss} and Equation \eqref{sym-kl34} one obtains:
\begin{align}\label{sym-kl3}
 &
 \mathcal{M}^{(\beta,2)}_{st-ss}(\tilde{\tau})=
\bigsqcup^{o}_{\beta_{k}+\beta_{l}=\beta}\bigg(  \mathcal{M}^{(\beta_{k},\beta_{l},2)}_{1}(\tilde{\tau})\bigsqcup \mathcal{M}^{(\beta_{k},\beta_{l},2)}_{2}(\tilde{\tau})\bigg)\bigsqcup
\bigsqcup^{u-o}_{\beta_{k}+\beta_{l}=\beta}\bigg( \mathcal{M}^{(\beta_{k},\beta_{l},2)}_{O-\Delta}(\tilde{\tau})\bigsqcup  \mathcal{D}^{(\beta_{k},\beta_{l},2)}(\tilde{\tau})\bigg)
 \end{align} 
  Now by our construction taking the quotient of $\mathcal{M}^{(\beta,2)}_{st-ss}(\tilde{\tau})$ one more time by the action of $GL_{2}(\mathbb{C})$, will naturally produce back the quotient stack $\mathfrak{M}^{(\beta,2)}_{\mathcal{B}_{p},ss}(\tilde{\tau})$ and the right hand side of \eqref{sym-kl34}, in other words, we can  easily see that:
 \begin{enumerate}
 \item $ \mathfrak{M}^{(\beta_{k},\beta_{l},2)}_{1}(\tilde{\tau})=\bigg[\frac{ \mathcal{M}^{(\beta_{k},\beta_{l},2)}_{1}(\tilde{\tau})}{GL_{2}(\mathbb{C})}\bigg]$.
 \item $ \mathfrak{M}^{(\beta_{k},\beta_{l},2)}_{2}(\tilde{\tau})=\bigg[\frac{ \mathcal{M}^{(\beta_{k},\beta_{l},2)}_{2}(\tilde{\tau})}{GL_{2}(\mathbb{C})}\bigg]$.
 \item $ \mathfrak{M}^{(\beta_{k},\beta_{l},2)}_{O-\Delta}(\tilde{\tau})=\bigg[\frac{ \mathcal{M}^{(\beta_{k},\beta_{l},2)}_{O-\Delta}(\tilde{\tau})}{GL_{2}(\mathbb{C})}\bigg]$.
 \item $ \mathfrak{D}^{(\beta_{k},\beta_{l},2)}(\tilde{\tau})=\bigg[\frac{ \mathcal{D}^{(\beta_{k},\beta_{l},2)}(\tilde{\tau})}{GL_{2}(\mathbb{C})}\bigg]$.
 \end{enumerate}

%\backmatter
\bibliographystyle{plain}
\bibliography{ref1}

\begin{thebibliography}{10}

\bibitem{a65}
Kai Behrend, Jim Bryan, and Bal{\'a}zs Szendr{\H{o}}i.
\newblock Motivic degree zero {D}onaldson-{T}homas invariants.
\newblock {\em Invent. Math.}, 192(1):111--160, 2013.

\bibitem{a73}
Tom Bridgeland.
\newblock Stability conditions on triangulated categories.
\newblock {\em Ann. of Math. (2)}, 166(2):317--345, 2007.

\bibitem{a24}
S.~K. Donaldson and R.~P. Thomas.
\newblock Gauge theory in higher dimensions.
\newblock In {\em The geometric universe ({O}xford, 1996)}, pages 31--47.
  Oxford Univ. Press, Oxford, 1998.

\bibitem{a25}
T.~Graber and R.~Pandharipande.
\newblock Localization of virtual classes.
\newblock {\em Invent. Math.}, 135(2):487--518, 1999.

\bibitem{a53}
Shinobu Hosono, Masa-Hiko Saito, and Atsushi Takahashi.
\newblock Relative {L}efschetz action and {BPS} state counting.
\newblock {\em Internat. Math. Res. Notices}, (15):783--816, 2001.

\bibitem{a8}
D.~Huybrechts and M.~Lehn.
\newblock Framed modules and their moduli.
\newblock {\em Internat. J. Math.}, 6(2):297--324, 1995.

\bibitem{a9}
Daniel Huybrechts and Manfred Lehn.
\newblock {\em The geometry of moduli spaces of sheaves}.
\newblock Cambridge Mathematical Library. Cambridge University Press,
  Cambridge, second edition, 2010.

\bibitem{a56}
Dominic Joyce.
\newblock Configurations in abelian categories. {II}. {R}ingel-{H}all algebras.
\newblock {\em Adv. Math.}, 210(2):635--706, 2007.

\bibitem{a33}
Dominic Joyce.
\newblock Motivic invariants of {A}rtin stacks and `stack functions'.
\newblock {\em Q. J. Math.}, 58(3):345--392, 2007.

\bibitem{a30}
Dominic Joyce and Yinan Song.
\newblock A theory of generalized {D}onaldson-{T}homas invariants.
\newblock {\em Mem. Amer. Math. Soc.}, 217(1020):iv+199, 2012.

\bibitem{a52}
Sheldon Katz.
\newblock Genus zero {G}opakumar-{V}afa invariants of contractible curves.
\newblock {\em J. Differential Geom.}, 79(2):185--195, 2008.

\bibitem{a42}
M.~Kontsevich and Y.~Soibelman.
\newblock Stability structures, motivic {Donaldson}-{Thomas} invariants and
  cluster transformations.
\newblock {\em arXiv:0811.2435}, 2008.

\bibitem{a17}
R.~Pandharipande and R.~P. Thomas.
\newblock Curve counting via stable pairs in the derived category.
\newblock {\em Invent. Math.}, 178(2):407--447, 2009.

\bibitem{a18}
Rahul Pandharipande and Richard~P. Thomas.
\newblock The 3-fold vertex via stable pairs.
\newblock {\em Geom. Topol.}, 13(4):1835--1876, 2009.

\bibitem{a39}
A.~Sheshmani.
\newblock Higher rank stable pairs and virtual localization.
\newblock {\em {C}omm. {A}nal. {G}eom., To appear}, 24(1).

\bibitem{a41}
A.~Sheshmani.
\newblock {Wall-crossing} and invariants of higher rank stable pairs.
\newblock {\em Illinois J. Math.}, To appear, 2016.

\bibitem{a20}
R.~P. Thomas.
\newblock A holomorphic {C}asson invariant for {C}alabi-{Y}au 3-folds, and
  bundles on {$K3$} fibrations.
\newblock {\em J. Differential Geom.}, 54(2):367--438, 2000.

\bibitem{a37}
Yukinobu Toda.
\newblock On a computation of rank two {D}onaldson-{T}homas invariants.
\newblock {\em Commun. Number Theory Phys.}, 4(1):49--102, 2010.

\end{thebibliography}

\noindent {\tt{artan@mit.edu}} \\
\noindent {\tt{artan.sheshmani@gmail.com}} \\
\noindent {\tt{Massachusetts Institute of Technology (MIT), Department of Mathematics}} \\
\noindent {\tt{Building 2, Room 2-247, 77 Massachusetts Avenue, Cambridge, MA, USA 02139-4307}} \\
\noindent {\tt{Ohio State University, Department of Mathematics}} \\
\noindent {\tt{231 West 18 Avenue, Columbus, Ohio, USA, 43210}} \\
\end{document}